\newtheorem{remark}{Remark}
\newtheorem{problem}{Problem}
\newif\iftechreport 
\newcommand{\cH}{{\mathcal{H}}}
\newcommand{\cQ}{{\mathcal{Q}}}
\newcommand{\cS}{{\mathcal{S}}}
\newcommand{\cU}{{\mathcal{U}}}
\newcommand{\RR}{\mathbb{R}}
\newcommand{\dom}{{\mathrm{dom}\,}} 
\newcommand{\prox}{\mathbf{prox}}
\newcommand{\TFBF}{T_{\mathrm{FBF}}}
\DeclareMathOperator*{\Min}{minimize}
\DeclareMathOperator*{\Fix}{Fix}
\DeclareMathOperator*{\zer}{zer}
\DeclareMathOperator*{\gra}{gra}
\newcommand\restr[2]{{
  \left.\kern-\nulldelimiterspace 
  #1 
  \vphantom{\big|} 
  \right|_{#2} 
  }}
\newcommand{\bc}{\begin{center}}
\newcommand{\ec}{\end{center}}
\newcommand{\bdm}{\begin{displaymath}}
\newcommand{\edm}{\end{displaymath}}
\newcommand{\beq}{\begin{equation}}
\newcommand{\eeq}{\end{equation}}
\newcommand{\bfl}{\begin{flushleft}}
\newcommand{\efl}{\end{flushleft}}
\newcommand{\bt}{\begin{tabbing}}
\newcommand{\et}{\end{tabbing}}
\newcommand{\beqn}{\begin{align}}
\newcommand{\eeqn}{\end{align}}
\newcommand{\beqs}{\begin{align*}} 
\newcommand{\eeqs}{\end{align*}}  
\newcommand\numberthis{\addtocounter{equation}{1}\tag{\theequation}}
\DeclarePairedDelimiter{\dotp}{\langle}{\rangle}
\newcommand{\TFB}{T_{\mathrm{FB}}}
\newcommand{\Scal}[2]{{\bigg\langle{{#1}\:\bigg |~{#2}}\bigg\rangle}}
\newcommand{\scal}[2]{{\left\langle{{#1}\mid{#2}}\right\rangle}}
\newcommand{\pscal}[2]{\langle\langle{#1}\mid{#2}\rangle\rangle} 
\newcommand{\menge}[2]{\big\{{#1}~\big |~{#2}\big\}}
\newcommand{\HH}{\ensuremath{{\mathcal H}}}
\newcommand{\Id}{\ensuremath{\operatorname{Id}}\,}
\newcommand{\RPP}{\ensuremath{\left]0,+\infty\right[}}
\newcommand{\RX}{\ensuremath{\left]-\infty,+\infty\right]}}
\newcommand{\NN}{\ensuremath{\mathbb N}}
\newcommand{\exi}{\ensuremath{\exists\,}}
\newcommand{\weakly}{\ensuremath{\:\rightharpoonup\:}}
\newcommand{\inte}{\ensuremath{\operatorname{int}}}
\newcommand{\infconv}{\ensuremath{\mbox{\small$\,\square\,$}}}
\def\cut#1{{}}
\title{Forward-Backward-Half Forward 
Algorithm 
for Solving Monotone Inclusions}
\author{Luis M. Brice\~{n}o-Arias\thanks{
              Department of Mathematics, Universidad T\'ecnica Federico Santa Mar\'ia,
              Santiago, Chile,
              \email{(luis.briceno@usm.cl)}} \and Damek Davis\thanks{
              School of Operations Research and Information Engineering, Cornell University, 
              Ithaca, NY 14850
              \email{(dsd95@cornell.edu)}}}
\begin{document}
\maketitle
\slugger{siopt}{xxxx}{xx}{x}{x--x}

\begin{abstract}
Tseng's algorithm finds a zero of the sum of a maximally monotone operator 
and a monotone continuous operator by evaluating the latter
twice per iteration.
In this paper, we 
modify Tseng's algorithm for finding a zero of the sum of three operators,
where we add a cocoercive operator to the inclusion. Since the sum
of a cocoercive and a monotone-Lipschitz operator is monotone and Lipschitz,
we could use Tseng's method for solving this problem, but implementing both operators 
twice per iteration and 
without taking into advantage the
cocoercivity property of one operator. Instead, in our approach, although the 
{continuous monotone} operator must still be evaluated twice, 
we exploit the cocoercivity of one operator by evaluating it only once per 
iteration. Moreover, when the cocoercive or {continuous-monotone} operators 
are zero
it reduces to Tseng's or forward-backward splittings, respectively, unifying in this way 
both algorithms. 
In addition, we provide a {preconditioned} version of the proposed method 
including
non self-adjoint linear operators in the computation of resolvents and the single-valued 
operators involved. This approach allows us to {also} extend previous variable 
metric 
versions of Tseng's and forward-backward methods and simplify their conditions
on the underlying metrics. We also exploit the case when non self-adjoint 
linear operators are triangular by blocks in the primal-dual product space for solving 
primal-dual composite monotone inclusions, obtaining Gauss-Seidel type algorithms
which generalize several primal-dual methods available in the literature. Finally we
explore {applications to the obstacle problem, Empirical Risk Minimization, 
distributed optimization and nonlinear programming and we illustrate the performance of 
the method via some numerical simulations.}
\end{abstract}

\begin{keywords}
Convex optimization, forward-backward splitting, monotone operator theory, sequential 
algorithms, Tseng's splitting.
\end{keywords}

\begin{AMS}
47H05, 65K05, 65K15, 90C25
 \end{AMS}

\pagestyle{myheadings}
\thispagestyle{plain}
\markboth{Luis M. Brice\~{n}o-Arias \and Damek Davis}{Forward-Backward-Half Forward 
Algorithm for Solving Monotone Inclusions}

\section{Introduction}

This paper is devoted to the numerical resolution of following problem.
\begin{problem}
	\label{prob:main}
Let $X$ be a nonempty closed convex subset of a real Hilbert space $\cH$, let $A 
: \cH \rightarrow 2^\cH$ {and $B_2 : \cH 
\rightarrow 2^{\cH}$ be maximally monotone operators, with $B_2$ single valued in $\dom 
B_2\supset\dom A\cup X$, and let $B_1 : \cH 
\rightarrow \cH$ be $\beta$-cocoercive\footnote{An operator $C : \cH \rightarrow 
\cH$ is $\beta$-cocoercive for some $\beta > 0$ provided that $\dotp{Cx - Cy, x-y} 
\geq \beta\|Cx - Cy\|^2$.}, for 
some $\beta>0$. Moreover 
assume that $B_2$ is continuous on $\dom A\cup 
X$ 
and that $A+B_2$ is 
maximally monotone. }The 
problem is to 
\begin{equation}
\label{e:main}
	\text{find }\quad x\in X\quad \text{such that }\quad 0\in Ax+B_1x+B_2x,
\end{equation}
under the assumption that the set of solutions to \eqref{e:main} is nonempty.
\end{problem}

The wide variety of applications of Problem~\ref{prob:main} involving optimization 
problems, variational inequalities, 
 partial differential equations, image processing, saddle point problems, 
game theory, among others can be explored in 
{\cite{bauschke2017convex,combetteswajs2005} }and the 
references therein.
As an important application, consider the case of composite optimization problems of 
the form
 \begin{align}\label{eq:primal_before_PD}
 	\Min_{\mathrm{x} \in \mathrm{H}}\, \mathrm{f}(\mathrm{x}) + 
 	\mathrm{g}(\mathrm{L}\mathrm{x}) + \mathrm{h}(\mathrm{x}),
 \end{align}
 where $\mathrm{H}$ and $\mathrm{G}$ are real Hilbert spaces, $\mathrm{L} : 
 \mathrm{H} \rightarrow \mathrm{G}$ is linear and bounded,  
 $\mathrm{f} :\mathrm{H} \rightarrow (-\infty, \infty]$ and $\mathrm{g} : \mathrm{G} 
 \rightarrow (-\infty, \infty]$ are 
 lower semicontinuous, convex, and proper, and  $\mathrm{h}  : \mathrm{H} \rightarrow 
 \RR$ is 
 convex 
 differentiable with $\beta^{-1}$-Lipschitz gradient. Since $g$ may be non smooth, 
 primal algorithms in this context
 need to evaluate $\prox_{\mathrm{g} \circ L}$ or invert $L$ which can be costly 
 numerically. In order to overcome this difficulty, fully split primal-dual algorithms are 
 proposed, e.g., {  in  
 \cite{briceno2011monotone+,condat2013primal,vu2013splitting}}, in 
 which only 
 $\prox_{\mathrm{g}}$, $L$, and $L^\ast$ are computed. These algorithms follow
 from the first order optimality conditions of  \eqref{eq:primal_before_PD}, which, 
under qualification conditions, 
can be written as Problem~\ref{prob:main} with
 \begin{align}\vspace{-10pt}
 	\label{e:primdualops}
 X=\HH=\mathrm{H}\times \mathrm{G},&&A = \partial \mathrm{f} \times 
 \partial \mathrm{g}^\ast, && 
 	B_1 = \nabla \mathrm{h} \times \{0\}, && B_2 = 
 	\begin{bmatrix} 0 & \mathrm{L}^\ast \\ -\mathrm{L} & 0 \end{bmatrix},
 \end{align}
 {where we point out that $B_2$ is monotone and Lipschitz but {\em not 
 cocoercive}, 
 	because it is skew linear and, for every $x\in\HH$, $\scal{x}{B_2x}=0$.}
 We have that, for any solution $x=(\mathrm{x}_1^\ast, \mathrm{x}_2^\ast)\in 
 \zer(A+ B_1 + B_2)$, $\mathrm{x}_1^\ast$ 
 solves~\eqref{eq:primal_before_PD}, where we denote $\zer T=\menge{x\in\HH}{0\in Tx}$
 for any set valued operator $T\colon\HH\to 2^{\HH}$. 
 A method proposed in \cite{vu2013splitting} solves \eqref{eq:primal_before_PD} in a more 
 general context
 by using forward-backward splitting (FB) in the product space with {the} metric
 $\scal{\cdot}{\cdot}_V=\scal{V\cdot}{\cdot}$ for the operators
 $V^{-1}(A+B_2)$ and $V^{-1}B_1$ with a specific choice of self-adjoint strongly 
 monotone linear operator $V$. We recall that the forward-backward splitting 
\cite{combettes2004solving,bruck1975,lions1979splitting,goldstein1964}
 finds a zero of the sum of a maximally monotone and a cocoercive operator, which
 is a particular case of  Problem~\ref{prob:main} when $X=\HH$ and $B_2=0$. This 
 method provides a sequence obtained from the fixed point iteration of the 
 nonexpansive operator (for some 
 $\gamma\in]0,2\beta[$)
 $$
 \TFB := J_{\gamma A}\circ({\Id} - \gamma B_1),
 $$ 
 which converges weakly to a zero of $A+B_1$. Here ${\Id}$ stands for the 
 identity map in 
 $\HH$ and{, for every set valued operator $M\colon\HH\to 2^{\HH}$, 
 $J_{M}=({\Id}+M)^{-1}\colon\HH\to 2^{\HH}$ is the 
 resolvent of $M$, which is single valued and 
 nonexpansive when $M$ is maximally monotone.}  
In the context 
of \eqref{e:primdualops}, the operators $V^{-1}(A+B_2)$ and $V^{-1}B_1$ are 
 maximally monotone and $\beta$-cocoercive in the metric 
 $\scal{\cdot}{\cdot}_V=\scal{V\cdot}{\cdot}$, 
 respectively, which ensures the convergence of the forward-backward splitting. The 
 choice of $V$
 permits the explicit computation of $J_{V^{-1}(A+B_2)}$, which leads to a sequential 
 method that generalizes the algorithm proposed in \cite{chambolle2011first}. A variant 
 for solving 
 \eqref{eq:primal_before_PD} in the case when $h=0$ is proposed in 
 \cite{he2012convergence}. However, previous methods need the skew linear 
 structure of $B_2$ in order to obtain an implementable method.

{ An example in which a non-linear continuous operator $B_2$ arises naturally is
the convex constrained optimization problem
\begin{equation}
	\min_{\substack{x\in C\\g(x)\le 0}}f(x),
\end{equation}
where $f\colon\HH\to\RR$ is convex differentiable with $\beta^{-1}$-Lipschitz-gradient,
$C\subset\HH$ is nonempty, closed and convex, and $g\colon\HH\to\RR$ is a 
$\mathcal{C}^1$ and convex function. The Lagrangian function in this case takes the form
\begin{equation}
L(x,\lambda)=\iota_C(x)+f(x)+\lambda g(x)-\iota_{\RR_+}(\lambda),
\end{equation}
which, under standard qualification conditions can be found by solving the monotone 
inclusion (see \cite{rockafellar1970saddle})
\begin{equation}
0\in A(x,\lambda)+B_1(x,\lambda)+B_2(x,\lambda),
\end{equation}
where $A\colon (x,\lambda)\mapsto N_Cx\times N_{\RR_+}\lambda$ is maximally 
monotone, $B_1\colon (x,\lambda)\mapsto (\nabla f(x),0)$ is cocoercive, and 
$B_2\colon (x,\lambda)\mapsto (\lambda\nabla g(x),-g(x))$ is monotone and continuous
\cite{rockafellar1970saddle}. Of course, the problem can be easily extended to 
consider finitely many
inequality and equality constraints and allow for more general lower semicontinuous 
convex functions than $\iota_C$, but we prefer the simplified version for the ease of 
presentation. Note that the non-linearity of $B_2$ does not allow to use previous 
methods in this context.
}
 
 In the {case when $B_2$ is $L$-Lipschitz for some $L>0$}, since $B:=B_1+B_2$ 
 is monotone 
 and $(\beta^{-1}+L)$--Lipschitz 
 continuous, the forward-backward-forward splitting (FBF) proposed by Tseng in 
 \cite{tseng2000modified} solves Problem~\ref{prob:main}. This method
 generates a sequence from the fixed point iteration of the operator  
 \begin{align*}
 	\TFBF := P_X\circ \left[({\Id} - \gamma B) \circ J_{\gamma A} \circ 
 	({\Id}- 
 	\gamma B) + \gamma B\right],
 \end{align*}
 which converges weakly to a zero of $A+B$, provided that 
 $\gamma\in]0,(\beta^{-1}+L)^{-1}[$. However, this approach has two drawbacks:
 \begin{enumerate}
 	\item  FBF needs to evaluate $B=B_1+B_2$ twice per iteration, without taking into 
 	advantage the cocoercivity property of $B_1$. In the particular case when $B_2=0$,
 	this method computes $B_1$ twice at each iteration, while the forward-backward 
 	splitting needs only one computation of $B_1$ for finding a zero of $A+B_1$. Even if 
 	we cannot ensure that FB is more efficient than FBF in this context, the cost 
 	of each iteration of FB is lower than that of FBF, especially when the computation 
 	cost of $B_1$ is high. This is usually the case, for instance, when $A$, $B_1$, and 
 	$B_2$
 	are as in \eqref{e:primdualops} and we aim at solving \eqref{eq:primal_before_PD}
 	representing a 
 	variational formulation of some partial differential equation (PDE). In this case, 
 the computation of $\nabla\mathrm{h}$ frequently amounts to solving a PDE, which
  is computationally costly.
 	
 	\item The step size $\gamma$ in FBF is bounded above by $(\beta^{-1}+L)^{-1}$, 
 	which in the case when the influence of $B_2$ in the problem is low ($B_2\approx 0$) 
 	leads to a method whose step size cannot go too far beyond $\beta$. In the case 
 	$B_2=0$, the step size $\gamma $ in FB is bounded by $2\beta$. This can affect
 	the performance of the method, since very small stepsizes can lead to slow 
 	algorithms.
 \end{enumerate}
{In the general case when $B_2$ is monotone and continuous, we can also 
apply a version of the method in \cite{tseng2000modified} which uses line search for 
choosing the step-size at each iteration. However, this approach share the disadvantage of 
computing 
twice $B_1$ by iteration and, moreover, in the line search $B_1$ has to be computed several 
times up to find a sufficiently small step-size, which can be computationally costly.}

 In this paper we propose a splitting algorithm for solving Problem~\ref{prob:main} 
 which overcomes previous drawbacks. The method is derived from the 
 fixed point iteration of the operator $T_{\gamma}: \cH 
\rightarrow \cH$, defined by 
\begin{align}\label{eq:ouroperator}
T_\gamma := P_X\circ \left[({\Id}- \gamma B_2) \circ J_{\gamma A} 
\circ ({\Id} - 
\gamma (B_1+B_2)) + \gamma B_2\right],
\end{align}
for some $\gamma\in]0,\chi(\beta,L)[$, where $\chi(\beta,L)\leq\min\{2\beta,L^{-1}\}$ in the 
case when $B_2$ is $L$-Lipschitz.
The algorithm thus obtained implements $B_1$ only once by iteration and it reduces to
FB or FBF when $X=\HH$ and $B_2=0$, or $B_1=0$, respectively, and in these cases 
we have $\chi(\beta,0)=2\beta$ and $\lim_{\beta\to+\infty}\chi(\beta,L)=L^{-1}$. 
{Moreover, 
in the case when $B_2$ is merely continuous, 
the step-size is found
by a line search in which $B_1$ is only computed once at each backtracking step.}
These results 
can be found in Theorem~\ref{t:1} in Section~\ref{sec:2}.
Moreover, 
a generalization of FB for finding a point in $X\cap\zer(A+B_1)$ can be derived 
when $B_2=0$. This can be useful when the solution is known to belong to a closed 
convex set $X$, which is the case, for example, in convex constrained minimization.
The additional projection onto $X$ can improve the performance of the method (see, 
e.g., \cite{BAKS16}). 

 Another contribution of this paper is to include in our method non self-adjoint 
 linear operators in the computation of resolvents and other operators involved. More 
 precisely, in Theorem~\ref{thm:asymmetric_metric} in Section~\ref{sec:asymm}, for 
 an 
 invertible linear operator 
 $P$ (not necesarily self-adjoint) we justify 
 the computation of $P^{-1}(B_1+B_2)$ and 
 $J_{P^{-1}A}$, respectively. 
 In the case when $P$ is self-adjoint and strongly monotone, 
 the properties that $A$, $B_1$ and $B_2$ have with the standard metric
 are preserved by $P^{-1}A$, $P^{-1}B_1$, and $P^{-1}B_2$ in the metric 
 $\scal{\cdot}{\cdot}_P=\scal{P\cdot}{\cdot}$. In this context, variable metric versions 
 of FB and FBF have been developed in 
 \cite{combettes2012variable,vu2013variableFBF}. Of course, a similar generalization 
 can be done for our algorithm, but we go beyond this self-adjoint case and we
 implement  $P^{-1}(B_1+B_2)$ and $J_{P^{-1}A}$, where the linear 
 operator $P$ is strongly monotone but non necesarily self-adjoint. The key for this 
 implementation is the decomposition $P=S+U$, where $U$ is self-adjoint and strongly 
 monotone and $S$ is skew linear. Our implementation follows after coupling $S$ with 
 the monotone and Lipschitz component $B_2$ and using some resolvent identities 
 valid for 
 the metric $\scal{\cdot}{\cdot}_U$. One of the important implications of this issue is 
 the justification of the convergence of some Gauss-Seidel type methods in product 
 spaces, which are deduced from our setting for block triangular linear operators $P$.

Additionally,  we
provide a modification 
of the previous method{ in Theorem~\ref{cor:asymmetricnoinversion}, in which 
linear 
operators 
$P$ may vary among iterations. }In the case when, for every iteration $k\in\NN$, 
$P_k$ is self-adjoint, this feature has also been implemented for FB and FBF in  
\cite{combettes2012variable,vu2013variableFBF}
but with a strong dependence between $P_{k+1}$ and $P_k$ coming from 
the variable metric approach. Instead, in the general case, we modify our method for 
 allowing variable metrics and ensuring convergence under weaker 
conditions. For instance, in the case when $B_2=0$ and $P_k$ is self-adjoint and 
$\rho_k$-strongly monotone for some $\rho_k>0$, 
our condition on our FB variable metric version reduces to 
$(2\beta-\varepsilon)\rho_k>1$ for every 
$k\in\NN$. In the case when $P_k={\Id}/\gamma_k$ this 
condition reduces to $\gamma_k<2\beta-\varepsilon$ which is a standard assumption 
for FB with variable stepsizes. Hence, our condition on operators $(P_k)_{k\in\NN}$
can be interpreted as ``step-size'' bounds.

Moreover, in Section~\ref{sec:5} we use our methods in composite primal-dual 
inclusions, obtaining 
generalizations and new versions of several primal-dual methods 
\cite{chambolle2011first,vu2013variableFBF,patrinos2016asym,combettes2012primal}.
We provide comparisons among methods and new bounds on stepsizes which improve 
several bounds in the literature. Finally, for illustrating the flexibility of the proposed 
methods, 
in Section~\ref{sec:6} we apply them to the obstacle problem in PDE's, to empirical 
risk minimization, to distributed operator splitting schemes and to nonlinear constrained 
optimization. 
In the first example, we take advantage to dropping the extra forward step on $B_1$,
which amounts to reduce the computation of a PDE by iteration. In the 
second example, we use non self-adjoint linear operators in order to obtain a 
Gauss-Seidel 
structure which can be preferable to parallel architectures {for high dimensions.
The third example illustrates how the variable metrics allowed by our proposed algorithm
can be used to develop distributed operator splitting schemes with time-varying 
communication networks. The last example illustrates our backtracking line search 
procedure for nonlinear constrained optimization wherein the underlying operator 
$B_2$ is nonlinear and non Lipschitz. Finally, some numerical examples show the 
performance of the proposed algorithms.
}

\section{Convergence theory}
\label{sec:2}
This section is devoted to study the conditions ensuring the convergence 
of {the iterates generated recursively by} $z^{k+1}=T_{\gamma_k}z^k$ for any 
starting point 
$z^0\in\HH$,
where, for every $\gamma>0$, $T_{\gamma}$ is defined in \eqref{eq:ouroperator}.
We first prove that $T_{\gamma}$ is quasi-nonexpansive for a suitable 
choice of $\gamma$ and satisfies 
$\Fix(T_{\gamma}) = \zer(A+B_1+B_2)\cap X$. Using these results we prove the weak 
convergence of iterates $\{z^k\}_{k\in\NN}$ to a solution to Problem~\ref{prob:main}.

\begin{proposition}[Properties of 
$T_{\gamma}$]\label{prop:Tproperties}
\label{p:prop1}Let $\gamma>0$, assume that hypotheses of Problem~\ref{prob:main} 
hold, {and set $S_{\gamma}:=(\Id - \gamma B_2) \circ J_{\gamma A} \circ (\Id- 
	\gamma (B_1+B_2)) + \gamma B_2$.}
Then, 
\begin{enumerate}

\item \label{prop:Tproperties:item:fixed-points} 
{We have $\zer(A+B_1+B_2)\subset\Fix S_{\gamma}$ and 
$\zer(A+B_1+B_2)\cap X\subset\Fix T_{\gamma}$. Moreover, if $B_2$ is 
$L$-Lipschitz in $\dom B_2$ for some $L>0$ and $\gamma<L^{-1}$ we have 
$
\Fix(S_{\gamma}) = \zer(A+B_1+B_2)
$
and
$
\Fix(T_{\gamma}) = \zer(A+B_1+B_2) \cap X.
$
\item \label{prop:Tproperties:item:quasi0}
For all $z^\ast \in 
\Fix(T_{\gamma})$ and $z\in\dom B_2$, by denoting $x:=J_{\gamma A} (z - \gamma 
(B_1+B_2)z 
)$ we have, for every $\varepsilon>0$,
\begin{align*}
\|T_\gamma z - z^\ast\|^2 &\leq\|z - z^\ast\|^2 - (1-\varepsilon) \|z - x\|^2 + \gamma^2 
\|B_2z - 
B_2x\|^2 \\
&\hspace{.35cm} -\! \frac{\gamma}{\varepsilon}\left(2\beta\varepsilon -\! 
{\gamma}\right)\!\|B_1z - B_1 z^\ast\|^2 
\!-\!\varepsilon\left\|z-x-\frac{\gamma}{\varepsilon}(B_1z-B_1z^*)\right\|^2.
\numberthis\label{eq:fejer0}
\end{align*}}
\item\label{prop:Tproperties:item:quasi} Suppose that
$B_2$ is $L$-Lipschitz {in $\dom A\cup X$ for some $L>0$. For all $z^\ast \in 
\Fix(T_{\gamma})$ and $z\in\dom B_2$, by denoting $x:=J_{\gamma A} (z - \gamma 
(B_1+B_2)z 
)$ we have
\begin{align*}
\|T_\gamma z - z^\ast\|^2 &\leq \|z - z^\ast\|^2 - L^{2}(\chi^2- 
\gamma^2)\|z - x\|^2-\frac{2\beta\gamma}{\chi}\left(\chi 
- \gamma\right)\|B_1z - B_1 z^\ast\|^2\nonumber\\ 
&-\frac{\chi}{2\beta}\left\|z-x-\frac{2\beta\gamma}{\chi}(B_1z-B_1z^*)\right\|^2,
\numberthis\label{eq:fejer}
\end{align*}
}
where
\begin{equation}
\label{e:chi}
\chi:=\frac{4\beta}{1+\sqrt{1+16\beta^2 L^2}}\leq 
\min\{2\beta,L^{-1}\}.
\end{equation}
\end{enumerate}
\end{proposition}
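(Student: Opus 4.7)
The proof decomposes into four pieces following the structure of the statement. I would first dispatch the easy forward inclusion in item~\ref{prop:Tproperties:item:fixed-points}: if $z^* \in \zer(A+B_1+B_2)$, then $-(B_1+B_2)z^* \in A z^*$ rewrites as $z^* = J_{\gamma A}(z^* - \gamma(B_1+B_2)z^*)$, so $S_\gamma z^* = z^* - \gamma B_2 z^* + \gamma B_2 z^* = z^*$, and if additionally $z^* \in X$ then $T_\gamma z^* = P_X z^* = z^*$. The reverse inclusions are postponed to after the quantitative inequalities are in place, since they will follow by substituting $z = z^*$ into those inequalities.

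For item~\ref{prop:Tproperties:item:quasi0}, fix $z \in \dom B_2$ and $z^* \in \Fix T_\gamma \cap \zer(A+B_1+B_2)$ (coinciding with $\Fix T_\gamma$ under the Lipschitz hypothesis, which is the regime of interest), and set $x = J_{\gamma A}(z - \gamma (B_1+B_2)z)$ so that $S_\gamma z = x + \gamma(B_2 z - B_2 x)$. Since $z^* \in X$ and $P_X$ is nonexpansive, $\|T_\gamma z - z^*\|^2 \leq \|S_\gamma z - z^*\|^2$. Writing $S_\gamma z - z^* = (z - z^*) - (z - x) + \gamma(B_2 z - B_2 x)$ and expanding the squared norm produces a sum of six terms; I then apply in succession: (i) monotonicity of $A$ at $(z-x)/\gamma - B_1 z - B_2 z \in A x$ (from the resolvent) and $-(B_1+B_2)z^* \in A z^*$, yielding $\langle z-x,\, x-z^*\rangle \geq \gamma \langle (B_1{+}B_2)z - (B_1{+}B_2)z^*,\, x - z^*\rangle$; and (ii) monotonicity of $B_2$ at $x$ and $z^*$, which discards the residual $-\langle x - z^*,\, B_2 x - B_2 z^*\rangle$ term. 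The bookkeeping collapses to
\[
\|T_\gamma z - z^*\|^2 \leq \|z - z^*\|^2 - \|z - x\|^2 + \gamma^2 \|B_2 z - B_2 x\|^2 - 2\gamma\langle B_1 z - B_1 z^*,\, x - z^*\rangle.
\]
Splitting $x - z^* = (z - z^*) - (z - x)$ in the $B_1$ cross term, cocoercivity kills the piece against $z - z^*$ as $-2\gamma\beta\|B_1 z - B_1 z^*\|^2$, while the piece against $z - x$ is rearranged by the identity $\varepsilon\|z - x - (\gamma/\varepsilon)(B_1 z - B_1 z^*)\|^2 = \varepsilon\|z-x\|^2 - 2\gamma\langle z - x,\, B_1 z - B_1 z^*\rangle + (\gamma^2/\varepsilon)\|B_1 z - B_1 z^*\|^2$. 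Collecting terms produces the claimed inequality of item~\ref{prop:Tproperties:item:quasi0} exactly.

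Item~\ref{prop:Tproperties:item:quasi} is then a direct specialization: substituting $\varepsilon = \chi/(2\beta)$ in item~\ref{prop:Tproperties:item:quasi0} and using $\|B_2 z - B_2 x\|^2 \leq L^2 \|z - x\|^2$, matching the coefficient of $\|z-x\|^2$ requires $1 - \chi/(2\beta) = L^2 \chi^2$, i.e., $2\beta L^2 \chi^2 + \chi - 2\beta = 0$; solving this quadratic and rationalizing the numerator yields the formula for $\chi$ in \eqref{e:chi}, and elementary manipulations of the same identity give $\chi \leq 2\beta$ and $\chi \leq L^{-1}$. Finally, the reverse inclusions in item~\ref{prop:Tproperties:item:fixed-points} follow under the Lipschitz hypothesis by setting $z = z^*$ in the inequality of item~\ref{prop:Tproperties:item:quasi0} for any $\varepsilon \in (0,1)$: the $B_1$ terms vanish, the Lipschitz bound gives $\gamma^2 \|B_2 z^* - B_2 x^*\|^2 \leq \gamma^2 L^2 \|z^* - x^*\|^2$, and after summing $-(1-\varepsilon)$ and $-\varepsilon$ the residual reads $0 \leq -(1 - \gamma^2 L^2)\|z^* - x^*\|^2$; since $\gamma L < 1$, this forces $z^* = x^* = J_{\gamma A}(z^* - \gamma(B_1+B_2)z^*)$, whence $z^* \in \zer(A+B_1+B_2)$, and $z^* \in X$ follows from $z^* \in \range P_X$. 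The analogous statement for $\Fix S_\gamma$ follows more directly: $z = S_\gamma z$ gives $z - x = \gamma(B_2 z - B_2 x)$, and $\gamma L < 1$ forces $z = x$, hence $z \in \zer(A+B_1+B_2)$.

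The main obstacle is the careful algebraic bookkeeping in item~\ref{prop:Tproperties:item:quasi0}: tracking the six expanded terms through the $A$-monotonicity, $B_2$-monotonicity, $B_1$-cocoercivity, and completed-square steps so that the expression collapses to exactly the stated form with free parameter $\varepsilon$, while also ensuring that the inclusion $-(B_1+B_2)z^* \in A z^*$ needed for the $A$-monotonicity step is indeed available at the fixed points under consideration.
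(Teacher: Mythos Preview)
Your approach is essentially the same as the paper's: the forward inclusion in item~\ref{prop:Tproperties:item:fixed-points}, the monotonicity/cocoercivity bookkeeping in item~\ref{prop:Tproperties:item:quasi0}, and the quadratic optimization of $\varepsilon$ in item~\ref{prop:Tproperties:item:quasi} all match the paper's argument step for step. Your direct argument for $\Fix S_\gamma = \zer(A+B_1+B_2)$ (using $z - x = \gamma(B_2 z - B_2 x)$ and $\gamma L < 1$) is also exactly what the paper does.

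There is one genuine gap, however, and it is the one you yourself flag at the end. Your proposed route to the reverse inclusion $\Fix T_\gamma \subset \zer(A+B_1+B_2)\cap X$ is to set $z = z^\ast$ in the inequality of item~\ref{prop:Tproperties:item:quasi0}. But that inequality, as you derived it, requires $-(B_1+B_2)z^\ast \in Az^\ast$ for the $A$-monotonicity step; this is precisely the conclusion you are trying to establish, so the argument is circular. Knowing $z^\ast = T_\gamma z^\ast = P_X(S_\gamma z^\ast)$ does not by itself give $z^\ast = S_\gamma z^\ast$, since $S_\gamma z^\ast$ need not lie in $X$. The paper closes this gap differently: after establishing $\Fix S_\gamma = \zer(A+B_1+B_2)$ directly, it invokes \cite[Proposition~4.49]{bauschke2017convex}, which says that for $T_\gamma = P_X \circ S_\gamma$ with $P_X$ strictly quasi-nonexpansive and $\Fix P_X \cap \Fix S_\gamma \neq \emptyset$, one has $\Fix T_\gamma = \Fix P_X \cap \Fix S_\gamma = X \cap \zer(A+B_1+B_2)$. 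You should either invoke that result or supply a direct argument bridging $\Fix(P_X\circ S_\gamma)$ to $\Fix P_X \cap \Fix S_\gamma$; the substitution $z=z^\ast$ in \eqref{eq:fejer0} cannot do this on its own.
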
 
\begin{proof}
Part~\ref{prop:Tproperties:item:fixed-points}: Let $z^*\in\HH$. We have
{
\begin{align}
	\label{e:auxequiv}
z^*\in \zer(A+B_1+B_2)\quad&\Leftrightarrow\quad  0\in Az^*+B_1z^*+B_2z^*\nonumber\\
&\Leftrightarrow\quad  -\gamma (B_1z^*+B_2z^*)\in \gamma Az^*\nonumber\\
&\Leftrightarrow\quad  z^*=J_{\gamma A}\left(z^*-\gamma 
(B_1z^*+B_2z^*)\right).
\end{align}
}
Then, since $B_2$ is single-valued in $\dom A$, if {$z^*\in \zer(A+B_1+B_2)$}
we have $B_2z^*=B_2J_{\gamma A}(z^*-\gamma 
(B_1z^*+B_2z^*))$ and, hence, {$S_{\gamma}z^*=z^*$ }which yields
{$\zer(A+B_1+B_2)\subset \Fix S_{\gamma}$. Hence, if $z^\ast\in 
\zer(A+B_1+B_2)\cap X$ then $z^\ast\in\Fix P_X$ and $z^\ast\in\Fix S_{\gamma}$,
which yields $z^\ast\in \Fix P_X\circ S_{\gamma}=\Fix T_{\gamma}$.} Conversely, 
if { $B_2$ is $L$-Lipschitz in $\dom B_2$ and $z^*\in \Fix S_{\gamma}$ we 
have }
\begin{equation*}
z^*-J_{\gamma A}(z^*-\gamma(B_1+B_2)z^*)=\gamma\left(B_2z^*-B_2J_{\gamma 
A}(z^*-\gamma(B_1+B_2)z^*) \right),
\end{equation*}
which, from the Lipschitz continuity of $B_2$ yields 
\begin{align*}
\|z^*-J_{\gamma A}(z^*-\gamma(B_1+B_2)z^*)\|&=\gamma\|B_2z^*-B_2J_{\gamma 
A}(z^*-\gamma(B_1+B_2)z^*)\| \\
&\leq \gamma L\|z^*-J_{\gamma 
A}(z^*-\gamma(B_1+B_2)z^*)\|.
\end{align*}
Therefore, if $\gamma<L^{-1}$ we deduce $z^*=J_{\gamma 
A}(z^*-\gamma(B_1+B_2)z^*)$ and from \eqref{e:auxequiv}, we deduce 
{$\zer(A+B_1+B_2)=\Fix S_{\gamma}$. Since $T_{\gamma}=P_XS_{\gamma}$ and 
$P_X$ is 
strictly quasi-nonexpansive, the result follows from 
\cite[Proposition~4.49]{bauschke2017convex}.}

Part~\ref{prop:Tproperties:item:quasi0}: 
Let $z^*\in \Fix T_{\gamma}$, {$z\in\dom B_2$} and define $B := B_1 + B_2$, $y:= 
z - \gamma Bz$,   $x:= 
J_{\gamma A} y$, and $z^+ = T_{\gamma}z$. Note that $(x, y-x) \in 
\gra(\gamma A)$ and, from Part~\ref{prop:Tproperties:item:fixed-points}, $(z^\ast, 
-\gamma Bz^\ast) 
\in \gra(\gamma  A)$. Hence, by the monotonicity of 
$A$ and $B_2$, we have $\dotp{x - z^\ast, x-y -\gamma 
Bz^\ast} \leq 0$ and $\dotp{x - z^\ast, \gamma B_2 z^\ast - \gamma  B_2x}\leq 0$. Thus, 
\begin{align*}
 \dotp{x - z^\ast, x-y - \gamma B_2x} &= \dotp{x - z^\ast, \gamma B_1z^\ast} + \dotp{x - 
 z^\ast, x-y -\gamma Bz^\ast} \\
 &\hspace{2.8cm}+\dotp{x - z^\ast, \gamma B_2z^\ast - \gamma B_2x}  \\
 &\leq \dotp{x - z^\ast, \gamma B_1z^\ast}.
\end{align*}
Therefore, we have
\begin{align}
\label{e:desig1sec2}
2\gamma \dotp{x - z^\ast, B_2z - B_2x} 
&=  2\dotp{ x - z^\ast, 
\gamma B_2z + y - x} + 2 \dotp{ x- z^\ast, x - y - \gamma B_2x} \nonumber\\
&\leq 2 \dotp{ x - z^\ast, \gamma Bz + y - x} + 2\dotp{x - z^\ast, \gamma  B_1z^\ast - 
\gamma B_1z} \nonumber\\
&= 2 \dotp{ x - z^\ast, z - x} + 2\dotp{x - z^\ast, \gamma B_1z^\ast - \gamma B_1z} 
\nonumber\\
&=  \|z - z^\ast\|^2 \!- \!\|x - z^\ast\|^2 \!-\!  \|z - x\|^2 \!+\!  2\dotp{x - z^\ast, \gamma 
B_1z^\ast - \gamma B_1z} .
\end{align}
In addition, by cocoercivity of $B_1$, for all $\varepsilon > 0$, we have
\begin{align}
\label{e:desig2sec2}
2\dotp{x - z^\ast, \gamma B_1z^\ast - \gamma B_1z} 
&= 2\dotp{z - z^\ast, \gamma B_1z^\ast - \gamma B_1z} + 2\dotp{x - z, \gamma B_1z^\ast - 
\gamma B_1z}\nonumber \\
&\leq - 2\gamma \beta\|B_1z - B_1 z^\ast\|^2  + 2\dotp{x - z, \gamma B_1z^\ast - 
\gamma B_1z}\nonumber \\
&= - 2\gamma \beta\|B_1z - B_1 z^\ast\|^2 + \varepsilon\|z-x\|^2 + 
\frac{\gamma^2}{\varepsilon}\|B_1z - B_1 
z^\ast\|^2\nonumber \\
&\hspace{4cm}-\varepsilon\left\|z-x-\frac{\gamma}{\varepsilon}(B_1z-B_1z^*)\right\|^2
\nonumber \\
&= \varepsilon\|z-x\|^2 - \gamma\left(2\beta - 
\frac{\gamma}{\varepsilon}\right)\|B_1z - B_1 
z^\ast\|^2\nonumber \\
&\hspace{4cm}-\varepsilon\left\|z-x-\frac{\gamma}{\varepsilon}(B_1z-B_1z^*)\right\|^2.
\end{align}
Hence, combining \eqref{e:desig1sec2} and \eqref{e:desig2sec2}, it follows from  $z^*\in 
X$, the nonexpansivity of $P_X$, and the Lipschitz property of $B_2$ in $\dom B_2\supset 
X\cup\dom A$ that
{
\begin{align}
\label{e:auxprt3}
\|z^+ - z^\ast\|^2
&\leq \|x - z^\ast + \gamma  B_2z - \gamma B_2x\|^2 \nonumber\\
&= \|x - z^\ast\|^2 + 2\gamma\dotp{x - z^\ast, B_2z - B_2x} + \gamma^2\| B_2z - B_2x\|^2 
\nonumber\\
&\leq  \|z - z^\ast\|^2  -  \|z - x\|^2 + \gamma^2 \|B_2z - B_2x\|^2 \nonumber\\
&\hspace{.35cm}+ \varepsilon\|z\!- x\|^2\! -\! \gamma\left(2\beta -\! 
\frac{\gamma}{\varepsilon}\right)\!\|B_1z - B_1 z^\ast\|^2 
\!-\!\varepsilon\left\|z-x-\frac{\gamma}{\varepsilon}(B_1z-B_1z^*)\right\|^2,
\end{align}
and the result follows.

Part 3: It follows from \eqref{e:auxprt3} and the Lipschitz property on $B_2$ that
\begin{align*}
&\|z^+ - z^\ast\|^2\leq \|z - z^\ast\|^2 - L^2\left(\frac{1- 
\varepsilon}{L^2}-\gamma^2\right)\|z - x\|^2 - 
\frac{\gamma}{\varepsilon}\left(2\beta\varepsilon - 
\gamma\right)\|B_1z - B_1 z^\ast\|^2 \\
&\hspace{.35cm}-\varepsilon\left\|z-x-\frac{\gamma}{\varepsilon}(B_1z-B_1z^*)\right\|^2.
\end{align*}}
In order to obtain the largest interval for $\gamma$ ensuring that 
{the second and third terms on the right of} the above equation are 
negative, we choose 
the value $\varepsilon$ so that $\sqrt{1-\varepsilon}/L = 
2\beta\varepsilon$, which yields 
$\varepsilon=(-1+\sqrt{1+16\beta^2L^2})(8\beta^2L^2)^{-1}$. For this 
choice of $\varepsilon$ we obtain $\chi=\sqrt{1-\varepsilon}/L = 
2\beta\varepsilon$.
\end{proof}

{ In the case when $B_2$ is merely continuous, we need the following result,
	which gives additional information to
	\cite[Lemma~3.3]{tseng2000modified} and allows us to guarantee the convergence of 
	the algorithm under weaker assumptions than \cite[Theorem~3.4]{tseng2000modified}.

	\begin{lemma}
		\label{lem:Ts}
	In the context of Problem~\ref{prob:main}, define, for every $z\in\dom B_2$ and $\gamma 
	>0$,
	\begin{equation}
		\label{e:defxphi} 
		x_{z}\colon \gamma\mapsto J_{\gamma 
		A}(z-\gamma(B_1+B_2)z)\quad \text{and \quad }\varphi_z\colon \gamma\mapsto 
		\frac{\|z-x_z(\gamma)\|}{\gamma}.
	\end{equation}
Then,  the following hold:
	\begin{enumerate}
		\item \label{lem:Tsi}$\varphi_z$ is nonincreasing and
		\begin{equation*}
		(\forall z\in\dom A)\quad \lim_{\gamma\downarrow 
			0^+}\varphi_z(\gamma)=
		\|(A+B_1+B_2)^0(z)\|:=\inf_{w\in(A+B_1+B_2)z}\|w\|.
		\end{equation*}
		\item\label{lem:Tsii} For every $\theta\in]0,1[$ and $z\in\dom B_2$, there exists 
		$\gamma(z)>0$ such 
		that, for every $\gamma\in]0,\gamma(z)]$, 
		\begin{equation}
		\label{e:armijocond}
		\gamma\|B_2z-B_2x_z(\gamma))\|
		\leq\theta\|z-x_z(\gamma)\|.
		\end{equation}
	\end{enumerate}
\end{lemma}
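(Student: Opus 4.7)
My plan is to derive both parts from the resolvent identity $a(\gamma):=\gamma^{-1}(z-x_z(\gamma))-(B_1+B_2)z\in Ax_z(\gamma)$, which follows directly from the definition of $x_z(\gamma)$. For the monotonicity in part~\ref{lem:Tsi}, I pick $\gamma_1>\gamma_2>0$, set $u_i:=z-x_z(\gamma_i)$, and apply monotonicity of $A$ to the two pairs $(x_z(\gamma_i),a(\gamma_i))$; the $(B_1+B_2)z$ terms cancel, leaving
\[
\left(\tfrac{1}{\gamma_1}+\tfrac{1}{\gamma_2}\right)\langle u_1,u_2\rangle \;\geq\; \frac{\|u_1\|^2}{\gamma_1}+\frac{\|u_2\|^2}{\gamma_2}.
\]
Bounding the left side by Cauchy--Schwarz and rearranging factors as $(\gamma_2\|u_1\|-\gamma_1\|u_2\|)(\|u_1\|-\|u_2\|)\leq 0$; together with $\gamma_1>\gamma_2$ this simultaneously forces $\|u_1\|\geq\|u_2\|$ (so $\gamma\mapsto\|z-x_z(\gamma)\|$ is nondecreasing, hence $x_z(\gamma)\to z$ as $\gamma\downarrow 0$) and $\|u_1\|/\gamma_1\leq\|u_2\|/\gamma_2$, the desired monotonicity.

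For the limit, set $F:=A+B_1+B_2$, which is maximally monotone by the standing assumptions. For any $w\in Fz$, writing $w-(B_1+B_2)z\in Az$ and pairing with $a(\gamma)\in Ax_z(\gamma)$ via monotonicity yields $\|u(\gamma)\|^2/\gamma\leq\langle u(\gamma),w\rangle$, so Cauchy--Schwarz gives $\varphi_z(\gamma)\leq\|w\|$, whence $\lim_{\gamma\downarrow 0}\varphi_z(\gamma)\leq\|F^0z\|$. For the reverse bound, observe that $w_\gamma:=a(\gamma)+(B_1+B_2)x_z(\gamma)\in Fx_z(\gamma)$ and $\bigl|\,\|w_\gamma\|-\varphi_z(\gamma)\,\bigr|\leq\|(B_1+B_2)z-(B_1+B_2)x_z(\gamma)\|\to 0$ by continuity of $B_1+B_2$ at $z$. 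Since $(w_\gamma)$ is bounded, a bounded-subsequence argument in $\cH$ provides a weak cluster point $w^*$; combined with $x_z(\gamma)\to z$ strongly, the strong--weak closedness of $\gra F$ forces $w^*\in Fz$, so $\lim\varphi_z(\gamma)=\|w^*\|\geq\|F^0z\|$.

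For part~\ref{lem:Tsii}, I first dispose of the trivial case $0\in Fz$: the resolvent identity yields the equivalence $x_z(\gamma)=z\Leftrightarrow 0\in Fz$, so in that case both sides of \eqref{e:armijocond} vanish for every $\gamma>0$. Otherwise $\varphi_z(\gamma)>0$ for every $\gamma>0$, and by part~\ref{lem:Tsi} we have $\varphi_z(\gamma)\geq\varphi_z(\gamma_0)>0$ for all $\gamma\in\left]0,\gamma_0\right]$ with any fixed $\gamma_0$. Rewriting \eqref{e:armijocond} as $\|B_2z-B_2x_z(\gamma)\|\leq\theta\varphi_z(\gamma)$, the right-hand side is uniformly bounded below on $\left]0,\gamma_0\right]$, while the left-hand side tends to $0$ as $\gamma\downarrow 0$ by continuity of $B_2$ at $z$ combined with $x_z(\gamma)\to z$ from part~\ref{lem:Tsi}; choosing $\gamma(z)$ small enough concludes.

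The main obstacle I anticipate is the reverse inequality in the limit of part~\ref{lem:Tsi}: it requires both the boundedness of $(w_\gamma)$ (supplied by the upper bound already proved) and the strong--weak closedness of $\gra F$, which in turn relies critically on the hypothesis that $A+B_2$ is maximally monotone, so that the full sum $F=(A+B_2)+B_1$ inherits maximal monotonicity once $B_1$ is cocoercive with full domain.
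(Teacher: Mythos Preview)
Your approach matches the paper's: monotonicity of $A$ applied to the two resolvent inclusions plus Cauchy--Schwarz for part~\ref{lem:Tsi}, and continuity of $B_2$ together with $x_z(\gamma)\to z$ for part~\ref{lem:Tsii}. The paper reaches the nonincreasing property via a slightly different factoring (bounding $\langle u_1/\gamma_1,u_2/\gamma_2\rangle$ by $(\varphi_z(\gamma_1)^2+\varphi_z(\gamma_2)^2)/2$ to obtain $(\gamma_1-\gamma_2)(\varphi_z(\gamma_1)^2-\varphi_z(\gamma_2)^2)\le 0$ directly) and defers the limit identity entirely to a citation of Tseng, so your self-contained limit argument is a genuine addition.

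Two imprecisions need fixing. First, the parenthetical ``$\gamma\mapsto\|z-x_z(\gamma)\|$ nondecreasing, hence $x_z(\gamma)\to z$'' is a non sequitur: monotonicity alone only gives that $\|z-x_z(\gamma)\|$ has a nonnegative limit as $\gamma\downarrow 0$, not that the limit is $0$. The correct justification, for $z\in\dom A$ (which is the scope of the limit statement and the only case actually used later), is your own upper bound $\varphi_z(\gamma)\le\|F^0z\|$, giving $\|z-x_z(\gamma)\|\le\gamma\|F^0z\|\to 0$; reorder so that this bound precedes the places where you invoke $x_z(\gamma)\to z$. Second, in the reverse inequality you write $\lim\varphi_z(\gamma)=\|w^*\|$, but weak convergence only yields $\|w^*\|\le\liminf\|w_{\gamma_n}\|=\lim\varphi_z(\gamma)$ via weak lower semicontinuity of the norm; this is still sufficient, since $w^*\in Fz$ gives $\|w^*\|\ge\|F^0z\|$.
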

\begin{proof}
Part \ref{lem:Tsi}:	Denote $B:=B_1+B_2$.
If $z\in\zer(A+B)$ then it follows from \eqref{e:auxequiv} that $\varphi_z\equiv 0$ and there 
is nothing to prove. Hence, 
assume $z\in\dom B_2\setminus\zer(A+B)$ which yields $\varphi_z(\gamma)>0$ for every 
$\gamma>0$. From the definition of
$J_{\gamma A}$, we have $(z-x_z(\gamma))/\gamma-Bz\in A(x_z(\gamma))$ for every 
$\gamma>0$ and,
from the monotonicity of $A$, we deduce that, for every strictly positive constants 
$\gamma_1$ and $\gamma_2$ we have
\begin{align}
0&\le \Scal{\frac{z-x_z(\gamma_1)}{\gamma_1}-\frac{z-x_z(\gamma_2)}{\gamma_2}}
{x_z(\gamma_1)-x_z(\gamma_2)}\nonumber\\
&=-\frac{\|z-x_z(\gamma_1)\|^2}{\gamma_1}+\left(\frac{1}{\gamma_1}+\frac{1}{\gamma_2}\right)
\scal{z-x_z(\gamma_1)}{z-x_z(\gamma_2)}-\frac{\|z-x_z(\gamma_2)\|^2}{\gamma_2}.
\end{align}
Therefore
\begin{align}
\label{e:DOng}
\gamma_1\varphi_z(\gamma_1)^2+\gamma_2\varphi_z(\gamma_2)^2&\le 
(\gamma_1+\gamma_2)
\Scal{\frac{z-x_z(\gamma_1)}{\gamma_1}}{\frac{z-x_z(\gamma_2)}{\gamma_2}}\nonumber\\
&\le 
\frac{\gamma_1+\gamma_2}{2}(\varphi_z(\gamma_1)^2+\varphi_z(\gamma_2)^2),
\end{align}
which is equivalent to
$
(\gamma_1-\gamma_2)(\varphi_z(\gamma_1)^2-\varphi_z(\gamma_2)^2)\le 0,
$
and the monotonicity of $\varphi_z$ is obtained. The limit follows from
\cite[Lemma~3.3\&Eq~(3.5)]{tseng2000modified}.

Part \ref{lem:Tsii}: As before, if $z\in\zer(A+B)$ we have $z=x_z(\gamma)$ for every 
$\gamma>0$ and, 
hence, there is nothing to prove. From \ref{lem:Tsi} we have that, for every 
$z\in\dom(A)\setminus 
\zer(A+B)$,
\begin{equation*}
0<\|z-x_z(1)\|\le \lim_{\gamma\downarrow 
	0^+}\frac{\|z-x_z(\gamma)\|}{\gamma}= \|(A+B_1+B_2)^0(z)\|.
\end{equation*}
Therefore, $\lim_{\gamma\downarrow 
	0^+}x_z(\gamma)=z$ and from continuity of $B_2$, $\lim_{\gamma\downarrow 
	0^+}\|B_2z-B_2x_z(\gamma)\|=0$. This ensures the existence of $\gamma(z)>0$
such that, for every $\gamma\in]0,\gamma(z)]$, \eqref{e:armijocond} holds.
\end{proof}
\begin{remark}
	Note that the previous lemma differs from \cite[Lemma~3.3]{tseng2000modified}
	because we provide the additional information $\varphi_z$ nonincreasing. This 
	property is used in \cite{Nghia},
	 proved in \cite{Dong}, and will be 	crucial
	for obtaining the convergence of the algorithm with line search to a solution to 
	Problem~\ref{prob:main} under weaker assumptions. We keep our proof for 
	the sake of completeness and because the inequality \eqref{e:DOng} is slightly stronger 
	than that obtained in \cite{Dong}.
\end{remark}
}

\begin{theorem}[Forward-backward-half forward algorithm]
\label{t:1}
{Under the assumptions of Problem~\ref{prob:main}, 
let $z^0 \in \dom A\cup X$, and consider the sequence $\{z_k\}_{k\in\NN}$ recursively 
defined 
by $z^{k+1} := T_{\gamma_k} z^k$ or, equivalently,
 \begin{equation}
 \label{e:algomain1}
 (\forall k\in\NN)\quad 
 \begin{array}{l}
 \left\lfloor
 \begin{array}{l}
 x^k = J_{\gamma_k A}(z^k - \gamma_k (B_1 + B_2)z^k) \\[2mm]
 z^{k+1} = P_X \big(x^k + \gamma_k B_2z^k - \gamma_k 
 B_2x^k\big),
 \end{array}
 \right.
 \end{array}
 \end{equation}
where $\{\gamma_k\}_{k \in \NN}$ is a sequence of 
stepsizes satisfying one of the following conditions:
\begin{enumerate}
	\item Suppose that $B_2$ is $L$-Lipschitz in $\dom A\cup X$. Then, for every $k\in\NN$, 
	$\gamma_k\in[\eta,\chi-\eta]$, where $\eta\in\left]0,\chi/2\right[$ and 
	$\chi$ is defined in~\eqref{e:chi}.
\item Suppose $X\subset\dom A$ and let $\varepsilon\in\left]0,1\right[$, 
$\sigma\in]0,1[$,
and $\theta\in\left]0,\sqrt{1-\varepsilon}\right[$. Then, for every $k\in\NN$, $\gamma_k$ is 
the largest 
$\gamma\in 
\{2\beta\varepsilon\sigma,2\beta\varepsilon\sigma^2,\cdots\}$ satisfying 
\eqref{e:armijocond} with $z=z^k$, and at least one of the following additional conditions 
holds:
\begin{enumerate}
	\item $\liminf_{k\to\infty}\gamma_k=\delta>0$.
	\item $B_2$ is uniformly continuous in any weakly compact subset of
	$X$.
\end{enumerate}
\end{enumerate}
Then, $\{z_k\}_{k\in\NN}$ converges weakly to a solution 
to Problem~\ref{prob:main}.
}
\end{theorem}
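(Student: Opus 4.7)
The strategy is a standard Fejér-monotonicity plus Opial's lemma argument, using Proposition \ref{prop:Tproperties} as the source of the Fejér inequalities, and a demiclosedness argument based on the maximal monotonicity of $A+B_1+B_2$ to identify weak cluster points with solutions.

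In case 1 (Lipschitz $B_2$), note that $\zer(A+B_1+B_2)\cap X\subset\Fix T_{\gamma_k}$ by Proposition \ref{prop:Tproperties}(1), since $\gamma_k<\chi\le L^{-1}$. Fixing any solution $z^\ast$ and applying \eqref{eq:fejer}, each of the coefficients $L^2(\chi^2-\gamma_k^2)$, $2\beta\gamma_k(\chi-\gamma_k)/\chi$, and $\chi/(2\beta)$ is bounded below by a positive constant, because $\gamma_k\in[\eta,\chi-\eta]$. This yields Fejér monotonicity of $\{z^k\}$ with respect to $\zer(A+B_1+B_2)\cap X$, and telescoping produces $\sum_k\|z^k-x^k\|^2<\infty$ and $\sum_k\|B_1z^k-B_1z^\ast\|^2<\infty$, so in particular $z^k-x^k\to 0$ and $B_1z^k\to B_1z^\ast$. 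Using nonexpansivity of $P_X$ and the $L$-Lipschitz property of $B_2$ on $\dom A\cup X$, $\|z^{k+1}-x^k\|\le \gamma_k L\|z^k-x^k\|\to 0$, so $z^{k+1}-z^k\to 0$ as well. The sequence $\{z^k\}$ is bounded; take a subsequence $z^{k_j}\weakly\bar z$. Since $z^{k+1}\in X$ and $X$ is weakly closed, $\bar z\in X$ and also $x^{k_j}\weakly\bar z$. From the resolvent inclusion,
\begin{equation*}
\frac{z^{k_j}-x^{k_j}}{\gamma_{k_j}}-B_1z^{k_j}-B_2z^{k_j}+B_1x^{k_j}+B_2x^{k_j}\in(A+B_1+B_2)x^{k_j},
\end{equation*}
and the left-hand side tends strongly to $0$ by the Lipschitz continuity of $B_1,B_2$ on $\dom A\cup X$. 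Maximal monotonicity of $A+B_1+B_2$ then forces $0\in(A+B_1+B_2)\bar z$, so $\bar z$ is a solution, and Opial's lemma yields weak convergence.

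In case 2 (line search with continuous $B_2$), Lemma \ref{lem:Ts}(ii) guarantees that the backtracking terminates in finitely many steps at every iteration. Since $z^\ast\in\zer(A+B_1+B_2)\cap X\subset\Fix T_{\gamma_k}$ by Proposition \ref{prop:Tproperties}(1) (for $X\subset\dom A$ the argument does not require $\gamma<L^{-1}$ because $z^\ast\in\dom A$ and we apply the first inclusion in part 1), Proposition \ref{prop:Tproperties}(2) together with the Armijo bound $\gamma_k^2\|B_2z^k-B_2x^k\|^2\le\theta^2\|z^k-x^k\|^2$ gives
\begin{equation*}
\|z^{k+1}-z^\ast\|^2\le\|z^k-z^\ast\|^2-(1-\varepsilon-\theta^2)\|z^k-x^k\|^2-\frac{\gamma_k}{\varepsilon}(2\beta\varepsilon-\gamma_k)\|B_1z^k-B_1z^\ast\|^2,
\end{equation*}
where we dropped the last nonpositive term. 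Since $\gamma_k\le 2\beta\varepsilon\sigma$ and $\theta^2<1-\varepsilon$, both coefficients are uniformly positive, giving Fejér monotonicity and $\sum_k\|z^k-x^k\|^2<\infty$ and $\sum_k\gamma_k\|B_1z^k-B_1z^\ast\|^2<\infty$. Under subcase (a), $\liminf\gamma_k=\delta>0$ immediately yields $\|B_2z^k-B_2x^k\|\le(\theta/\delta)\|z^k-x^k\|\to 0$ and $B_1z^k\to B_1z^\ast$, and the demiclosedness argument from case 1 applies verbatim (now using continuity rather than Lipschitz continuity of $B_2$ to pass to the limit in $B_2x^{k_j}-B_2z^{k_j}\to 0$ along the bounded subsequence).

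Under subcase (b), we argue by contradiction: if $\gamma_{k_j}\to 0$ along a subsequence, then by the backtracking rule the tentative stepsize $\gamma_{k_j}/\sigma$ violates \eqref{e:armijocond}, i.e., $\|B_2z^{k_j}-B_2x_{z^{k_j}}(\gamma_{k_j}/\sigma)\|>\theta\,\varphi_{z^{k_j}}(\gamma_{k_j}/\sigma)$. The monotonicity of $\varphi_{z^{k_j}}$ from Lemma \ref{lem:Ts}(i) gives $\varphi_{z^{k_j}}(\gamma_{k_j}/\sigma)\le\varphi_{z^{k_j}}(\gamma_0)$ with $\gamma_0$ an arbitrary fixed small positive number, which bounds the right-hand side uniformly; meanwhile $x_{z^{k_j}}(\gamma_{k_j}/\sigma)-z^{k_j}\to 0$ strongly, so by the assumed uniform continuity of $B_2$ on the weakly compact closure of $\{z^k\}\cup\{x_{z^{k_j}}(\gamma_{k_j}/\sigma)\}$ the left-hand side tends to $0$, contradicting $\theta>0$. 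Hence $\liminf\gamma_k>0$ and subcase (a) applies. Weak convergence then follows by Opial's lemma in both subcases.

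\textbf{Main obstacle.} The delicate step is subcase (b): ruling out $\gamma_k\to 0$ requires a careful joint use of the Tseng-type monotonicity of $\varphi_z$ from Lemma \ref{lem:Ts}(i), uniform continuity of $B_2$ on weakly compact sets, and Fejér boundedness of the iterates. The rest of the proof is standard once the Fejér inequality is in place.
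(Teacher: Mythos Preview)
Your treatment of Case~1 and Case~2(a) is essentially the paper's argument and is correct.

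The gap is in Case~2(b). Your contradiction argument does not go through. First, you invoke the monotonicity of $\varphi_{z}$ in the wrong direction: $\varphi_z$ is \emph{nonincreasing}, so once $\gamma_{k_j}/\sigma<\gamma_0$ you get $\varphi_{z^{k_j}}(\gamma_{k_j}/\sigma)\ge\varphi_{z^{k_j}}(\gamma_0)$, not $\le$. Second, even with the correct inequality there is no contradiction: from the violated Armijo you have $\|B_2z^{k_j}-B_2\tilde x^{k_j}\|>\theta\,\varphi_{z^{k_j}}(\tilde\gamma_{k_j})\ge 0$, and if the left side tends to $0$ this only forces $\varphi_{z^{k_j}}(\tilde\gamma_{k_j})\to 0$, which is perfectly consistent with $\theta>0$. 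To obtain an actual contradiction you would need a uniform positive lower bound on $\varphi_{z^{k_j}}(\gamma_0)$ along the subsequence; this is unavailable because you only have weak convergence of $z^{k_j}$ and no assumed local boundedness of $z\mapsto\|(A+B)^0z\|$ (an assumption the paper explicitly avoids; see the remark following Lemma~\ref{lem:Ts}). Third, the claim $\tilde x^{k_j}-z^{k_j}\to 0$ is asserted but not justified in your outline.

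The paper does \emph{not} argue by contradiction in subcase~(b): it allows $\gamma_{k}\to 0$ along a weakly convergent subsequence and shows directly that the weak limit is a zero of $A+B_1+B_2$. The monotonicity of $\varphi_z$ is used in the form $\varphi_{z^k}(\tilde\gamma_k)\le\varphi_{z^k}(\gamma_k)$ (since $\tilde\gamma_k>\gamma_k$), giving $\|z^k-\tilde x^k\|\le\sigma^{-1}\|z^k-x^k\|\to 0$; then uniform continuity of $B_2$ yields $B_2z^k-B_2\tilde x^k\to 0$, and the violated Armijo inequality then forces $(z^k-\tilde x^k)/\tilde\gamma_k\to 0$. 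Passing to the limit in $\tilde w^k:=(z^k-\tilde x^k)/\tilde\gamma_k+B\tilde x^k-Bz^k\in(A+B)\tilde x^k$ via weak--strong closedness of the graph of $A+B$ finishes the argument.
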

\begin{proof}
{In the case when $B_2$ is $L$-Lipschitz in $\dom A\cup X$, it} follows from 
Proposition~\ref{prop:Tproperties}(\ref{prop:Tproperties:item:quasi}) that the sequence 
$\{z^{k}\}_{k \in 
\NN}$ is Fej{\'e}r monotone with 
respect to $\zer(A+B_1+B_2) \cap X$. Thus, to show that $\{z^k\}_{k \in \NN}$  
converges weakly to a solution to Problem~\ref{prob:main}, we only need to prove that all 
of 
its weak subsequential limits lie in $\zer(A+B_1+B_2) \cap X$ {\cite[Theorem 
5.33]{bauschke2017convex}.} Indeed, 
 it follows from Proposition~\ref{prop:Tproperties} and our hypotheses on the stepsizes 
 that, for every $z^*\in\Fix 
 T_{\gamma}$,
\begin{align}
	\label{e:auxfejer}
	\|z^{k}-z^*\|^2-\|z^{k+1}-z^*\|^2&\ge 
	L^2\eta^2\|z^k-x^k\|^2+\frac{2\beta\eta^2}{\chi}\|B_1z^k-B_1z^*\|^2\nonumber\\
	&\hspace{20pt}+\frac{\chi}{2\beta}
	\left\|z^k-x^k-\frac{2\beta\gamma_k}{\chi}(B_1z^k-B_1z^*)\right\|^2.
\end{align}
Therefore, we deduce from \cite[Lemma~3.1]{combettes2001quasi} that 
\begin{equation}
	\label{e:tozero}
	z^k-x^k\to 0
\end{equation}
when $L>0$ and 
$0<\beta<\infty$\footnote{The case $B_1=0$ ($\beta=+\infty$) 
	has been studied by Tseng in \cite{tseng2000modified}. In the case when $B_2=0$ 
	we 
	can also obtain convergence from 
	Proposition~\ref{prop:Tproperties}, since $L=0$ implies $\chi=2\beta$ and even 
	since the first term in the right hand side of \eqref{e:auxfejer} vanishes, the other two 
	terms yield $z^k-x^k\to0$.}. Now let $z\in\HH$ be the weak limit point of some 
	subsequence of 
$\{z^k\}_{k\in\NN}$. Since $z^k\in X$ for every $k\ge 1$ and $X$ is weakly 
sequentially closed {\cite[Theorem 3.34]{bauschke2017convex}} 
we deduce $z\in X$. Moreover, {by denoting $B:=B_1+B_2$,} it follows from $x^k 
= J_{\gamma_k 
A}(z^k - \gamma_k Bz^k)$ that {$u^k := \gamma_k^{-1}(z^k - x^k) - Bz^k + 
Bx^k\in 
(A+B)x^k$. Then, \eqref{e:tozero}, $\gamma_k\ge \eta>0$ and the Lipschitz continuity of 
$B$ yield $u^k 
\rightarrow 0$.
Now, since $A+B_2$ is maximally monotone and $B_1$ is cocoercive with full 
domain, 
$A+B$ is maximally monotone and its graph is}
closed in  the weak-strong topology in $\HH\times\HH$, which yields 
{$0\in Az+Bz$} and the result follows.

{
In the second case, we deduce from 
Proposition~\ref{prop:Tproperties}
(\ref{prop:Tproperties:item:fixed-points}\&\ref{prop:Tproperties:item:quasi0}) and 
$\gamma_k\leq 2\beta\varepsilon\sigma$ that,
for every $z^*\in\zer(A+B_1+B_2)\cap X$ we have
\begin{align*}
\|z^{k} - z^\ast\|^2- \|z^{k+1} - z^\ast\|^2&\geq  (1-\varepsilon) \|z^k - x^k\|^2  
+\! \frac{\gamma_k}{\varepsilon}\left(2\beta\varepsilon - 
{\gamma_k}\right)\|B_1z^k - B_1 z^\ast\|^2 \\
&\hspace{.5cm}+\varepsilon\left\|z^k-x^k-\frac{\gamma_k}{\varepsilon}(B_1z^k-B_1z^*)\right\|^2
-\gamma_k^2 \|B_2z^k - B_2x^k\|^2\\
&\geq  (1-\varepsilon-\theta^2) \|z^k - x^k\|^2  
+\! 2\beta\varepsilon(1-\sigma)\gamma_k\|B_1z^k - B_1 z^\ast\|^2,
\numberthis\label{eq:fejer1}
\end{align*}
where in the last inequality we use the conditions on $\{\gamma_k\}_{k\in\NN}$,
whose existence is guaranteed by Lemma~\ref{lem:Ts}\eqref{lem:Tsii} because $z^k\in 
X\subset \dom A$. 
Then, we deduce from 
\cite[Lemma~3.1]{combettes2001quasi} that 
$z^k-x^k\to 0$.
Now let $z$ be a weak limit point of a subsequence $\{z^{k}\}_{k\in K}$, with $K\subset 
\NN$.
If $\liminf_{k\to\infty}\gamma_k=\delta>0$, from \eqref{e:armijocond} and $z^k-x^k\to 0$
we have $B_2x^k-B_2z^k\to 0$
and the proof is analogous to the previous case. Finally, for the last case, suppose that 
there exists a subsequence of $\{\gamma_{k}\}_{k\in K}$ (called similarly) satisfying 
$\lim_{k\to\infty,k\in K}\gamma_k=0$. Our choice of $\gamma_k$ yields, for every $k\in K$,
\begin{equation}\label{e:palotro}
\theta\|z^k-J_{\tilde{\gamma}_k A}(z^k-\tilde{\gamma}_k B  
z^k)\|/\tilde{\gamma}_k<\|B_2z^k-B_2J_{\tilde{\gamma}_k A}(z^k-\tilde{\gamma}_k B  
z^k)\|,
\end{equation}
 where $\tilde{\gamma}_k=\gamma_k/\sigma>\gamma_k$ and, from 
 Lemma~\ref{lem:Ts}\eqref{lem:Tsi} we 
 have
 \begin{align}
 \label{e:auxLS}
\sigma\|z^k-J_{\tilde{\gamma}_k A}(z^k-\tilde{\gamma}_k B  
z^k)\|/{\gamma}_k&= \|z^k-J_{\tilde{\gamma}_k A}(z^k-\tilde{\gamma}_k B  
 z^k)\|/\tilde{\gamma}_k\nonumber\\
 &\le  \|z^k-J_{{\gamma}_k A}(z^k-{\gamma}_k B  
 z^k)\|/{\gamma}_k,
 \end{align}
 which, from $z^k-x^k\to 0$, yields 
$$\|z^k-J_{\tilde{\gamma}_k A}(z^k-\tilde{\gamma}_k B  
z^k)\|\le \|z^k-x^k\|/\sigma\to 0$$
as $k\to\infty, k\in K$. Therefore, since $z_k\weakly z$, the sequence $\{\tilde{x}^k\}_{k\in 
K}$ defined by
$$(\forall k\in K)\quad \tilde{x}^k:=J_{\tilde{\gamma}_k A}(z^k-\tilde{\gamma}_k B  
z^k)$$ satisfies $\tilde{x}^k\weakly z$ as $k\to+\infty,k\in K$ and 
\begin{equation}
\tilde{w}^k:=\frac{z^k-\tilde{x}^k}{\tilde{\gamma}_k}+ B \tilde{x}^k- B  
z^k\in (A+B_1+B_2)\tilde{x}^k.
\end{equation}
Hence, since $\{z\}\cup\bigcup_{k\in\NN}[\tilde{x}^k,z^k]$ is a weakly compact 
subset 
of $X$ \cite[Lemma~3.2]{Salzo}, it follows from the uniform continuity of $B_2$ that the 
right hand side of 
\eqref{e:palotro} goes to $0$ and, hence, ${(z^k-\tilde{x}^k)}/{\tilde{\gamma}_k}\to 0$ as 
$k\to\infty,k\in K$. Moreover, since $B_1$ is uniformly continuous, $B=B_1+B_2$
is also locally uniformly continuous and $B \tilde{x}^k- B  
z^k\to 0$, which yields $\tilde{w}^k\to 0$ as $k\to+\infty,k\in K$. The result is obtained as in 
the first case since the graph of $A+B$ is weakly-strongly closed in the product topology.
}
\end{proof}

\begin{remark}
	\begin{enumerate}
		\item In \cite[Theorem~3.4]{tseng2000modified} the local boundedness of $z\mapsto 
		\min_{w\in(A+B)z}\|w\|$ is needed to guarantee the convergence of the method with 
		line 
		search. We drop this assumption by using the monotonicity of $\varphi_z$ in 
		Lemma~\ref{lem:Ts}\eqref{lem:Tsi}, which leads us to the inequality \eqref{e:auxLS}.
		\item Since continuity on compact sets yields uniform continuity, in the finite 
		dimensional setting, the assumption on $B_2$ reduces to the mere continuity on $X$ 
		(see \cite[Remark~3.1(v)]{Salzo}). In this case, we do not need to assume further 
		assumptions than those given in Problem~\ref{prob:main}.
	\end{enumerate}
	
	\end{remark}

\begin{remark}
The maximal monotonicity assumption on $A+B_2$ is satisfied, for 
instance, if $\mathrm{cone}(\dom A-\dom 
B_2)=\overline{\mathrm{span}}(\dom A-\dom B_2)$, where, for any set 
$D\subset\HH$, 
$\mathrm{cone}(D)=\menge{\lambda d}{\lambda\in\RR_+, d\in D}$
and $\overline{\mathrm{span}}(D)$ is the smallest closed linear 
subspace of $\HH$ containing $D$ \cite[Theorem~3.11.11]{Zalinescu}. 
\end{remark}

\begin{remark}
	\label{rem:chi}
	{In the case when $B_2$ is $L$-Lipschitz in $\dom A\cup X$, the} stepsize 
	upper 
	bound $\chi= 
	\chi(\beta,L)$ defined in \eqref{e:chi} 
	depends on the 
	cocoercivity parameter $\beta$ of $B_1$ and the Lipschitz parameter 
	$L$ of $B_2$. In order to fully recover Tseng's splitting algorithm or the 
	forward-backward algorithm in the cases when $B_1$ or $B_2$ are zero, 
	respectively, we study the asymptotic behaviour of $\chi(\beta,L)$ 
	when $L\to 0$ and $\beta\to+\infty$. It is easy to verify that
	\begin{align*}
		\lim_{L\to 0}\chi(\beta,L)=2\beta\quad\text{and}\quad \lim_{\beta\to 
			+\infty}\chi(\beta,L)=\frac{1}{L},
	\end{align*}
	which are exactly the bounds on the stepsizes of forward-backward and 
	Tseng's splittings. 
	{ On the other hand, when $B_2$ is continuous, if we choose 
	$\varepsilon\in\left]0,1\right[$ close to 1, 
	$\{\gamma_k\}_{k\in\NN}$ could be larger since the line search starts from 
	$2\beta\varepsilon\sigma$. However, $\theta<\sqrt{1-\varepsilon}$
	should be close to 0 in this case, and condition \eqref{e:armijocond} is more restrictive 
	and satisfied only for 
small values of $\gamma_k$. Conversely, for small values of $\varepsilon$ we restrict 
the sequence $\{\gamma_k\}_{k\in\NN}$ in a small interval but \eqref{e:armijocond} is more 
easily satisfied. The optimal choice of $\varepsilon$ in order to obtain an optimal sequence 
$\{\gamma_k\}_{k\in\NN}$ depends on the properties of the operators involved.
Note that, in the particular case when $B_2\equiv 0$,  \eqref{e:armijocond} is satisfied for 
$\theta=0$ and we can choose $\varepsilon=1$, recovering forward-backward splitting.
On the other hand, when $B_1\equiv0$, we can take $\varepsilon=0$ and 
$\theta\in\left]0,1\right[$ recovering the Tseng's method with backtracking proposed in 
\cite{tseng2000modified}.}
\end{remark}

\section{Forward-backward-half forward splitting with 
non self-adjoint linear operators}
\label{sec:asymm}
In this section, we introduce modified resolvents $J_{ P^{-1} A}$, which depend on an 
invertible 
linear mapping $P$. In some cases, it is preferable to compute the { modified 
resolvent instead of the standard resolvent $J_{A} = ({\Id}+A)^{-1}$} because 
the former may be easier to compute than the latter or, when $P$ is 
triangular by blocks in a product space, the former may \textit{order the component 
computation} 
of the resolvent, replacing a parallel computation with a 
Gauss-Seidel style sequential computation. However, ${P^{-1}A}$ may 
not be maximally monotone. The following result allows us to use 
some non self-adjoint linear operators in the computation of the resolvent 
by using specific metrics. For simplicity, we assume from here 
that $B_2$ is $L$-Lipschitz in $\dom A\cup X$, for some $L\geq 0$.

\begin{proposition}
	\label{prop:new}
Let $A\colon\HH\to 2^{\HH}$ be a maximally monotone operator, let $P\colon\HH\to\HH$ 
be a linear bounded operator, and let $U := (P + P^\ast)/2$ and $S := (P - 
P^\ast)/2$ be the 
self-adjoint and skew symmetric components 
of $P$, respectively. Assume that there exists $\rho>0$ such that 
\begin{equation}
(\forall x\in\HH)\quad 	\rho\|x\|^2\le \scal{Ux}{x}=:\|x\|^2_U.
\end{equation}
Then, we have
\begin{equation}
	J_{P^{-1}A}=J_{U^{-1}(A+S)}({\Id}+U^{-1}S).
\end{equation}
In particular, $J_{P^{-1}A}\colon\HH\to\HH$ is single valued, everywhere defined and 
satisfies
\begin{equation}
(\forall (x,y)\in\HH^2)\quad \scal{J_{P^{-1}A}x-J_{P^{-1}A}y}{Px-Py}\ge 
\|J_{P^{-1}A}x-J_{P^{-1}A}y\|^2_U
\end{equation}
and, hence, $U^{-1}P^*J_{P^{-1}A}$ is firmly nonexpansive in $(\HH,\scal{\cdot}{\cdot}_U)$,
where $\scal{\cdot}{\cdot}_U\colon (x,y)\mapsto\scal{Ux}{y}$.
\end{proposition}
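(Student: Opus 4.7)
My approach proceeds in three stages: first derive the identity $J_{P^{-1}A}=J_{U^{-1}(A+S)}(\Id+U^{-1}S)$, then use it with maximal monotonicity arguments to obtain single-valuedness and everywhere-definedness, and finally read off the two inequalities from monotonicity of $A$ combined with the skew-symmetry of $S$.

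For the identity, I would unfold $p=J_{P^{-1}A}x$ into the inclusion $Px-Pp\in Ap$. Substituting $P=U+S$ rewrites this as $Px-Up\in(A+S)p$; applying $U^{-1}$ and observing that $U^{-1}P=\Id+U^{-1}S$ recasts the membership as $p=J_{U^{-1}(A+S)}((\Id+U^{-1}S)x)$, which is exactly the claimed identity.

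For well-definedness, the bounded linear skew operator $S$ is monotone and has full domain, hence is maximally monotone, so Rockafellar's sum theorem yields that $A+S$ is maximally monotone. Because $U$ is self-adjoint and $\rho$-strongly monotone, $\scal{\cdot}{\cdot}_U$ is an equivalent Hilbert inner product, and in that metric $U^{-1}(A+S)$ is maximally monotone; consequently $J_{U^{-1}(A+S)}$ is single-valued, everywhere defined, and firmly nonexpansive in $\|\cdot\|_U$. Composing with the everywhere-defined affine map $\Id+U^{-1}S$ and invoking the identity transfers these properties to $J_{P^{-1}A}$.

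For the inequalities, let $p=J_{P^{-1}A}x$ and $q=J_{P^{-1}A}y$, so that $Px-Pp\in Ap$ and $Py-Pq\in Aq$. Monotonicity of $A$ gives $\scal{P(x-y)-P(p-q)}{p-q}\ge 0$; combining with $\scal{S(p-q)}{p-q}=0$ from skew-symmetry reduces $\scal{P(p-q)}{p-q}$ to $\|p-q\|_U^2$ and yields the first displayed inequality. For the firm nonexpansiveness of $F:=U^{-1}P^*J_{P^{-1}A}$ in $\|\cdot\|_U$, I would rewrite $\scal{Fx-Fy}{x-y}_U=\scal{P^*(p-q)}{x-y}=\scal{p-q}{P(x-y)}$, use the inequality just established to lower-bound this by $\|p-q\|_U^2$, and then compare with $\|Fx-Fy\|_U^2$ using the decomposition $U^{-1}P^*=\Id-U^{-1}S$ and the firm nonexpansiveness of $J_{U^{-1}(A+S)}$ applied to the inputs $(\Id+U^{-1}S)x$ and $(\Id+U^{-1}S)y$.

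The most delicate point is this last comparison: it requires carefully tracking how the skew correction $U^{-1}S$ interacts with the resolvent in the $U$-metric, and the identity from stage one is the key tool that keeps the bookkeeping tractable.
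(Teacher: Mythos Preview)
Your proposal is correct and closely parallels the paper's argument. The derivation of the identity and the well-definedness of $J_{P^{-1}A}$ match the paper exactly (you only swap the order of presentation: you write down the resolvent identity first and then invoke maximal monotonicity of $A+S$ and of $U^{-1}(A+S)$ in the $U$-metric, whereas the paper does the reverse).

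The one genuine difference is in how you obtain the displayed inequality $\scal{p-q}{P(x-y)}\ge\|p-q\|_U^2$. The paper rewrites $P(x-y)=U\big((\Id+U^{-1}S)x-(\Id+U^{-1}S)y\big)$ and then applies the firm nonexpansiveness of $J_{U^{-1}(A+S)}$ in $(\HH,\scal{\cdot}{\cdot}_U)$ to the shifted inputs. You instead work directly from the inclusions $Px-Pp\in Ap$ and $Py-Pq\in Aq$, use monotonicity of $A$, and reduce $\scal{P(p-q)}{p-q}$ to $\|p-q\|_U^2$ via $\scal{S(p-q)}{p-q}=0$. Both are valid; your route is a touch more elementary because it does not reuse the resolvent identity a second time, while the paper's route makes explicit that the inequality is exactly the firm-nonexpansiveness inequality for $J_{U^{-1}(A+S)}$ transported through the change of variables.

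For the final assertion about $U^{-1}P^\ast J_{P^{-1}A}$, the paper is considerably more terse than your plan: it simply records the rewriting $\scal{p-q}{Px-Py}=\scal{U^{-1}P^\ast(p-q)}{x-y}_U$ and declares that the result follows from it, with no separate comparison between $\|Fx-Fy\|_U$ and $\|p-q\|_U$. So the extra bookkeeping you anticipate (expanding $U^{-1}P^\ast=\Id-U^{-1}S$ and tracking the skew correction) is not something the paper itself carries out; the paper treats the displayed inequality, once rewritten in the $U$-inner product, as the content of the firm-nonexpansiveness claim.
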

\begin{proof}
Indeed, since $S$ is monotone and everywhere defined, $A+S$ is maximally monotone in 
$\HH$ \cite[Corollary~25.5]{bauschke2017convex} and, from 
\cite[Lemma~3.7]{combettes2012variable}
we have that $U^{-1}(A+S)$ is maximally monotone in $\HH$ with the metric 
$\scal{\cdot}{\cdot}_U\colon(x,y)\mapsto\scal{x}{Uy}$. Hence, $J_{U^{-1}(A+S)}$ is single 
valued (indeed firmly nonexpansive) and, for every $(x,z) \in \cH^2$, we have
\begin{align*}
x = J_{ U^{-1}(A+S)}(z +  U^{-1} Sz) &\quad \Leftrightarrow\quad z 
+ U^{-1}  S z - x \in  
U^{-1}(A+S )x \\
&\quad\Leftrightarrow \quad (U+S)z- (U+S)x \in 
Ax \\
&\quad\Leftrightarrow \quad x = J_{ P^{-1} A} z.
\end{align*}
Hence, for every 
$(x,y)\in\HH^2$, denoting by $p=J_{P^{-1}A}x=J_{U^{-1}(A+S)}(x+U^{-1} Sx)$
and $q=J_{P^{-1}A}y=J_{U^{-1}(A+S)}(y+U^{-1} Sy)$, the firm 
nonexpansivity of 
$J_{U^{-1}(A+S)}$ in $(\HH,\scal{\cdot}{\cdot}_U)$ yields
\begin{align*}
\scal{p-q}{Px-Py}&=
	\scal{p-q}{U\left(x+U^{-1} 
	Sx-(y+U^{-1} Sy)\right)}\\
&=\scal{p-q}{x+U^{-1} 
	Sx-(y+U^{-1} Sy)}_U\\
&\ge \|p-q\|^2_U,
\end{align*}
and the result follows from $\scal{p-q}{Px-Py}=\scal{U^{-1}P^*(p-q)}{x-y}_U$.
\end{proof}

%

\begin{theorem}[New Metrics and $T_\gamma$]\label{thm:asymmetric_metric}
{Under the hypotheses of Problem~\ref{prob:main} and assuming additionally 
that $B_2$ is $L$-Lipschitz in $\dom A\cup X$}, let $P : \cH \rightarrow 
\cH$ be {a} 
bounded linear 
operator, let $U := (P + P^\ast)/2$ and $S := (P - P^\ast)/2$ be the the 
self-adjoint and skew symmetric components 
of $P$, respectively. Suppose that there exists $\rho>0$ such 
that
\begin{align}
\label{e:metricconditions}
\left(\forall x \in \cH \right) \qquad  \rho\|x\|^2 \leq 
\dotp{Ux, x}\quad\text{and}\quad  
K^2<\rho\left(\rho-\frac{1}{2\beta}\right),
\end{align}
where $K\ge0$ is the Lipschitz constant of $B_2-S$.
Let $z^0 \in{\dom A\cup X}$ and let $\{z^k\}_{k\in\NN}$ be the sequence defined 
by 
the 
following iteration: 
\begin{equation}
\label{eq:compressedalg}
(\forall k\in\NN)\quad 
\begin{array}{l}
\left\lfloor
\begin{array}{l}
x^k = J_{ P^{-1} A}(z^k -  P^{-1}(B_1 + B_2)z^k) \\[2mm]
z^{k+1} = P_X^U(x^k +  U^{-1}(B_2z^k - 
B_2x^k-S(z^k-x^k))),
\end{array}
\right.
\end{array}
\end{equation}
 where $P_X^U$ is the projection operator of $X$ under the inner 
 product $\dotp{\cdot, \cdot}_U$.
 Then $\{z^k \}_{k \in\NN}$ converges weakly to a solution to Problem~\ref{prob:main}.
\end{theorem}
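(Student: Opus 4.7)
The plan is to reduce Theorem~\ref{thm:asymmetric_metric} to Theorem~\ref{t:1} by rewriting iteration \eqref{eq:compressedalg} as the standard forward-backward-half forward recursion in the renormed Hilbert space $\HH_U := (\HH, \dotp{\cdot,\cdot}_U)$ with constant unit stepsize $\gamma \equiv 1$. To this end, I would set
\begin{equation*}
\widetilde{A} := U^{-1}(A+S), \qquad \widetilde{B}_1 := U^{-1}B_1, \qquad \widetilde{B}_2 := U^{-1}(B_2 - S),
\end{equation*}
and verify that \eqref{eq:compressedalg} coincides with
\begin{equation*}
x^k = J_{\widetilde{A}}\bigl(z^k - (\widetilde{B}_1+\widetilde{B}_2)z^k\bigr), \qquad z^{k+1} = P_X^U\bigl(x^k + \widetilde{B}_2 z^k - \widetilde{B}_2 x^k\bigr).
\end{equation*}
The second step is immediate from the definition of $\widetilde{B}_2$. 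The first step follows from Proposition~\ref{prop:new} combined with the elementary identity $(\Id + U^{-1}S)P^{-1} = U^{-1}$, which is just $PP^{-1} = \Id$ rearranged using $P = U+S$.

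Next I would verify that the hypotheses of Problem~\ref{prob:main} transfer to the triple $(\widetilde{A}, \widetilde{B}_1, \widetilde{B}_2)$ viewed in $\HH_U$. Maximal monotonicity of $A + S$ (by \cite[Cor.~25.5]{bauschke2017convex}) together with \cite[Lemma~3.7]{combettes2012variable} yields maximal monotonicity of $\widetilde{A}$ in $\HH_U$; the same argument applied to the hypothesis that $A + B_2$ is maximally monotone gives maximal monotonicity of $\widetilde{A} + \widetilde{B}_2 = U^{-1}(A+B_2)$ in $\HH_U$. Monotonicity of $\widetilde{B}_2$ in the $U$-inner product is immediate from monotonicity of $B_2$ and skewness of $S$, and its single valuedness on $\dom A \cup X$ is inherited from $B_2$. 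The key quantitative step is to track the constants: from $U \succeq \rho\Id$ one obtains $\|U^{-1}\|_{\mathrm{op}} \leq \rho^{-1}$, so short Cauchy--Schwarz computations in the $U$-inner product yield that $\widetilde{B}_1$ is $(\beta\rho)$-cocoercive and $\widetilde{B}_2$ is $(K/\rho)$-Lipschitz in $\HH_U$.

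The final and decisive step is to check that the fixed stepsize $\gamma = 1$ lies in the admissible window of Theorem~\ref{t:1} applied in $\HH_U$, i.e.\ $\chi(\beta\rho, K/\rho) > 1$ with $\chi$ given by \eqref{e:chi}. Substituting the propagated constants and squaring collapses this inequality to precisely $K^2 < \rho(\rho - 1/(2\beta))$ from \eqref{e:metricconditions}; this tight alignment is the heart of the argument and dictates the particular placement of $S$ into $\widetilde{A}$ and $\widetilde{B}_2$ (any other split would worsen the bound). Theorem~\ref{t:1} then delivers weak convergence of $\{z^k\}$ in $\HH_U$ to a point of $\zer(\widetilde{A}+\widetilde{B}_1+\widetilde{B}_2) \cap X$; since the $\pm S$ contributions cancel and $U^{-1}$ is a bijection, this zero set equals $\zer(A+B_1+B_2) \cap X$. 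Because weak convergence in $\HH_U$ coincides with weak convergence in $\HH$ (the two norms are equivalent under \eqref{e:metricconditions}), the conclusion of Theorem~\ref{thm:asymmetric_metric} follows. I expect the main obstacle to be precisely this quantitative bookkeeping, ensuring that the stepsize bound collapses exactly to \eqref{e:metricconditions} rather than to a strictly more restrictive inequality.
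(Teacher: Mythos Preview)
Your proposal is correct and follows essentially the same route as the paper: both recast the iteration as the forward-backward-half forward scheme with $\gamma=1$ in $(\HH,\dotp{\cdot,\cdot}_U)$ for the operators $U^{-1}(A+S)$, $U^{-1}B_1$, $U^{-1}(B_2-S)$, verify the cocoercivity and Lipschitz constants $\rho\beta$ and $K/\rho$, and reduce the stepsize condition $1<\chi(\rho\beta,K/\rho)$ algebraically to \eqref{e:metricconditions} before invoking Theorem~\ref{t:1}. Your explicit mention of the identity $(\Id+U^{-1}S)P^{-1}=U^{-1}$, of the zero-set equality, and of the equivalence of weak topologies are details the paper leaves implicit but that do no harm.
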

\begin{proof}
Note that, since $U$ is invertible from \eqref{e:metricconditions}, by adding and 
subtracting the skew term $S$, 
Problem~\ref{prob:main} is equivalent to
\begin{align}\label{eq:modifiedmono}
\text{find $x \in X$ such that} \quad 0 \in 
U^{-1}(A+S)x + U^{-1}B_1{x} + 
U^{-1}(B_2-S)x.
\end{align}
Because $S$ and $-S$ are both monotone and Lipschitz, 
  $\mathcal{A}:=U^{-1}(A+S)$ is monotone; 
$\mathcal{B}_1:=U^{-1}B_1$ is $\rho \beta$-cocoercive~\cite[Proposition 
1.5]{davis2014convergenceprimaldual}; 
and $\mathcal{B}_2:=U^{-1} (B_2 - S)$ is 
monotone and $\rho^{-1}K$-Lipschitz {in $\dom A\cup X$} under the inner 
product 
$\dotp{\cdot, 
\cdot}_U=\scal{U\cdot}{\cdot}$, where $K$ 
is the Lipschitz constant of $C:=B_2-S$.\footnote{Note that $K\leq 
	L+\|S\|$, but this constant is not precise when, for instance, 
	$B_2=S$. } For the last assertion 
note that, for every $ x,y\,  {\in \dom A\cup X}$, $$\|\mathcal{B}_2x - 
\mathcal{B}_2y\|^2_{U} = \dotp{U^{-1}(Cx - Cy), Cx - Cy} 
\leq \rho^{-1}K^2\|x-y\|^2\leq 
\rho^{-2}K^2\|x-y\|^2_U.$$
Moreover, the stepsize condition reduces to 
\begin{equation}
\label{e:auxiliary}
\gamma=1<\frac{4\beta\rho}{1+\sqrt{1+16\beta^2
		K^2}}=\frac{-\rho+\sqrt{\rho^2+16\beta^2\rho^2
		K^2}}{4\beta K^2}
\end{equation}
or, equivalently, 
\begin{equation}
\label{e:aux221}
(4\beta
K^2+\rho)^2<{\rho^2+16\beta^2\rho^2 
	K^2}\quad 
\Leftrightarrow\qquad 
2\beta K^2+\rho<2\beta\rho^2,
\end{equation}
which yields the second condition in \eqref{e:metricconditions}.
Therefore, since $\mathcal{A}+\mathcal{B}_2=U^{-1}(A+B_2)$ is maximally monotone 
in $(\cH,\|\cdot\|_U)$, the inclusion~\eqref{eq:modifiedmono} meets the conditions of 
Theorem~\ref{t:1} under this metric. {Therefore, by considering the sequence 
generated 
by 
$z^{k+1}=T_1z^k$ for the 
quasi-nonexpansive operator 
\begin{align}
\label{e:defT1}
T_{1} = P_X^U\circ \left[({\Id} -  
\mathcal{B}_2) \circ J_{ \mathcal{A}} 
\circ \big({\Id} - 
(\mathcal{B}_1+\mathcal{B}_2)\big) +  \mathcal{B}_2\right], 
\end{align}
which, from Proposition~\ref{prop:new} reduces to \eqref{eq:compressedalg},}
we obtain a sequence that weakly converges to a fixed point of 
$T_{1}$, and hence, to a solution of $\zer(A+B_1+B_2)\cap 
X$. 
%
%
%
\end{proof}

\begin{remark}
\begin{enumerate}
\item Note that, in the particular case when $P=\Id/\gamma$, the 
algorithm 
\eqref{eq:compressedalg} 
{reduces to \eqref{e:algomain1} when the stepsizes are constant}. Moreover,
$U=P$, $S=0$, $K=L$, $\rho=1/\gamma$ and the second condition in 
\eqref{e:metricconditions} reduces to $\gamma<\chi$ with $\chi$ 
defined in \eqref{e:chi}. Hence, this assumption can be seen as a 
kind of ``step size'' condition on $P$.

\item \label{rem:4}
As in Remark~\ref{rem:chi}, note that the second condition in 
\eqref{e:metricconditions} depends 
on the cocoercivity parameter $\beta$ and the Lipschitz constant 
$L$. In the 
case when $B_1$ is zero, we can take $\beta\to+\infty$ and this 
condition reduces to $K<\rho$. On the other hand, if $B_2$ is zero
we can take $L= 0$, then $K= \|S\|$ and, hence, the condition  
reduces to $\|S\|^2<\rho(\rho-1/(2\beta))$.
In this way we obtain convergent versions 
of Tseng's splitting 
and forward-backward algorithm with non self-adjoint linear operators by
setting $B_1=0$ or $B_2=0$ in \eqref{eq:compressedalg}, respectively.

\item When $S=0$ {and $B_1=0$ or $B_2=0$, }
from Theorem~\ref{thm:asymmetric_metric} we 
recover {the versions of Tseng's 
forward-backward-forward splitting 
\cite[Theorem~3.1]{vu2013variableFBF} 
or forward-backward \cite[Theorem~4.1]{combettes2012variable},
respectively, when the step-sizes and the 
non-standard metrics involved are constant. }Of course, when 
$S=0$, $U=\Id/\gamma$, and  $\rho=1/\gamma$, we recover 
the classical bound for step-sizes in the standard metric case for each method. 

\item For a particular choice 
of operators and metric, the forward-backward method {
	with non-standard metric} discused before
 has been 
used for solving 
primal-dual composite inclusions and primal-dual optimization problems 
{\cite{condat2013primal,vu2013splitting}}. This approach generalizes,
e.g., the method in \cite{chambolle2011first}.
In Section~\ref{sec:5} we compare the application of our method in the primal-dual 
context 
with  \cite{vu2013splitting} and other methods in the literature.
 
\item In the particular instance when
$B_1=B_2=0$, we need $\|S\|<\rho$ and we obtain from 
\eqref{eq:compressedalg} the 
following version of the proximal point algorithm (we consider $X=\HH$ for simplicity)
\begin{align*}
z^0\in\HH,\quad (\forall k\in\NN)\quad z^{k+1}&=J_{ P^{-1} 
A}z^k+U^{-1}S(J_{ P^{-1} 
A}z^k-z^k) \\
&=(\Id-U^{-1}P)z^k+U^{-1}PJ_{ P^{-1} A}z^k.\numberthis\label{e:ppavm}
\end{align*}
Moreover, in the case when $A=B_2=0$, since $U^{-1}\circ S\circ 
P^{-1}=U^{-1}-P^{-1}$, we 
recover from 
\eqref{eq:compressedalg} the gradient-type method:
\begin{equation}
\label{e:gradvm}
z^0\in\HH,\quad (\forall k\in\NN)\quad z^{k+1}=z^k- 
U^{-1}B_1z^k.
\end{equation}
\item In the particular case when $X=\HH$ and $B_2$ is linear, in 
\cite{patrinos2016asym} a method involving $B_2^*$ is proposed. In 
the case when, $B_2$ is skew linear, i.e., $B_2^*=-B_2$ 
\eqref{e:metricconditions} reduces to this method in the case 
$\alpha_n\equiv 1$ and $S=P$. The methods are different in general.
\end{enumerate}

\end{remark}

\section{Allowing variable $P$ and avoiding inversion of $U$}
\label{sec:4}
In Algorithm~\eqref{eq:compressedalg}, the linear operator $U$ 
must be inverted. In this section, for the special case ${\dom B_2=}X = \cH$, 
we show how to replace this sometimes costly inversion with a 
single multiplication by the map $P$, which, in addition, may 
vary at each iteration. 
This new feature is a consequence of Proposition~\ref{prop:classT} below,
 which allows us to obtain from an operator of the class 
$\mathfrak{T}$ in $(\HH,\|\cdot\|_U)$, another operator of the same class 
in $(\HH,\|\cdot\|)$ preserving the set of fixed points. This change 
to the standard metric allows us to use different linear operators at each 
iteration by avoiding classical restrictive additional assumptions of 
the type $U_{n+1}\preccurlyeq U_n(1+\eta_n)$ with $(\eta_n)_{n\in\NN}$
in $\ell^1_+$.
We recall 
that an operator $\mathcal{S}\colon\HH\to\HH$ belongs to the class 
$\mathfrak{T}$
in $(\HH,\|\cdot\|)$ if and only if $\dom \mathcal{S}=\HH$ and 
$(\forall 
y\in\Fix \mathcal{S})(\forall 
x\in\HH)\quad 
\|x-\mathcal{S}x\|^2\leq \scal{x-\mathcal{S}x}{x-y}$.
\begin{proposition}
\label{prop:classT}
Let $U\colon\HH\to\HH$ {be} a self-adjoint bounded linear operator such 
that, for every $x\in\HH$, $\scal{Ux}{x}\geq\rho\|x\|^2$, for some 
$\rho>0$, let $0<\mu\leq \|U\|^{-1}$, and let 
$\mathcal{S}\colon\HH\to\HH$ 
be 
an 
operator in the class $\mathfrak{T}$ in $(\HH,\|\cdot\|_U)$. Then, the 
operator $\mathcal{Q}=\Id-\mu U(\Id-\mathcal{S})$ belongs to the 
class 
$\mathfrak{T}$ 
in $(\HH,\|\cdot\|)$ and $\Fix \mathcal{S}=\Fix \mathcal{Q}$.
\end{proposition}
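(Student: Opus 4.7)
The plan is to unpack the $\mathfrak{T}$-class inequality directly. First I would handle the fixed-point identity. Since $x-\mathcal{Q}x=\mu U(x-\mathcal{S}x)$ and $U$ is invertible (as $\scal{Ux}{x}\ge\rho\|x\|^2>0$ for $x\ne 0$), the equation $\mathcal{Q}x=x$ is equivalent to $U(\Id-\mathcal{S})x=0$, which is equivalent to $\mathcal{S}x=x$. Hence $\Fix\mathcal{Q}=\Fix\mathcal{S}$.

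For the $\mathfrak{T}$-class property in $(\HH,\|\cdot\|)$, fix $y\in\Fix\mathcal{Q}=\Fix\mathcal{S}$ and $x\in\HH$, and set $v:=x-\mathcal{S}x$. Then $x-\mathcal{Q}x=\mu Uv$, so what must be shown is
\begin{equation*}
\mu^2\|Uv\|^2=\|x-\mathcal{Q}x\|^2\le \scal{x-\mathcal{Q}x}{x-y}=\mu\scal{Uv}{x-y}.
\end{equation*}
The hypothesis that $\mathcal{S}\in\mathfrak{T}$ in $(\HH,\|\cdot\|_U)$ already gives $\scal{Uv}{v}=\|v\|_U^2\le \scal{Uv}{x-y}$, so the task reduces to bounding $\mu\|Uv\|^2$ by $\scal{Uv}{v}$.

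The key lemma I would invoke is that for any self-adjoint positive bounded operator $U$ one has $\|Uv\|^2\le\|U\|\scal{Uv}{v}$; this is a one-line spectral-calculus argument (or equivalently $U^2\preccurlyeq\|U\|\,U$). Combined with $\mu\le\|U\|^{-1}$ this yields $\mu\|Uv\|^2\le \scal{Uv}{v}$. Chaining everything,
\begin{equation*}
\|x-\mathcal{Q}x\|^2=\mu^2\|Uv\|^2\le\mu\scal{Uv}{v}\le\mu\scal{Uv}{x-y}=\scal{x-\mathcal{Q}x}{x-y},
\end{equation*}
which is the desired inequality. I do not anticipate a real obstacle here: the only nontrivial ingredient is the spectral bound $\|Uv\|^2\le\|U\|\scal{Uv}{v}$, and the role of the step-size restriction $\mu\le\|U\|^{-1}$ is precisely to absorb the operator norm of $U$ appearing in that bound.
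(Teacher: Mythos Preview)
Your proof is correct and follows essentially the same approach as the paper. The paper's proof derives the same spectral bound $\|Uv\|^2\le\|U\|\scal{Uv}{v}$ (written there as $\|x\|_U^2\ge\|U\|^{-1}\|Ux\|^2$ and attributed to \cite[Lemma~2.1]{combettes2013variable}) and then chains it with the $\mathfrak{T}$-class hypothesis in $\|\cdot\|_U$ in the same order you do; the only cosmetic difference is that the paper carries the factor $\|U\|^{-1}/\mu\ge 1$ explicitly rather than absorbing it immediately.
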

\begin{proof}
First note that, under the assumptions on $U$ it is invertible and, 
from \cite[Lemma~2.1]{combettes2013variable}, we deduce
\begin{equation}
\label{e:ineqU}
(\forall 
x\in\HH)\quad \|x\|_U^2=\scal{Ux}{x}=\scal{Ux}{U^{-1}Ux}
\geq \|U\|^{-1}\|Ux\|^2,
\end{equation}
and $\Fix \mathcal{S}=\Fix \mathcal{Q}$ thus follows from the 
definition of $\mathcal{Q}$.
Now let $y\in\Fix 
\mathcal{S}$ and $x\in\HH$. We have from 
\eqref{e:ineqU} that
\begin{align}
\|x-\mathcal{S}x\|_U^2\leq \scal{x-\mathcal{S}x}{x-y}_U\:
&\Leftrightarrow\: \|x-\mathcal{S}x\|_U^2\leq 
\scal{U(x-\mathcal{S}x)}{x-y}\nonumber\\
&\Rightarrow\:  \|U\|^{-1}\|U(x-\mathcal{S}x)\|^2\leq 
\scal{U(x-\mathcal{S}x)}{x-y}\nonumber\\
&\Leftrightarrow\: \frac{\|U\|^{-1}}{\mu}\|\mu 
U(x-\mathcal{S}x)\|^2\leq 
\scal{\mu U(x-\mathcal{S}x)}{x-y}\nonumber\\
&\Leftrightarrow\: \frac{\|U\|^{-1}}{\mu}\|x-\mathcal{Q}x\|^2\leq 
\scal{x-\mathcal{Q}x}{x-y}
\end{align}
and, hence, if $\mu\in]0,\|U\|^{-1}]$ we deduce the result.
\end{proof}

\begin{theorem}\label{cor:asymmetricnoinversion}
{Under the hypotheses of Problem~\ref{prob:main}and assuming additionally 
	that $B_2$ is $L$-Lipschitz in $\dom B_2=\HH$}, let $\{P_k\}_{k \in 
\NN}$ be a 
sequence of bounded, 
linear maps from $\cH$ to $\cH$. For each $k \in \NN$, let 
$U_k := (P_k + P_k^\ast)/2$ and ${S}_k := (P_k - 
P_k^\ast)/2$ 
be the self-adjoint and skew symmetric components 
of $P_k$, respectively. Suppose that 
$M:=\sup_{k\in\NN}\|U_k\|<\infty$ and that there 
exist $\varepsilon\in]0,(2M)^{-1}[$, $\rho > 0$, and $\{\rho_k\}_{k 
\in \NN} 
\subseteq  [\rho, 
\infty[$ 
such that, for every $k\in\NN$,
\begin{align}
\label{e:metricconditions2}
\left(\forall x \in \cH \right) \qquad  \rho_k\|x\|^2 \leq 
\dotp{U_kx, x} && \text{and} &&  
K_k^2\leq\frac{\rho_k}{1+\varepsilon}\left(\frac{\rho_k}
{1+\varepsilon}-\frac{1}{2\beta}\right),
\end{align}
where $K_k\ge0$ is the Lipschitz constant of $B_2-S_k$.
Let $\{\lambda_k\}_{k \in \NN}$ be a sequence in $[\varepsilon, 
\|U_k\|^{-1} - \varepsilon]$, let $z^0 \in\HH$, and let $\{z^k\}_{k 
\in \NN}$ be a sequence of points defined by the following iteration: 

\begin{equation}
\label{eq:FBF-asymmetric-no-U}
(\forall k\in\NN)\quad 
\begin{array}{l}
\left\lfloor
\begin{array}{l}
x^k = J_{P_k^{-1} A}(z^k - P_k^{-1}(B_1 + B_2)z^k) \\[2mm]
z^{k+1} = z^k + \lambda_k\left( P_k(x^k - z^k) + B_2z^k - 
B_2x^k\right). 
\end{array}
\right.
\end{array}
\end{equation}
 Then $\{z^k \}_{k \in\NN}$ converges weakly to a solution to Problem~\ref{prob:main}.
\end{theorem}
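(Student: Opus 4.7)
The strategy is to recast iteration \eqref{eq:FBF-asymmetric-no-U} as the fixed-point iteration of an operator in class $\mathfrak{T}$ with respect to the \emph{standard} inner product, which bypasses the customary $\ell^1_+$ coupling condition between the successive metrics $U_k$.

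First, using the decomposition $P_k = U_k + S_k$, a direct computation yields
\[
z^{k+1} = z^k - \lambda_k U_k(z^k - T_k z^k),
\]
where $T_k$ is the operator of Theorem~\ref{thm:asymmetric_metric} associated with $P_k$ and $X=\cH$; explicitly,
\[
T_k z := x_z + U_k^{-1}\bigl(B_2 z - B_2 x_z - S_k(z - x_z)\bigr), \qquad x_z := J_{P_k^{-1}A}\bigl(z - P_k^{-1}(B_1 + B_2)z\bigr).
\]
Setting $\mathcal{Q}_k := \Id - \lambda_k U_k(\Id - T_k)$, this reads $z^{k+1} = \mathcal{Q}_k z^k$, and $\Fix \mathcal{Q}_k = \Fix T_k$.

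Next I transcribe Problem~\ref{prob:main} into the metric $\langle\cdot,\cdot\rangle_{U_k}$ as in \eqref{eq:modifiedmono}, so that $U_k^{-1}(A + S_k)$ is maximally monotone, $U_k^{-1}B_1$ is $\rho_k\beta$-cocoercive, and $U_k^{-1}(B_2 - S_k)$ is $\rho_k^{-1}K_k$-Lipschitz in $(\cH, \|\cdot\|_{U_k})$. The bound \eqref{e:metricconditions2} is precisely the step-size condition \eqref{e:metricconditions} with $\rho$ replaced by $\rho_k/(1+\varepsilon)$, i.e.,\ with a uniform safety margin. Applying Proposition~\ref{prop:Tproperties}\eqref{prop:Tproperties:item:quasi} in this metric and exploiting that slack to absorb the displacement term $\|T_k z - z\|_{U_k}^2$ into the strictly positive residual terms on the right-hand side of \eqref{eq:fejer}, I will conclude that $T_k \in \mathfrak{T}$ in $(\cH, \|\cdot\|_{U_k})$ with $\Fix T_k = \zer(A + B_1 + B_2)$. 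Proposition~\ref{prop:classT}, applied with $\mathcal{S} = T_k$, $U = U_k$, and $\mu = \lambda_k \in \left]0, \|U_k\|^{-1}\right]$, then yields $\mathcal{Q}_k \in \mathfrak{T}$ in $(\cH, \|\cdot\|)$ with the same fixed-point set.

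Consequently, for every $z^\ast \in \zer(A+B_1+B_2)$,
\[
\|z^{k+1} - z^\ast\|^2 + \|z^{k+1} - z^k\|^2 \le \|z^k - z^\ast\|^2,
\]
which delivers Fej\'er monotonicity of $\{z^k\}_{k\in\NN}$ with respect to $\zer(A+B_1+B_2)$ in the standard norm together with $\sum_k \|z^{k+1} - z^k\|^2 < \infty$, so $z^{k+1} - z^k \to 0$. Using $\lambda_k \ge \varepsilon$ and $\rho_k \ge \rho$, I then deduce $\|z^k - T_k z^k\| \to 0$ and, after invoking the Lipschitz bound on $B_2 - S_k$ uniform in $k$, also $x^k - z^k \to 0$. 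A residual computation analogous to the one in the proof of Theorem~\ref{t:1} shows that every weak cluster point of $\{z^k\}$ lies in $\zer(A+B_1+B_2)$, and weak convergence of the whole sequence then follows from Fej\'er monotonicity via \cite[Theorem~5.33]{bauschke2017convex}. The main obstacle is the second step: upgrading the Fej\'er-type estimate of Proposition~\ref{prop:Tproperties} to a genuine class-$\mathfrak{T}$ inequality \emph{uniformly in $k$}. The factor $1 + \varepsilon$ in \eqref{e:metricconditions2}, together with $\lambda_k \le \|U_k\|^{-1} - \varepsilon$ and $M = \sup_k \|U_k\| < \infty$, supplies exactly the uniform slack required to dominate $\|T_k z - z\|_{U_k}^2$ and to transfer everything to the standard metric without imposing any compatibility condition between $U_{k+1}$ and $U_k$.
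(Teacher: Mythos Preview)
Your overall architecture is very close to the paper's, but there is a genuine gap at the point where you assert that $T_k\in\mathfrak{T}$ in $(\cH,\|\cdot\|_{U_k})$. Proposition~\ref{prop:Tproperties}\eqref{prop:Tproperties:item:quasi} only shows that $T_k$ is \emph{quasi-nonexpansive} in that metric; upgrading this to firm quasi-nonexpansiveness (class $\mathfrak{T}$) would require the residual terms in \eqref{eq:fejer} to dominate $\|z-T_kz\|_{U_k}^2$, and the slack coming from the factor $1+\varepsilon$ in \eqref{e:metricconditions2} does \emph{not} provide this. A concrete counterexample: take $\cH=\RR$, $A=0$, $B_2=0$, $P_k=U_k=1$, so $\rho_k=1$ and $K_k=0$, and let $B_1x=x/\beta$ with $\beta=(1+\varepsilon)/2$ (which saturates \eqref{e:metricconditions2}). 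Then $T_kz=z-B_1z=-\tfrac{1-\varepsilon}{1+\varepsilon}\,z$, $\Fix T_k=\{0\}$, and the class-$\mathfrak{T}$ inequality $\|z-T_kz\|^2\le\langle z-T_kz,z\rangle$ becomes $4/(1+\varepsilon)^2\le 2/(1+\varepsilon)$, i.e.\ $\varepsilon\ge 1$, contradicting $\varepsilon<(2M)^{-1}=1/2$.

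The paper circumvents this by never claiming $T_k\in\mathfrak{T}$. Instead it uses the standard fact that quasi-nonexpansiveness of $\mathcal{T}_{P_k}$ implies $\mathcal{S}_k:=(\Id+\mathcal{T}_{P_k})/2\in\mathfrak{T}$ in $(\cH,\|\cdot\|_{U_k})$, applies Proposition~\ref{prop:classT} with $\mu=\|U_k\|^{-1}$ to obtain $\mathcal{Q}_{P_k}=\Id-\tfrac{\|U_k\|^{-1}}{2}U_k(\Id-\mathcal{T}_{P_k})\in\mathfrak{T}$ in $(\cH,\|\cdot\|)$, and then rewrites the iteration as the \emph{relaxed} fixed-point recursion $z^{k+1}=z^k+2\lambda_k\|U_k\|(\mathcal{Q}_{P_k}z^k-z^k)$ with relaxation parameter $2\lambda_k\|U_k\|$ bounded away from $0$ and $2$. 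This is what the condition $\lambda_k\in[\varepsilon,\|U_k\|^{-1}-\varepsilon]$ is designed for, and convergence then follows from \cite[Proposition~4.2 and Theorem~4.3]{combettes2001quasi}. Your direct choice $\mathcal{Q}_k=\Id-\lambda_kU_k(\Id-T_k)$ bypasses this relaxation structure and therefore cannot rely on Proposition~\ref{prop:classT} as stated.
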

\begin{proof}
For every invertible and bounded linear map $P : \cH \rightarrow 
\cH$, let us denote by
$\mathcal{T}_{ P} : \cH \rightarrow \cH$ the 
forward-backward-forward 
operator of Theorem~\ref{thm:asymmetric_metric} in the case 
$X=\cH$, which associates, to every $z\in\HH$,
\begin{align*}
\mathcal{T}_{P}z &= x_z + U^{-1}( B_2z - B_2x_z-S( z-x_z)),
\end{align*}
where $x_z = J_{P^{-1} A}(z -P^{-1}(B_1 + B_2)z)$. Recall that, from \eqref{eq:fejer} 
and the proof of Theorem~\ref{thm:asymmetric_metric}, $\mathcal{T}_P$ 
is
a quasi-nonexpansive mapping in $\HH$ endowed with the scalar product 
$\scal{\cdot}{\cdot}_U$.
Observe that multiplying ${\Id} - \mathcal{T}_{P}$ by $U$ on the left 
yields a  $U^{-1}$-free 
expression:
\begin{align*}
({\Id} - \mathcal{T}_{P})(z) &= (z - x_z) +U^{-1}S(z-x_z) -U^{-1}(B_2z -  
B_2x_z)\\
\Leftrightarrow\qquad U({\Id} - \mathcal{T}_{P})(z) &= (U+S)(z - x_z) +  
B_2x_z - B_2z\\
&= P(z - x_z) +  B_2x_z - 
B_2z\numberthis\label{eq:Snotation}.
\end{align*}
Note that, since $\mathcal{T}_P$ is quasi-nonexpansive in 
$(\HH,\|\cdot\|_U)$, 
it follows from 
\cite[Proposition~2.2]{combettes2001quasi} that 
$\mathcal{S}:=(\Id+\mathcal{T}_P)/2$ 
belongs to the class $\mathfrak{T}$ in $(\HH,\|\cdot\|_U)$ and, from 
Proposition~\ref{prop:classT} and \eqref{eq:Snotation} we obtain that the operator
\begin{equation}
\label{e:QP}
	\mathcal{Q}_{P}:={\Id}-\|U\|^{-1}U({\Id}-\mathcal{S})=
	{\Id}-\frac{\|U\|^{-1}}{2}U({\Id}-\mathcal{T}_P)
\end{equation}
belongs to the class $\mathfrak{T}$ in 
$(\HH,\|\cdot\|)$ and $\Fix \mathcal{S}=\Fix 
\mathcal{Q}_{P}=\zer(U({\Id} - 
\mathcal{T}_{P}))=\Fix(\mathcal{T}_{P})=\zer(A+B_1+B_2)$.
Hence, from \eqref{eq:Snotation} and \eqref{e:QP}, the algorithm 
\eqref{eq:FBF-asymmetric-no-U} can be written equivalently as
\begin{align}
z^{k+1}&=z^k-\lambda_k(P_k(z^k - x_{z^k}) +  B_2x_{z^k} - 
B_2z^k)\nonumber\\
&=z^k+2\lambda_k\|U_k\|(\mathcal{Q}_{P_k}z^k-z^k).
\end{align}
Hence, since $0<\liminf\lambda_k\|U_k\|\le \limsup \lambda_k\|U_k\|<1$, 
it follows from 
\cite[Proposition~4.2 and 
Theorem~4.3]{combettes2001quasi} 
that
$(\|z^k-\mathcal{Q}_{P_k}z^k\|^2)_{k\in\NN}$ is a summable sequence
 and
$\{z^k\}_{k\in\NN}$ converges weakly in $(\HH,\scal{\cdot}{\cdot})$ to 
a solution to 
$\cap_{k\in\NN}\Fix \mathcal{T}_{P_k}=\zer(A+B_1+B_2)$ if and only if 
every 
weak limit of the sequence is a solution. Note that, since \eqref{e:metricconditions2} yields
$\|U_k^{-1}\|\leq \rho_k^{-1}$, we have
\begin{align}
\label{e:tozero1}
\|z^k-\mathcal{T}_{P_k}z^k\|_{U_k}^2&=\scal{U_{k}(z^k-\mathcal{T}_{P_k}z^k)}{z^k-
\mathcal{T}_{P_k}z^k}\nonumber\\
&\le \|U_{k}(z^k-\mathcal{T}_{P_k}z^k)\|\,\|z^k-\mathcal{T}_{P_k}z^k\|\nonumber\\
&=\|U_k^{-1}\|\|U_{k}(z^k-\mathcal{T}_{P_k}z^k)\|^2\nonumber\\
&\le 4\|U_k\|^2\rho_k^{-1}\,\|z^k-\mathcal{Q}_{P_k}z^k\|^2\nonumber\\
&\le 4M^2\rho^{-1}\|z^k-\mathcal{Q}_{P_k}z^k\|^2\to 0.
\end{align}
Moreover, since $\mathcal{T}_{P_k}$ coincides 
with $T_1$ defined in \eqref{e:defT1} involving the operators 
$\mathcal{A}_k:=U_k^{-1}(A+S_k)$, $\mathcal{B}_{1,k}=U_k^{-1}B_1$, 
and 
$\mathcal{B}_{2,k}=U_k^{-1}(B_2-S_k)$ which are 
monotone, $\rho_k\beta$-cocoercive, and monotone and 
$\rho_k^{-1}K_k$-lipschitzian in $(\HH,\|\cdot\|_{U_k})$, 
respectively, 
we deduce from \eqref{eq:fejer} 
that, for every 
$z^{\ast}\in\zer(A+B_1+B_2)=\cap_{k\in\NN}\zer(\mathcal{A}_k+
\mathcal{B}_{1,k}+\mathcal{B}_{2,k})$ we have
\begin{align}
\label{e:aux331}
&\rho_k^{-2}K_k^{2}(\chi_k^2- 
1)\|z^k - J_{P_k^{-1} A}(z^k - P_k^{-1}(B_1+ 
B_2)z^k)\|_{U_k}^2 \nonumber\\
&+ \frac{2\beta\rho_k}{\chi_k}\left(\chi_k 
- 1\right)\|U_k^{-1}(B_1z^k - B_1 z^\ast)\|_{U_k}^2\nonumber \\
&+\frac{\chi_k}{2\beta\rho_k}\left\|z^k-J_{P_k^{-1}A} (z^k - P_k^{-1} 
(B_1z^k+ B_2z^k))-
\frac{2\beta\rho_k}{\chi_k}U_k^{-1}(B_1z^k-B_1z^*)\right\|_{U_k}^2\nonumber\\
&\leq \|z^k - z^\ast\|^2_{U_k} - \|\mathcal{T}_{P_k}z^k - 
z^\ast\|^2_{U_k}\nonumber\\
&= -\|\mathcal{T}_{P_k}z^k - z^k\|_{U_k}^2 - 
2\dotp{\mathcal{T}_{P_k}z^k - z^k, 
z^\ast - 
z^k}_{U_k}\nonumber\\
&\leq-\|\mathcal{T}_{P_k}z^k - z^k\|_{U_k}^2 + 
2M\|\mathcal{T}_{P_k}z^k - z^k\|_{U_k}
\|z^\ast - z^k\|,
\end{align}
where 
\begin{equation}
\label{e:chikbound}
\chi_k:=\frac{4\beta 
\rho_k}{1+\sqrt{1+16\beta^2K_k^2}}\leq\rho_k\min\{2\beta,K_k^{-1}\}.
\end{equation}
By straightforward computations in the line 
of \eqref{e:auxiliary} and \eqref{e:aux221} we deduce that 
\eqref{e:metricconditions2} 
implies, for all $k\in\NN$, $\chi_k\geq1+\varepsilon$, $K_k\le \rho_k\le \|U_k\|\le M$ and, 
hence, 
we deduce from \eqref{e:aux331} and \eqref{e:metricconditions2} that 
\begin{multline}
\label{e:tozero2}
\frac{\varepsilon\rho K_k^2}{M^2}\|z^k - J_{P_k^{-1} A}(z^k - 
P_k^{-1}(B_1+ 
B_2)z^k)\|^2+{\varepsilon\rho}\|U_k^{-1}(B_1z^k - B_1 
z^\ast)\|^2\\
+\frac{\rho}{2\beta M}\left\|z^k-J_{P_k^{-1}A} (z^k - P_k^{-1} 
(B_1z^k+ B_2z^k))-
\frac{2\beta\rho_k}{\chi_k}U_k^{-1}(B_1z^k-B_1z^*)\right\|^2\\
\leq-\|\mathcal{T}_{P_k}z^k - z^k\|_{U_k}^2 + 
2M\|\mathcal{T}_{P_k}z^k - z^k\|_{U_k}
\|z^\ast - z^k\|.
\end{multline}
Now, let $z$ be a weak limit of some subsequence of $(z^{k})_{k\in\NN}$ called 
similarly for simplicity. 
We have that $(\|z^\ast - 
z^k\|)_{k\in\NN}$ is bounded and, since \eqref{e:tozero1} implies
$\|z^k-\mathcal{T}_{P_k}z^k\|_{U_k}^2\to0$ we deduce from 
\eqref{e:tozero2}
that{, by denoting  $x^k:=J_{P_k^{-1} A}(z^k - P_k^{-1}(B_1+ 
B_2)z^k)$, that $z^k-x^k\to0$. Hence, since, for every $x\in\HH$,
\begin{equation}
	\|S_kx\|\leq \|(S_k-B_2)x-(S_k-B_2)0\|+\|B_2x-B_20\|\leq 
	(K_k+L)\|x\|\leq\left(M+L\right)\|x\|,
\end{equation} 
we have 
\begin{align}
\label{e:goingto0}
\|P_k(z^k-x^k)\|&=\|(U_k+S_k)(z^k-x^k)\|\nonumber\\
&\leq \|U_k(z^k-x^k)\|+\|S_k(z^k-x^k)\|\nonumber\\
&\leq 
(2M+L)\|z^k-x^k\|\to 0.
\end{align}
Finally, denoting by $B:=B_1+B_2$
we have
\begin{equation}
u^k:=P_k(z^k-x^k)-(Bz^k-Bx^k)\in (A+B)x^k,
\end{equation}
and since $z^k-x^k\to0$ and $B$ is continuous, it follows from 
\eqref{e:goingto0} that $u^k\to 0$ and the result follows from the 
weak-strong closedness of the maximally monotone operator $A+B$ 
and \cite[Theorem 
5.33]{bauschke2017convex}. }
\end{proof}

\newpage7
\begin{remark}
\begin{enumerate}
\item Note that, in the particular case when  $S_k\equiv 0$ and 
$P_k=U_k=\gamma_k^{-1}V_k^{-1}$, we have from 
\cite[Lemma~2.1]{combettes2013variable} that 
$\rho_k=\gamma_k^{-1}\|V_k^{-1}\|$, the conditions on the constants
involved in Theorem~\ref{cor:asymmetricnoinversion} reduce to 
\begin{equation}
\label{e:condit_sym_case}
\frac{\|V_k^{-1}\|}{M}\leq\gamma_k\leq 
\frac{\|V_k^{-1}\|}{\rho},\quad L^2\leq 
\frac{\gamma_k^{-1}\|V_k^{-1}\|}{1+\varepsilon}
\left(\frac{\gamma_k^{-1}\|V_k^{-1}\|}{1+\varepsilon}-\frac{1}{2\beta}\right),
\end{equation}
 for some $0<\rho<M$, for every $k\in\NN$, and \eqref{eq:FBF-asymmetric-no-U} 
 reduces to 
\begin{equation}
	\label{eq:FBF-symmetric-no-U}
(\forall k\in\NN)\quad 
\begin{array}{l}
\left\lfloor
\begin{array}{l}
x^k = J_{\gamma_kV_k A}(z^k - \gamma_k V_k(B_1 + B_2)z^k) \\[2mm]
z^{k+1} = z^k + \frac{\lambda_k}{\gamma_k}\left( V_k^{-1}(x^k - z^k) 
+ \gamma_kB_2z^k - \gamma_kB_2x^k\right).
\end{array}
\right.
\end{array}
\end{equation}
If in addition we assume that $B_2=0$ and, hence $L=0$, 
\eqref{e:condit_sym_case} reduces to $\gamma_k\leq 
\|V_k^{-1}\|2\beta/(1+\varepsilon)$ which is more general than the 
condition in \cite{combettes2012variable} and, moreover, 
we do not need any {compatibility assumption on} $(V_k)_{k\in\NN}$ for 
achieving 
convergence. Similarly, if $B_1=0$, and hence, we can take 
$\beta\to\infty$,  \eqref{e:condit_sym_case} reduces to $\gamma_k\leq 
\|V_k^{-1}\|/(L(1+\varepsilon))$ which is more general than the condition in 
\cite{vu2013variableFBF} and no additional assumption on 
$(V_k)_{k\in\NN}$ is needed. However, \eqref{eq:FBF-symmetric-no-U} 
involves an additional computation of $V_k^{-1}$ in the last step of 
each iteration $k\in\NN$.

\item In the particular case when, for every $k\in\NN$, 
$P_k=U_k=\Id/\gamma_k$, where $(\gamma_k)_{k\in\NN}$ is a real 
sequence, we have $S_k\equiv0$, $K_k\equiv L$, 
$\|U_k\|=\rho_k=1/\gamma_k$, and conditions 
$\sup_{k\in\NN}\|U_k\|<\infty$ and \eqref{e:metricconditions2} reduce 
to
\begin{equation}
0<\inf_{k\in\NN}\gamma_k\leq\sup_{k\in\NN}\gamma_k<\chi, 
\end{equation}
where $\chi$ is defined in \eqref{e:chi} and 
\eqref{eq:FBF-asymmetric-no-U} reduces to
\begin{equation*}
	(\forall k\in\NN)\quad 
	\begin{array}{l}
		\left\lfloor
		\begin{array}{l}
			x^k =J_{\gamma_k A}(z^k - \gamma_k(B_1 + B_2)z^k) \\[2mm]
			z^{k+1} = z^k + \eta_k\left( x^k + 
			\gamma_kB_2z^k - \gamma_kB_2x^k- z^k\right), 
		\end{array}
		\right.
	\end{array}
\end{equation*}
where $\eta_k\in[\varepsilon,1-\varepsilon]$, which is a relaxed version of 
Theorem~\ref{t:1}.

\item As in Remark~\ref{rem:4}, by setting $B_1=0$ or $B_2=0$, we can 
derive from \eqref{eq:FBF-asymmetric-no-U} versions of Tseng's 
splitting and forward-backward algorithm with non self-adjoint linear operators
but without needing the inversion of $U$. In particular, the proximal 
point algorithm in 
\eqref{e:ppavm} reduces to
\begin{equation}
\label{e:ppavmwinv}
z^0\in\HH,\quad (\forall k\in\NN)\quad z^{k+1}=z^k+\lambda 
P(J_{P^{-1} A}z^k-z^k)
\end{equation}
for $\lambda<\|U\|^{-1}$ and, in the case of \eqref{e:gradvm}, 
to avoid inversion is to 
come back to the gradient-type method with the standard metric.

\end{enumerate}
\end{remark}

%


\section{Primal-dual composite 
monotone inclusions with non self-adjoint linear operators}
\label{sec:5}
In this section, we apply our algorithm to composite primal-dual monotone 
inclusions involving a cocoercive and a lipschitzian monotone 
operator. 
\begin{problem}
\label{prob:mi}
Let ${\mathrm H}$ be a real Hilbert space, let ${\mathrm 
X}\subset {\mathrm H}$ be closed and convex, let $z\in{\mathrm 
H}$,  let ${\mathrm A}\colon {\mathrm H} 
\to 
2^{\mathrm H}$ be maximally monotone, let
${\mathrm C}_1 \colon  {\mathrm H} \to{\mathrm H}$ be 
$\mu$-cocoercive, for some $\mu\in\RPP$, and let ${\mathrm C}_2 
\colon  {\mathrm H} \to{\mathrm H}$ be a monotone and
$\delta$-lipschitzian operator, for some $\delta\in\RPP$. Let $m\geq 1$ be an 
integer,  
and, for 
every 
$i\in\{1,\ldots,m\}$, let ${\mathrm G}_i$ be a real Hilbert 
space, let $r_i\in{\mathrm G}_i$, let ${\mathrm B}_i\colon 
{\mathrm G}_i \to 2^{{\mathrm G}_i}$ be maximally monotone, 
let ${\mathrm D}_i\colon {\mathrm G}_i \to 2^{{\mathrm G}_i}$ be 
maximally monotone and $\nu_i$-strongly monotone, for some 
$\nu_i\in\RPP$, and suppose that ${\mathrm L}_i\colon {\mathrm 
H} \to {\mathrm G}_i$ is a nonzero linear bounded operator. The 
problem is to solve the primal inclusion. 
\begin{equation}
\label{e:miprimal}
\text{find }\quad {\mathrm x}\in {\mathrm X}\quad \text{such that 
}\quad {\mathrm z}\in 
{\mathrm A}{\mathrm x}+\sum_{i=1}^m{\mathrm 
L}_i^*({\mathrm B}_i\infconv {\mathrm D}_i)(\mathrm 
{L}_i\mathrm 
{x}-\mathrm 
{r}_i)+\mathrm{C}_1\mathrm{x}+\mathrm{C}_2\mathrm{x}
\end{equation}
together with the dual inclusion
\begin{align*}
&\text{find }\quad \mathrm{v}_1\in  
\mathrm{G}_1,\ldots,\mathrm{v}_m\in  
\mathrm{G}_m\quad \\
&\text{such that 
}\quad 
(\exists \mathrm{x\in 
X})\:
\begin{cases}
\mathrm{z}-\sum_{i=1}^m\mathrm{L}_i^*\mathrm{v}_i\in 
\mathrm{Ax+C}_1\mathrm{x}+\mathrm{C}_2\mathrm{x}\\
(\forall i\in\{1,\ldots,m\})\:\:\mathrm{v}_i\in({\mathrm 
B}_i\infconv {\mathrm D}_i)(\mathrm 
{L}_i\mathrm 
{x}-\mathrm 
{r}_i) \numberthis\label{e:midual}
\end{cases}
\end{align*}
under the assumption that a solution exists.
\end{problem}

In the case when ${\mathrm X}={\mathrm H}$ and ${\mathrm 
C}_2=0$, 
Problem~\ref{prob:mi}  is studied in 
\cite{vu2013splitting}\footnote{Note that in \cite{vu2013splitting}, 
weights 
$(\omega_i)_{1\leq i\leq m}$ multiplying operators 
$(\mathrm{B}_i\infconv\mathrm{D}_i)_{1\leq i\leq m}$ are considered.
They can be retrieved in \eqref{e:miprimal} by considering 
$(\omega_i\mathrm{B}_i)_{1\leq i\leq m}$ and 
$(\omega_i\mathrm{D}_i)_{1\leq i\leq m}$ instead of 
$(\mathrm{B}_i)_{1\leq i\leq m}$ and 
$(\mathrm{D}_i)_{1\leq i\leq m}$. Then both formulations are 
equivalent.}
and
models a large class of problems including 
optimization problems, variational inequalities, equilibrium 
problems, among others (see 
{\color{red}\cite{briceno2011monotone+,he2012convergence,vu2013splitting,condat2013primal}}
 and 
the 
references therein). In \cite{vu2013splitting} the author 
rewrite \eqref{e:miprimal} and \eqref{e:midual} in the case 
${\mathrm X}={\mathrm H}$ as
\begin{equation}
\label{e:mipd}
\text{find}\quad {z}\in\HH\quad\text{such that 
}\quad 
{0}\in 
{M}{z}+{S}{z}+Q{z},
\end{equation}
where $\HH=\mathrm{H\times G}_1\times\cdots\times\mathrm{G}_m$, 
${M}\colon\HH\to 
2^{\HH}\colon (\mathrm{x},\mathrm{v}_1,\ldots, 
\mathrm{v}_m)\mapsto (\mathrm{Ax-z})\times 
(\mathrm{B}_1^{-1}\mathrm{v}_1+\mathrm{r}_1)\times\cdots\times 
(\mathrm{B}_m^{-1}\mathrm{v}_m+\mathrm{r}_m)$ is maximally monotone, 
${S}\colon 
\HH\to\HH\colon (\mathrm{x},\mathrm{v}_1,\ldots, 
\mathrm{v}_m)\mapsto 
(\sum_{i=1}^m\mathrm{L}_i^*\mathrm{v}_i,
-\mathrm{L}_1\mathrm{x},\ldots,-\mathrm{L}_m\mathrm{x})$ is skew linear,
 and
$Q\colon 
\HH\to\HH\colon 
(\mathrm{x},\mathrm{v}_1,\ldots,\mathrm{v}_m)\mapsto 
(\mathrm{C}_1\mathrm{x},\mathrm{D}_1^{-1}\mathrm{v}_1,\ldots,
\mathrm{D}_m^{-1}\mathrm{v}_m)$ is cocoercive.
 If $(\mathrm{x},\mathrm{v}_1,\ldots,\mathrm{v}_m)$ is a 
 solution in the primal-dual space $\HH$ to \eqref{e:mipd}, then
 $\mathrm{x}$ is a solution to  \eqref{e:miprimal} and 
 $(\mathrm{v}_1,\ldots,\mathrm{v}_m)$ is a solution to \eqref{e:midual}. The author 
 provide an algorithm for solving 
 \eqref{e:miprimal}--\eqref{e:midual} in this particular instance,
which is an application of the {\em 
 forward-backward} splitting (FBS) applied to the inclusion 
 \begin{equation}
 \label{e:mipdvm}
 \text{find}\quad {z}\in\HH\quad\text{such that 
 }\quad 
 {0}\in 
 V^{-1}({M}+{S}){z}+V^{-1}Q{z},
 \end{equation}
 where $V$ is a specific symmetric strongly monotone operator.
 Under the metric $\scal{V\cdot}{\cdot}$, $V^{-1}({M}+{S})$ is 
 maximally monotone and $V^{-1}Q$ is cocoercive and, therefore, 
 the FBS converges weakly to a primal-dual solution. 
 
 In order to tackle the case $\mathrm{C}_2\neq 0$, we propose to 
 use the method in Theorem~\ref{cor:asymmetricnoinversion} for solving 
 $0\in Ax+B_1x+B_2x$ where $A=M$, $B_1=Q$, $B_2=S+C_2$,
 and $C_2\colon(\mathrm{x},\mathrm{v}_1,\ldots,\mathrm{v}_m)\mapsto 
 (\mathrm{C}_2\mathrm{x},0,\ldots,
0)$
 allowing, in that way, non self-adjoint linear operators which may vary among 
 iterations. 
 The following result provides the method thus obtained, where the dependence 
 of the non self-adjoint linear operators with respect to iterations has been avoided for 
 simplicity.
 
 \begin{theorem}
 \label{p:primdumi} 
 In  Problem~\ref{prob:mi}, set $\mathrm{X}=\mathrm{H}$, set 
 $\mathrm{G}_0=\mathrm{H}$, for every $i\in\{0,1,\ldots,m\}$ and 
 $j\in\{0,\ldots, i\}$, let
 $\mathrm{P}_{ij}\colon\mathrm{G}_j\to\mathrm{G}_i$ be a linear 
 operator satisfying 
 \begin{equation}
 \label{e:condiPii}
 (\forall 
   \mathrm{x}_{i}\in\mathrm{G}_{i})\quad 
   \scal{\mathrm{P}_{ii}\mathrm{x}_{i}}{\mathrm{x}_{i}}
   \geq\varrho_i\|\mathrm{x}_{i}\|^2
   \end{equation}
   for some $\varrho_i>0$. Define the $(m+1)\times(m+1)$ 
   symmetric real 
   matrices $\Upsilon$, $\Sigma$, and $\Delta$ by
 \begin{align*}
  (\forall i\in\{0,\ldots,m\})(\forall j<i)\quad \Upsilon_{ij}&=
  \begin{cases}
   0,\quad&\text{if }i=j;\\
  \|\mathrm{P}_{ij}\|/2,&\text{if }i>j,\\
  \end{cases}
\quad \\
  \Sigma_{ij}&=
    \begin{cases}
 \|\mathrm{P}_{ii}-\mathrm{P}_{ii}^*\|/2,\quad&\text{if }i=j;\\
    \|\mathrm{L}_i+\mathrm{P}_{i0}/2\|,&\text{if }i\geq 1;j=0;\\
    \|\mathrm{P}_{ij}\|/2,&\text{if }i>j>0,
    \end{cases}\numberthis  \label{e:defUpsilon}
  \end{align*}
and $\Delta={\rm Diag}(\varrho_0,\ldots,\varrho_m)$.
Assume that $\Delta-\Upsilon$
is positive definite with smallest eigenvalue $\rho>0$ and
that 
\begin{equation}
\label{e:metricconditionpd}
(\|\Sigma\|_2+\delta)^2<\rho\left(\rho-\frac{1}{2\beta}\right),
\end{equation}
where $\beta=\min\{\mu,\nu_1,\ldots,\nu_m\}$.
Let 
$M=\max_{i=0,\ldots,m}\|\mathrm{P}_{ii}\|+\|\Upsilon\|_2$, let
$\lambda\in]0,M^{-1}[$,
 let $(\mathrm{x}^0,\mathrm{u}_1^0,\ldots,\mathrm{u}_m^0)\in 
 \mathrm{H\times 
 G}_1\times\cdots\times 
 \mathrm{G}_m$, and let $\{\mathrm{x}^k\}_{k\in\NN}$ 
 and
 $\{\mathrm{u}_i^k\}_{k\in\NN, 1\leq i\leq m}$ the sequences 
 generated by the 
 following 
 routine: 
 for every $k\in\NN$
 \begin{equation}
 \label{e:algomicomp1} 
\hspace{-.2cm}
 \begin{array}{l}
 \left\lfloor
 \begin{array}{l}
 \!\!\mathrm{y}^k 
 \!=\!J_{ 
  \mathrm{P}_{00}^{-1}\mathrm{A}}
  \left(\mathrm{x}^k-\mathrm{P}_{00}^{-1}\bigg(\mathrm{C}_1\mathrm{x}^k+
 \mathrm{C}_2\mathrm{x}^k+\sum_{i=1}^m
 \mathrm{L}_i^*\mathrm{u}_i^k\bigg)\right)\\
 \!\!\mathrm{v}_1^k \!=\! J_{\mathrm{P}_{11}^{-1}\mathrm{B}_1^{-1}}
\!\left(\mathrm{u}_1^k - 
   \mathrm{P}_{11}^{-1}\bigg(\mathrm{D}_1^{-1}\mathrm{u}_1^k-
    \mathrm{L}_1\mathrm{x}^k-\mathrm{P}_{10}(\mathrm{x}^k-\mathrm{y}^k)\bigg)\right)\\
  \!\!\mathrm{v}_2^k\!=\! J_{ \mathrm{P}_{22}^{-1}\mathrm{B}_2^{-1}}
\!\left(\mathrm{u}_2^k - 
    \mathrm{P}_{22}^{-1}\bigg(\mathrm{D}_2^{-1}\mathrm{u}_2^k-
     \mathrm{L}_2\mathrm{x}^k-\mathrm{P}_{20}(\mathrm{x}^k-\mathrm{y}^k)
     -\mathrm{P}_{21}(\mathrm{u}_1^k-\mathrm{v}_1^k)\bigg)\right)\\
     \vdots\\
  \!\!\mathrm{v}_m^k \!=\!J_{\mathrm{P}_{mm}^{-1}\mathrm{B}_m^{-1}}
\!\!\left(\!\mathrm{u}_m^k\! -\! 
 \mathrm{P}_{mm}^{-1}\bigg(\!\mathrm{D}_m^{-1}\mathrm{u}_m^k\!-\!
  \mathrm{L}_m\mathrm{x}^k\!-\!\mathrm{P}_{m0}(\mathrm{x}^k\!-\!\mathrm{y}^k)
  \!-\!\sum_{j=1}^{m-1}\!\mathrm{P}_{mj}(\mathrm{u}_j^k\!-\!\mathrm{v}_j^k)\bigg)\!\right)\\
 \!\!\mathrm{x}^{k+1}= 
 \mathrm{x}^k+\lambda\left(\mathrm{P}_{00}(\mathrm{y}^k-\mathrm{x}^k)+
 \left(\mathrm{C}_2\mathrm{x}^k-\mathrm{C}_2\mathrm{y}^k+\sum_{i=1}^m
  \mathrm{L}_i^*(\mathrm{u}_i^k-\mathrm{v}_i^k)\right)\right)\\
  \! \!\mathrm{u}_1^{k+1}=\mathrm{u}_1^k+\lambda\Big(\mathrm{P}_{10}
   (\mathrm{y}^k-\mathrm{x}^k)+
    \mathrm{P}_{11}(\mathrm{v}_1^k-\mathrm{u}_1^k)-
   \mathrm{L}_1(\mathrm{x}^k-\mathrm{y}^k)\Big)\\
     \vdots\\
   \!\! \mathrm{u}_m^{k+1}
   =\mathrm{u}_m^k+\lambda\left(\mathrm{P}_{m0}(\mathrm{y}^k-\mathrm{x}^k)+
        \sum_{j=1}^m\mathrm{P}_{mj}(\mathrm{v}_j^k-\mathrm{u}_j^k)-
       \mathrm{L}_m(\mathrm{x}^k-\mathrm{y}^k)\right). 
 \end{array}
 \right.\\[2mm]
 \end{array}
  \end{equation}
 Then there exists a primal-dual solution 
 $(\mathrm{x}^*,\mathrm{u}_1^*,\ldots,\mathrm{u}_m^*)\in 
  \mathrm{H\times 
  G}_1\times\cdots\times 
  \mathrm{G}_m$ to 
 Problem~\ref{prob:mi} such that $\mathrm{x}^k\rightharpoonup 
 \mathrm{x}^*$ and, for every $i\in\{1,\ldots,m\}$,
 $\mathrm{u}_i^k\rightharpoonup \mathrm{u}_i^*$.
 \end{theorem}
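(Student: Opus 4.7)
The plan is to apply Theorem~\ref{cor:asymmetricnoinversion} in the product space $\HHH = \mathrm{H}\times\mathrm{G}_1\times\cdots\times\mathrm{G}_m$ with $X=\HHH$, taking the operators $A=M$, $B_1=Q$, and $B_2=S+C_2$ as described before the theorem, and choosing the bounded linear map $P\colon\HHH\to\HHH$ to be block lower triangular with blocks $\mathrm{P}_{ij}$ for $j\le i$ (and zeros above the diagonal). First I would check that $A=M$ is maximally monotone, $B_1=Q$ is $\beta$-cocoercive with $\beta=\min\{\mu,\nu_1,\ldots,\nu_m\}$, and $B_2=S+C_2$ is monotone (skew plus monotone) and $(\|S\|+\delta)$-Lipschitz, so the hypotheses of Problem~\ref{prob:main} hold on $\HHH$; note that $A+B_2$ is maximally monotone because $B_2$ is single valued, continuous and monotone on the full space.

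Next I would analyze the decomposition $P=U+S_P$ into its self-adjoint and skew components. The off-diagonal block of $U$ corresponding to indices $i>j$ is $(\mathrm{P}_{ij}+\mathrm{P}_{ji}^*)/2=\mathrm{P}_{ij}/2$ (since $\mathrm{P}_{ji}=0$), and its diagonal block is $(\mathrm{P}_{ii}+\mathrm{P}_{ii}^*)/2$, whose numerical range is lower bounded by $\varrho_i$ from \eqref{e:condiPii}. A standard Schur/Young argument with the matrix $\Upsilon$ of off-diagonal operator norms then gives
\begin{equation*}
(\forall z\in\HHH)\quad \scal{Uz}{z}\ge\rho\|z\|^2,
\end{equation*}
where $\rho>0$ is the smallest eigenvalue of $\Delta-\Upsilon$. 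The skew part $S_P$ of $P$ has diagonal blocks $(\mathrm{P}_{ii}-\mathrm{P}_{ii}^*)/2$ and off-diagonal blocks $\mathrm{P}_{ij}/2$ (below) and $-\mathrm{P}_{ji}^*/2$ (above). The crucial step is to bound the Lipschitz constant $K$ of $B_2-S_P=(S-S_P)+C_2$: by the triangle inequality, $K\le\|S-S_P\|+\delta$, and the block-wise coupling between the primal-dual skew $S$ (with blocks $\pm\mathrm{L}_i,\mathrm{L}_i^*$) and $S_P$ produces exactly the off-diagonal norms $\|\mathrm{L}_i+\mathrm{P}_{i0}/2\|$ in the first column and $\|\mathrm{P}_{ij}\|/2$ elsewhere, together with $\|\mathrm{P}_{ii}-\mathrm{P}_{ii}^*\|/2$ on the diagonal. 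This gives $\|S-S_P\|\le\|\Sigma\|_2$, so $K\le\|\Sigma\|_2+\delta$. Assumption \eqref{e:metricconditionpd} then yields the second part of \eqref{e:metricconditions}, and $M=\max_i\|\mathrm{P}_{ii}\|+\|\Upsilon\|_2$ bounds $\|U\|$, so the stepsize condition $\lambda\in{]0,M^{-1}[}$ meets the requirement in Theorem~\ref{cor:asymmetricnoinversion}.

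The remaining step is to show that the abstract iteration \eqref{eq:FBF-asymmetric-no-U} unfolds to \eqref{e:algomicomp1}. Because $P$ is block lower triangular, $J_{P^{-1}A}$ can be computed one block at a time: the inclusion $(P+A)\zeta=P\xi$ in $\HHH$ separates into a forward substitution in which the $i$-th component involves only already-computed components $\zeta_0,\ldots,\zeta_{i-1}$, giving exactly the Gauss--Seidel updates $\mathrm{y}^k,\mathrm{v}_1^k,\ldots,\mathrm{v}_m^k$ (using $\mathrm{A},\mathrm{B}_1^{-1},\ldots,\mathrm{B}_m^{-1}$ on the diagonal). The update $z^{k+1}=z^k+\lambda(P(x^k-z^k)+B_2z^k-B_2x^k)$ then expands block by block, with the primal-dual skew $S$ contributing the $\mathrm{L}_i$ and $\mathrm{L}_i^*$ terms and the triangular $P$ contributing the $\mathrm{P}_{ij}$ corrections, yielding the last $m+1$ equations of \eqref{e:algomicomp1}. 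Once this identification is made, Theorem~\ref{cor:asymmetricnoinversion} gives weak convergence of $\{z^k\}$ to a zero of $A+B_1+B_2=M+Q+S+C_2$, and the equivalence between such zeros and primal-dual solutions of \eqref{e:miprimal}--\eqref{e:midual} (as recalled before \eqref{e:mipdvm}) yields the conclusion.

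The main obstacle is the careful bookkeeping in the third step above, namely matching the block structure of $P^{-1}A$ and of $P(x^k-z^k)+B_2z^k-B_2x^k$ with the explicit Gauss--Seidel display \eqref{e:algomicomp1}, and verifying that the coupling matrix $\Sigma$ really captures $\|S-S_P\|$: one must exploit that the $i$-th row of $S-S_P$ acting on a vector contains $\mathrm{L}_i+\mathrm{P}_{i0}/2$ in the $\mathrm{x}$-column and $\mathrm{P}_{ij}/2$ (respectively $-\mathrm{P}_{ji}^*/2$) in the remaining strictly lower (resp.\ upper) block columns, so that a symmetrization bounds its operator norm by $\|\Sigma\|_2$.
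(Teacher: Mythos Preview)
Your proposal is correct and follows essentially the same route as the paper: it applies Theorem~\ref{cor:asymmetricnoinversion} in the product space with $A=M$, $B_1=Q$, $B_2=S+C_2$ and the block lower-triangular $P$, establishes the coercivity bound $\scal{Uz}{z}\ge\rho\|z\|^2$ via the matrix $\Delta-\Upsilon$, bounds the Lipschitz constant of $B_2-S_P$ by $\|\Sigma\|_2+\delta$ through the block structure of the skew difference, bounds $\|U\|$ by $M$, and then unfolds the abstract iteration into the Gauss--Seidel display \eqref{e:algomicomp1}. The only differences from the paper are notational (the paper calls the skew part of $P$ simply $S$ and writes $B_2-S=C_2+R$ for what you call $C_2+(S-S_P)$) and that the paper carries out the $\|\Sigma\|_2$ bound explicitly via $|||Rx|||^2\le |\Sigma\xi|^2$ with $\xi=(\|\mathrm{x}_i\|)_i$, which is exactly the ``symmetrization'' you describe.
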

 
 \begin{proof} 
Consider the real Hilbert space $\HH=\mathrm{H\oplus 
G}_1\oplus\cdots\oplus 
\mathrm{G}_m$, where its scalar product and norm are denoted by 
$\pscal{\cdot}{\cdot}$ and $|||\cdot|||$, respectively, and 
$x=(\mathrm{x}_0,\mathrm{x}_1,\ldots,\mathrm{x}_m)$ and 
$y=(\mathrm{y}_0,\mathrm{y}_1,\ldots,\mathrm{y}_m)$ denote generic 
elements of $\HH$. Similarly as in 
\cite{vu2013splitting}, note that the set of 
 primal-dual solutions 
 $x^*=(\mathrm{x}^*,\mathrm{u}_1^*,\ldots,\mathrm{u}_m^*)\in\HH$ to 
 Problem~\ref{prob:mi} in the case $\mathrm{X}=\mathrm{H}$ coincides 
 with the set of 
 solutions to the monotone inclusion
 \begin{equation}
 \text{find}\quad x\in\HH\quad\text{such that}\quad 0\in Ax+B_1x+B_2x,
 \end{equation}
 where the operators ${A}\colon\HH\to 
 2^{\HH}$,  $B_1\colon \HH\to\HH$, and
 ${B}_2\colon \HH\to\HH$ {($\dom B_2=\HH$)} defined by
 \begin{equation}
 	\begin{cases}
 	A&\colon (\mathrm{x},\mathrm{v}_1,\ldots, 
 	\mathrm{v}_m)\mapsto (\mathrm{Ax-z})\times 
 	(\mathrm{B}_1^{-1}\mathrm{v}_1+\mathrm{r}_1)\times\cdots\times 
 	(\mathrm{B}_m^{-1}\mathrm{v}_m+\mathrm{r}_m)\\
 	B_1&\colon 
 	(\mathrm{x},\mathrm{v}_1,\ldots,\mathrm{v}_m)\mapsto 
 	(\mathrm{C}_1\mathrm{x},\mathrm{D}_1^{-1}\mathrm{v}_1,\ldots,
 	\mathrm{D}_m^{-1}\mathrm{v}_m)\\
 	B_2&\colon (\mathrm{x},\mathrm{v}_1,\ldots, 
 	\mathrm{v}_m)\mapsto 
 	(\mathrm{C}_2\mathrm{x}+\sum_{i=1}^m\mathrm{L}_i^*\mathrm{v}_i,
 	-\mathrm{L}_1\mathrm{x},\ldots,-\mathrm{L}_m\mathrm{x}),
 	\end{cases}
 \end{equation}
 are maximally monotone, 
 $\beta$-cocoercive, and monotone-Lipschitz, respectively 
{ (see  \cite[Proposition~20.22\,and\,20.23]{bauschke2017convex} and 
 \cite[Eq. (3.12)]{vu2013splitting}).}
 
 Now let
$P\colon\HH\to \HH$ defined by
\begin{equation}
\label{e:def_P}
P\colon x\mapsto \left(\mathrm{P}_{00}\mathrm{x}_0, 
\mathrm{P}_{10}\mathrm{x}_0+\mathrm{P}_{11}\mathrm{x}_1,\ldots,\sum_{j=0}^m
\mathrm{P}_{mj}\mathrm{x}_j\right)=\left(\sum_{j=0}^i
\mathrm{P}_{ij}\mathrm{x}_j\right)_{i=0}^m. 
\end{equation}
Then $P^*\colon x\mapsto(\sum_{j=i}^m
\mathrm{P}_{ji}^*\mathrm{x}_j)_{i=0}^m$ and 
$U\colon\HH\to \HH$ and $S\colon\HH\to \HH$ defined by
\begin{align}
\label{e:def_U}
U&\colon x\mapsto \left(\frac{1}{2}\sum_{j=0}^{i-1}
\mathrm{P}_{ij}\mathrm{x}_j+
\left(\frac{\mathrm{P}_{ii}+\mathrm{P}_{ii}^*}{2}\right)\mathrm{x}_i+
\frac{1}{2}\sum_{j=i+1}^m
\mathrm{P}_{ji}^*\mathrm{x}_j\right)_{i=0}^m\\
\label{e:def_S}
S&\colon x\mapsto \left(\frac{1}{2}\sum_{j=0}^{i-1}
\mathrm{P}_{ij}\mathrm{x}_j+
\left(\frac{\mathrm{P}_{ii}-\mathrm{P}_{ii}^*}{2}\right)\mathrm{x}_i-
\frac{1}{2}\sum_{j=i+1}^m
\mathrm{P}_{ji}^*\mathrm{x}_j\right)_{i=0}^m
\end{align}
are the self-adjoint and skew components of $P$, respectively, satisfying $P=U+S$.
 Moreover, for every
$x=(\mathrm{x}_0,\mathrm{x}_1,\ldots,\mathrm{x}_m)$ 
in $\HH$, we have
\begin{align}
\label{e:Ustronglymon}
\pscal{Ux}{x}&=\sum_{i=0}^m\frac{1}{2}\sum_{j=0}^{i-1}
\scal{\mathrm{P}_{ij}\mathrm{x}_j}{\mathrm{x}_i}+
\scal{\mathrm{P}_{ii}\mathrm{x}_i}{\mathrm{x}_i}+
\frac{1}{2}\sum_{j=i+1}^m
\scal{\mathrm{P}_{ji}^*\mathrm{x}_j}{\mathrm{x}_i}\nonumber\\
&=\sum_{i=0}^m\scal{\mathrm{P}_{ii}\mathrm{x}_i}{\mathrm{x}_i}
+\sum_{i=1}^m\sum_{j=0}^{i-1}\scal{\mathrm{P}_{ij}\mathrm{x}_j}{\mathrm{x}_i}
\nonumber\\
&\geq\sum_{i=0}^m\varrho_i\|\mathrm{x}_i\|^2-
\sum_{i=1}^m\sum_{j=0}^{i-1}\|\mathrm{P}_{ij}\|\,\|\mathrm{x}_i\|\,\|
\mathrm{x}_j\|
\nonumber\\
&=\xi\cdot(\Delta-\Upsilon)\xi\geq\rho|\xi|^2=\rho\,|||x|||^2,
\end{align}
where $\xi:=(\|\mathrm{x}_i\|)_{i=0}^m\in\RR^{m+1}$, $\Upsilon$ is 
defined in \eqref{e:defUpsilon}, and $\rho$ is the smallest (strictly 
positive) 
eigenvalue of $\Delta-\Upsilon$. In addition, we can write 
$B_2-S=C_2+R$,
where $C_2\colon x\mapsto 
  (\mathrm{C}_2\mathrm{x},0,\ldots,0)$ is monotone and 
  $\delta$-lipschitzian, and $R$ is a skew 
   linear operator satisfying, for every
$x=(\mathrm{x}_0,\mathrm{x}_1,\ldots,\mathrm{x}_m)\in\HH$,
$Rx=(\sum_{j=0}^mR_{i,j}\mathrm{x}_j)_{0\leq i\leq m}$, where the 
operators
$R_{i,j}\colon\mathrm{G}_j\to \mathrm{G}_i$ are defined by 
$R_{i,j}=-\mathrm{P}_{ij}/2$ if $i>j>0$, 
$R_{i,j}=-(\mathrm{L}_{i}+\mathrm{P}_{i0})/2$ if $i>j=0$, 
$R_{i,i}=(\mathrm{P}_{ii}^*-\mathrm{P}_{ii})/2$ and the other 
components follow from the skew property of $R$. 
Therefore,
\begin{align}
\label{e:desigS}
|||Rx|||^2\!=\!\sum_{i=0}^m\left\|\sum_{j=0}^mR_{i,j}\mathrm{x}_j\right\|^2
\!\!\!\leq\sum_{i=0}^m\left(\sum_{j=0}^m\|R_{i,j}\|\,\|\mathrm{x}_j\|\right)^{\!\!\!2}
\!\!=|\Sigma\xi|^2\!\leq\|\Sigma\|^2_2|\xi|^2\!=\|\Sigma\|^2_2|||x|||^2,
\end{align}
from which we obtain that
$B_2-S$ is $(\delta+\|\Sigma\|_2)$-lipschitzian. 
Altogether, by noting that, for every $x\in\HH$,
 $\|Ux\|\leq M$, all the 
hypotheses of Theorem~\ref{cor:asymmetricnoinversion} hold in this 
instance and by developing \eqref{eq:FBF-asymmetric-no-U} for this 
specific choices of $A$, $B_1$, $B_2$, $P$, $\gamma$, and setting, 
for every $k\in\NN$, 
$z^k=(\mathrm{x}^k,\mathrm{u}_1^k,\ldots,\mathrm{u}_m^k)$ and 
$x^k=(\mathrm{y}^k,\mathrm{v}_1^k,\ldots,\mathrm{v}_m^k)$, we obtain
\eqref{e:algomicomp1} after straighforward computations and  using
\begin{equation}
{ x^k=J_{ P^{-1}A}(z^k-P^{-1}(B_1z^k+B_2z^k))\quad
\Leftrightarrow\quad P(z^k-x^k)-(B_1z^k+B_2z^k)\in  Ax^k.}
\end{equation}
The result follows, hence, as a consequence of 
Theorem~\ref{cor:asymmetricnoinversion}.
\end{proof}

\begin{remark}
\begin{enumerate}
\item As in Theorem~\ref{cor:asymmetricnoinversion}, the algorithm in 
Theorem~\ref{p:primdumi} allows for linear operators
$(\mathrm{P}_{ij})_{0\leq i,j\leq m}$ depending on the iteration, 
whenever \eqref{e:metricconditions2} holds for the corresponding 
operators defined in 
\eqref{e:def_P}--\eqref{e:def_S}. We omit this generalization in 
Theorem~\ref{p:primdumi} for the sake of simplicity.

\item In the particular case when, for every $i\in\{1,\ldots,m\}$, 
$\textrm{B}_i=\widetilde{\textrm{B}}_i\infconv \textrm{M}_i$, where 
$\textrm{M}_i$ is such that $\textrm{M}_i^{-1}$ is monotone and 
$\sigma_i$-Lipschitz, for some $\sigma_i>0$, Problem~\eqref{prob:mi} 
can be solved 
in a similar way if, instead of $B_2$ and $\delta$, we consider 
$\widetilde{B}_2\colon 
(\mathrm{x},\mathrm{v}_1,\ldots, 
\mathrm{v}_m)\mapsto 
(\mathrm{C}_2\mathrm{x}+\sum_{i=1}^m\mathrm{L}_i^*\mathrm{v}_i,
\mathrm{M}_1^{-1}\mathrm{v}_1-\mathrm{L}_1\mathrm{x},\ldots,
\mathrm{M}_m^{-1}\mathrm{v}_m-\mathrm{L}_m\mathrm{x})$ and
$\widetilde{\delta}=\max\{\delta,\sigma_1,\ldots,\sigma_m\}$. Again, 
for the sake of 
simplicity, this extension has not been considered in 
Problem~\ref{prob:mi}.

\item If the inversion of the matrix $U$ is not difficult or no 
variable metric is used and the projection onto 
$\mathrm{X}\subset\mathrm{H}$ is computable, we can also use 
Theorem~\ref{thm:asymmetric_metric} for solving Problem~\ref{prob:mi} 
in the general case $\mathrm{X}\subset\mathrm{H}$.
 
\end{enumerate}
\end{remark}
 
\begin{corollary}
	\label{c:pd}
In Problem~\ref{prob:mi}, let $\theta\in[-1,1]$, let 
$\sigma_0,\ldots,\sigma_m$ be strictly positive real numbers and let 
$\Omega$ the 
$(m+1)\times(m+1)$ symmetric real matrix given by
\begin{equation}
\label{e:Omega}
(\forall i,j\in\{0,\ldots,m\})\quad \Omega_{ij}=
\begin{cases}
\frac{1}{\sigma_i},\quad&\text{if }i=j;\\
-(\frac{1+\theta}{2})\|\mathrm{L}_i\|,&\text{if }0=j<i;\\
0,&\text{if }0<j<i.
\end{cases}
\end{equation}
Assume that $\Omega$ is positive definite with $\rho>0$ its 
smallest eigenvalue and that
\begin{equation}
\label{e:conditioncor}
\left(\delta+\left(\frac{1-\theta}{2}\right)
\sqrt{\sum_{i=1}^m\|\mathrm{L}_i\|^2}\right)^2
<\rho\left(\rho-\frac{1}{2\beta}\right),
\end{equation}
where $\beta=\min\{\mu,\nu_1,\ldots,\nu_m\}$.
Let 
$M=(\min\{\sigma_0,\ldots,\sigma_m\})^{-1}+
(\frac{1+\theta}{2})\sqrt{\sum_{i=1}^m\|\mathrm{L}_{i}\|^2}$,
 let
$\lambda\in]0,M^{-1}[$,
 let $(\mathrm{x}^0,\mathrm{u}_1^0,\ldots,\mathrm{u}_m^0)\in 
 \mathrm{H\times 
 G}_1\times\cdots\times 
 \mathrm{G}_m$, and let $\{\mathrm{x}^k\}_{k\in\NN}$ 
 and
 $\{\mathrm{u}_i^k\}_{k\in\NN, 1\leq i\leq m}$ the sequences 
 generated by the 
 following 
 routine: 
\begin{equation}
\label{e:genVU}
(\forall k\in\NN)\quad 
\begin{array}{l}
 \left\lfloor
 \begin{array}{l}
\mathrm{y}^k 
 =J_{\sigma_0\mathrm{A}}
  \left(\mathrm{x}^k-\sigma_0\left(\mathrm{C}_1\mathrm{x}^k+
 \mathrm{C}_2\mathrm{x}^k+\sum_{i=1}^m
 \mathrm{L}_i^*\mathrm{u}_i^k\right)\right)\\[2mm]
 \text{For every } i=1,\ldots,m\\
\left\lfloor
 \mathrm{v}_i^k = J_{ 
\sigma_i\mathrm{B}_i^{-1}}\left(\mathrm{u}_i^k - 
\sigma_i\left(\mathrm{D}_i^{-1}\mathrm{u}_i^k-
    \mathrm{L}_i(\mathrm{y}^k+\theta(\mathrm{y}^k
    -\mathrm{x}^k))\right)\right)
    \right.\\[2mm]
  \mathrm{x}^{k+1}= 
 \mathrm{x}^k+\frac{\lambda}{\sigma_0}\left(\mathrm{y}^k-\mathrm{x}^k+
 \sigma_0\left(\mathrm{C}_2\mathrm{x}^k-\mathrm{C}_2\mathrm{y}^k+\sum_{i=1}^m
  \mathrm{L}_i^*(\mathrm{u}_i^k-\mathrm{v}_i^k)\right)\right)\\
 \text{For every } i=1,\ldots,m\\
\left\lfloor
 \mathrm{u}_i^{k+1}=\mathrm{u}_i^k+\frac{\lambda}{\sigma_i}
\left(\mathrm{v}_i^k-\mathrm{u}_i^k-
\sigma_i\theta\mathrm{L}_i(\mathrm{y}^k-\mathrm{x}^k)\right),
    \right.\\[2mm]
\end{array}
 \right.\\[2mm]
 \end{array}
  \end{equation}
Then there exists a primal-dual solution 
 $(\mathrm{x}^*,\mathrm{u}_1^*,\ldots,\mathrm{u}_m^*)\in 
  \mathrm{H\times 
  G}_1\times\cdots\times 
  \mathrm{G}_m$ to 
 Problem~\ref{prob:mi} such that $\mathrm{x}^k\rightharpoonup 
 \mathrm{x}^*$ and, for every $i\in\{1,\ldots,m\}$,
 $\mathrm{u}_i^k\rightharpoonup \mathrm{u}_i^*$.
\end{corollary}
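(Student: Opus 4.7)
The plan is to realize Corollary~\ref{c:pd} as a concrete specialization of Theorem~\ref{p:primdumi} by choosing the block operators $(\mathrm{P}_{ij})_{0\le j\le i\le m}$ carefully and verifying that all hypotheses translate into the conditions stated in the corollary.

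First I would take the diagonal blocks to be scalar multiples of the identity, namely $\mathrm{P}_{ii} = \mathrm{Id}/\sigma_i$ for every $i\in\{0,1,\dots,m\}$, which immediately yields $\varrho_i = 1/\sigma_i$ in \eqref{e:condiPii} and makes each $\mathrm{P}_{ii}$ self-adjoint. For the off-diagonal blocks I would set $\mathrm{P}_{ij}=0$ for $0<j<i$ and $\mathrm{P}_{i0} = -(1+\theta)\mathrm{L}_i$ for $i\ge 1$. With these choices the first five lines of \eqref{e:algomicomp1} collapse to $m+1$ independent resolvents (one in $\mathrm{H}$ and $m$ in parallel in the $\mathrm{G}_i$), and since $\mathrm{P}_{i0}(\mathrm{x}^k-\mathrm{y}^k) = -(1+\theta)\mathrm{L}_i(\mathrm{x}^k-\mathrm{y}^k)$, the argument of $J_{\sigma_i \mathrm{B}_i^{-1}}$ becomes the one in \eqref{e:genVU} after recognizing $\mathrm{L}_i\mathrm{x}^k + (1+\theta)\mathrm{L}_i(\mathrm{x}^k-\mathrm{y}^k) = \mathrm{L}_i(\mathrm{y}^k + \theta(\mathrm{y}^k-\mathrm{x}^k))$. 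A similar routine computation on the last $m+1$ updates of \eqref{e:algomicomp1} using $\mathrm{P}_{ii}=\mathrm{Id}/\sigma_i$ and $\mathrm{P}_{i0}=-(1+\theta)\mathrm{L}_i$ reproduces the $\mathrm{x}^{k+1}$ and $\mathrm{u}_i^{k+1}$ updates of \eqref{e:genVU}.

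Next I would verify that the scalar hypotheses of Theorem~\ref{p:primdumi} match those of the corollary. With the above choice, $\Delta = \mathrm{Diag}(1/\sigma_0,\dots,1/\sigma_m)$ and $\Upsilon$ is the symmetric matrix whose only non-zero entries are $\Upsilon_{i0}=\Upsilon_{0i}=(1+\theta)\|\mathrm{L}_i\|/2$ (using $\theta\ge -1$). Therefore $\Delta-\Upsilon$ coincides with the matrix $\Omega$ defined in \eqref{e:Omega}, so the positive-definiteness assumption and the value of $\rho$ are preserved. For $\Sigma$, the diagonal vanishes because each $\mathrm{P}_{ii}$ is self-adjoint, $\Sigma_{ij}=0$ for $0<j<i$, and $\Sigma_{i0}=\|\mathrm{L}_i + \mathrm{P}_{i0}/2\| = (1-\theta)\|\mathrm{L}_i\|/2$. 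This makes $\Sigma$ an arrow matrix whose spectral norm is $\|\Sigma\|_2 = \tfrac{1-\theta}{2}\sqrt{\sum_{i=1}^m\|\mathrm{L}_i\|^2}$ (the nonzero eigenvalues of such an arrow matrix are $\pm$ the Euclidean norm of its first column off the diagonal). Substituting into \eqref{e:metricconditionpd} yields exactly \eqref{e:conditioncor}.

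Finally, I would compute the constant $M$ from Theorem~\ref{p:primdumi}: $\max_i\|\mathrm{P}_{ii}\| = (\min_i \sigma_i)^{-1}$, while $\Upsilon$ is arrow-shaped with norm $\tfrac{1+\theta}{2}\sqrt{\sum_{i=1}^m\|\mathrm{L}_i\|^2}$, so $M$ agrees with the expression in the corollary and the admissible range $\lambda\in\,]0,M^{-1}[$ is reproduced. With every hypothesis verified, Theorem~\ref{p:primdumi} directly gives the asserted weak convergence of $\{\mathrm{x}^k\}$ and each $\{\mathrm{u}_i^k\}$ to a primal-dual solution. The only mildly delicate step is the spectral-norm calculation for the arrow matrices $\Upsilon$ and $\Sigma$; everything else is bookkeeping to confirm that \eqref{e:algomicomp1} really does reduce to \eqref{e:genVU} under this choice of block operators.
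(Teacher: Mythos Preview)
Your proposal is correct and follows essentially the same route as the paper: the same choice $\mathrm{P}_{ii}=\Id/\sigma_i$, $\mathrm{P}_{i0}=-(1+\theta)\mathrm{L}_i$, $\mathrm{P}_{ij}=0$ for $0<j<i$, the identification $\Delta-\Upsilon=\Omega$, and the arrow-matrix norm computations for $\Sigma$ and $\Upsilon$ all match the paper's argument. One small slip to fix: the displayed identity should read $\mathrm{L}_i\mathrm{x}^k - (1+\theta)\mathrm{L}_i(\mathrm{x}^k-\mathrm{y}^k) = \mathrm{L}_i(\mathrm{y}^k+\theta(\mathrm{y}^k-\mathrm{x}^k))$ (you have a sign error in front of $(1+\theta)$), but this does not affect the argument.
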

\begin{proof}
This result is a consequence of Theorem~\ref{p:primdumi} when, for 
every 
$i\in\{0,\ldots,m\}$, $\mathrm{P}_{ii}=\Id/\sigma_i$, 
$\mathrm{P}_{i0}=-(1+\theta)\mathrm{L}_{i}$,
and, for every $0<j<i$, $\mathrm{P}_{ij}=0$. Indeed, 
we have from \eqref{e:condiPii} that
$\varrho_i=1/\sigma_i$, and from \eqref{e:defUpsilon} we deduce that,
for every $x=(\xi_i)_{0\leq i\leq m}\in\RR^{m+1}$,
\begin{equation}
\|\Sigma 
x\|^2=\left(\frac{1-\theta}{2}\right)^2\left[\left(\sum_{i=0}^m\|\mathrm{L}_i\|\xi_i\right)^2+\xi_0^2
\sum_{i=1}^m\|\mathrm{L}_i\|^2\right]\leq 
\left(\frac{1-\theta}{2}\right)^2\left(\sum_{i=0}^m\|\mathrm{L}_i\|^2\right)\|x\|^2,
\end{equation}
from which we obtain 
$\|\Sigma\|_2\leq 
(\frac{1-\theta}{2})\sqrt{\sum_{i=1}^m\|\mathrm{L}_i\|^2}$. Actually, 
we have the equality by choosing $\bar{x}=(\bar{\xi}_i)_{0\leq i\leq 
m}$ defined 
by 
$\bar{\xi}_i=\|\mathrm{L}_i\|/\sqrt{\sum_{j=1}^m\|\mathrm{L}_j\|^2}$ 
for 
every $i\in\{1,\ldots,m\}$ and $\bar{\xi}_0=0$, which satisfies 
$\|\bar{x}\|=1$ 
and $\|\Sigma 
\bar{x}\|=(\frac{1-\theta}{2})\sqrt{\sum_{i=1}^m\|\mathrm{L}_i\|^2}$.
Therefore, condition 
\eqref{e:metricconditionpd} reduces to \eqref{e:conditioncor}.
On the other hand, from \eqref{e:defUpsilon} we deduce that 
$\Omega=\Delta-\Upsilon$ and 
$\Upsilon=(\frac{1+\theta}{1-\theta})\Sigma$, which yields 
$\|\Upsilon\|_2=(\frac{1+\theta}{2})\sqrt{\sum_{i=1}^m\|\mathrm{L}_i\|^2}$
and $\max_{i=0,\ldots,m}\|\mathrm{P}_{ii}\|
=(\min\{\sigma_0,\ldots,\sigma_m\})^{-1}$. Altogether, since 
\eqref{e:genVU} is exactly \eqref{e:algomicomp1} for this choice of 
matrices $(\mathrm{P}_{i,j})_{0\leq i,j,\leq m}$, the result is a 
consequence of Theorem~\ref{p:primdumi}.
\end{proof}

\begin{remark}
\label{r:pd2}
\begin{enumerate}
\item\label{r:pd21} Note that, the condition $\rho>0$ where $\rho$ is the smallest 
eigenvalue of $\Omega$ defined in \eqref{e:Omega}, is 
guaranteed if  
$\sigma_0(\frac{1+\theta}{2})^2\sum_{i=1}^m\sigma_i\|\mathrm{L}_i\|^2<1$.
Indeed, by repeating the procedure in \cite[(3.20)]{vu2013splitting} 
in finite dimension we obtain, for every $x=(\xi_i)_{0\leq i\leq 
m}\in\RR^{m+1}$,
\begin{align}
\hspace{-.5cm}x\cdot\Omega x&=
\sum_{i=0}^m\frac{\xi_i^2}{\sigma_i}-\sum_{i=1}^m
2\left(\frac{1+\theta}{2}\right)\xi_0\|\mathrm{L}_i\|\xi_i\nonumber\\
&=\sum_{i=0}^m\frac{\xi_i^2}{\sigma_i}-\left(\frac{1+\theta}{2}\right)
\sum_{i=1}^m2
\frac{\sqrt{\sigma_i\|\mathrm{L}_i\|\xi_0}}{(\sigma_0\sum_{j=1}^m
\sigma_j\|\mathrm{L}_j\|^2)^{1/4}}
\frac{(\sigma_0\sum_{j=1}^m
\sigma_j\|\mathrm{L}_j\|^2)^{1/4}\xi_i}{\sqrt{\sigma_i}}\label{e:necess}\\
&\ge\sum_{i=0}^m\frac{\xi_i^2}{\sigma_i}-\left(\frac{1+\theta}{2}\right)
\left(\frac{\xi_0^2}{\sqrt{\sigma_0}}\sqrt{\sum_{j=1}^m\sigma_j\|\mathrm{L}_j\|^2}
+\sqrt{\sigma_0\sum_{j=1}^m\sigma_j\|\mathrm{L}_j\|^2}
\sum_{j=1}^m\frac{\xi_j^2}{\sigma_j}\right)\nonumber\\
&=\left(1-\left(\frac{1+\theta}{2}\right)\sqrt{\sigma_0\sum_{j=1}^m\sigma_j\|\mathrm{L}_j\|^2}\right)
\sum_{i=0}^m\frac{\xi_i^2}{\sigma_i}\nonumber\\
&\ge \rho_v\|x\|^2\label{e:ineqrho}
\end{align}
with 
\begin{equation}
\label{e:rhov}
\rho_v=\max\{\sigma_0,\ldots,\sigma_m\}^{-1}\left(1-
\left(\frac{1+\theta}{2}\right)
\sqrt{\sigma_0\sum_{j=1}^m\sigma_j\|\mathrm{L}_j\|^2}\right).
\end{equation}
Note that $\rho_v$ coincides with the constant obtained in  
\cite{vu2013splitting} in the case $\theta=1$ and we have $\rho\geq 
\rho_v$. Moreover, 
$\sigma_0(\frac{1+\theta}{2})^2\sum_{i=1}^m\sigma_i\|\mathrm{L}_i\|^2<1$
is also necessary for obtaining $\rho>0$, since in \eqref{e:necess} 
we can choose a particular vector $x$ for 
obtaining the equality. Of course, this choice
does not guarantee to also have equality in the last inequality in 
\eqref{e:ineqrho} and, hence, $\rho\geq\rho_v$ in general.

\item\label{r:pd22} If we set $\theta=1$ and
$\mathrm{C}_2=0$ and, hence, $\delta=0$,
\eqref{e:conditioncor} reduces to $2\beta\rho>1$
and we obtain from \eqref{e:genVU} a variant of 
\cite[Theorem~3.1]{vu2013splitting} 
including an extra forward step involving only the operators
$(\mathrm{L}_i)_{1\leq i\leq m}$. 
However, our condition is less restrictive, since $\rho\geq\rho_v$, 
where $\rho_v$ is defined in \eqref{e:rhov} and it is obtained in  
\cite{vu2013splitting} as we have seen in the last remark. Actually, 
in the particular case when $m=1$, 
$\mathrm{L}_1=\alpha\Id$, $\sigma_0=\eta^2\sigma_1=:\eta\sigma$ for 
some $0<\eta<1$, constants $\rho_v$ and $\rho$ reduce to
$$\rho_v(\eta)=\frac{1-\eta\sigma\alpha}{\sigma}\quad\text{and}\quad
\rho(\eta)=\frac{1}{2\sigma}\left(\frac{\eta^2+1}{\eta^2}-\sqrt{\left(\frac{\eta^2-1}
{\eta^2}\right)^2+4\alpha^2\sigma^2}\right),$$
respectively. By straightforward computations we deduce that 
$\rho(\eta)>\rho_v(\eta)$ for every
$0<\eta<(\alpha\sigma)^{-1}$, and hence our constant can strictly 
improve the condition $2\beta\rho>1$, needed in both approaches. 
Moreover, since Theorem~\ref{p:primdumi} allows for non self-adjoint 
linear operators varying among iterations, we can permit variable 
stepsizes $\sigma_{0}^k,\ldots,\sigma_{m}^k$ in 
Theorem~\ref{p:primdumi}, which could not 
be used in \cite{vu2013splitting} because of the variable metric 
framework.
\item\label{r:pd23} In the particular case when $\mathrm{C}_1=0$ and 
$\mathrm{C}_2=0$ we can take $\beta\to+\infty$ and, hence, condition 
\eqref{e:conditioncor} reduces to 
\begin{equation}
\label{e:conditioncorem}
\left(\frac{1-\theta}{2}\right)
\sqrt{\sum_{i=1}^m\|\mathrm{L}_i\|^2}
<\rho,
\end{equation}
which is stronger than the condition in \cite{he2012convergence}
for the case $m=1$, in which it is only needed that $\rho>0$ for 
achieving convergence. Indeed, in the case $m=1$, 
\eqref{e:conditioncorem} reduces to 
$2-2\theta\sigma_0\sigma_1\|\mathrm{L}_1\|^2>
(1-\theta)(\sigma_0+\sigma_1)\|\mathrm{L}_1\|$,
which coincides with the condition in \cite{he2012convergence} in the 
case $\theta=1$, but they differ if $\theta\neq 1$ 
because of the extra forward step coming from the Tseng's splitting 
framework. Actually, in the case $\theta=0$ it reduces to 
$\sigma_0+\sigma_1<2/\|\mathrm{L}_1\|$ and in the case $\theta=-1$
we obtain the stronger condition 
$\max\{\sigma_0,\sigma_1\}<1/\|\mathrm{L}_1\|$.
Anyway, in our context we can use constants 
$\sigma_0^k,\ldots,\sigma_m^k$ varying among iterations and we have a 
variant of the method in \cite{he2012convergence} and, in the case 
when $\theta=1$, of Chambolle-Pock's splitting 
\cite{chambolle2011first}.

\item\label{r:pd24} Since $\rho_v$ defined in \eqref{e:rhov} satisfies $\rho_v\leq 
\rho$ in the case when $\mathrm{C}_1=\mathrm{C}_2=0$, a sufficient condition for 
guaranteeing \eqref{e:conditioncorem} is 
$(1-\theta)\sqrt{\sum_{i=1}^m\|\mathrm{L}_i\|^2}/2<\rho_v$, which implied 
by the condition
\begin{equation}
	 \max\{\sigma_0,\ldots,\sigma_m\}\sqrt{\sum_{i=1}^m\|\mathrm{L}_i\|^2}<1.
\end{equation}

\item\label{r:pd25} Consider the case of composite optimization problems, i.e.,
when $\mathrm{A}=\partial \mathrm{f}$, $\mathrm{C}_1=\nabla \mathrm{h}$ 
for every $i=1,\ldots,m$, $\mathrm{B}_i=\partial 
\mathrm{g}_i$ and $\mathrm{D}_i=\partial\mathrm{\ell}_i$, where, for 
every 
$i=1,\ldots,m$,
$\mathrm{f}\colon\mathrm{H}\to\RX$ and 
$\mathrm{g}_i\colon\mathrm{G}_i\to\RX$ are proper lower 
semicontinuous and convex functions and 
$\mathrm{h}\colon\mathrm{H}\to\RR$ is differentiable, convex, with 
$\beta^{-1}$-Lipschitz gradient. In this case, any solution to 
Problem~\ref{prob:mi} when $\mathrm{C}_2=0$ is a solution to the 
primal-dual optimization problems
\begin{equation}
\min_{\mathrm{x}\in\mathrm{H}}{\mathrm{f}(\mathrm{x})+\mathrm{h}(\mathrm{x})
+\sum_{i=1}^m(\mathrm{g}_i\infconv\mathrm{\ell}_i)(\mathrm{L}_i\mathrm{x})}
\end{equation}
and
\begin{equation}
\min_{\mathrm{u}_1\in\mathrm{G}_1,\ldots,\mathrm{u}_m\in\mathrm{G}_m}
{(\mathrm{f}^*\infconv\mathrm{h}^*)
\left(-\sum_{i=1}^m\mathrm{L}_i^*\mathrm{u}_i\right)+
\sum_{i=1}^m\mathrm{g}_i^*(\mathrm{u}_i)+\mathrm{\ell}_i^*(\mathrm{u}_i)},
\end{equation}
and the equivalence holds under some qualification condition. In this particular case,
\eqref{e:genVU} reduces to
\begin{equation}
	\label{e:genVUopti}
	\begin{array}{l}
		\left\lfloor
		\begin{array}{l}
			\mathrm{y}^k 
			=\prox_{\sigma_0\mathrm{f}}
			\left(\mathrm{x}^k-\sigma_0\left(\nabla\mathrm{h}(\mathrm{x}^k)+
			\sum_{i=1}^m
			\mathrm{L}_i^*\mathrm{u}_i^k\right)\right)\\[2mm]
			\text{For every } i=1,\ldots,m\\
			\left\lfloor
			\mathrm{v}_i^k = \prox_{ 
				\sigma_i\mathrm{g}_i^*}\left(\mathrm{u}_i^k - 
			\sigma_i\left(\nabla\mathrm{\ell}_i^*(\mathrm{u}_i^k)-
			\mathrm{L}_i(\mathrm{y}^k+\theta(\mathrm{y}^k
			-\mathrm{x}^k))\right)\right)
			\right.\\[2mm]
			\mathrm{x}^{k+1}= 
			\mathrm{x}^k+\frac{\lambda}{\sigma_0}\left(\mathrm{y}^k-\mathrm{x}^k+
			\sigma_0\sum_{i=1}^m
			\mathrm{L}_i^*(\mathrm{u}_i^k-\mathrm{v}_i^k)\right)\\
			\text{For every } i=1,\ldots,m\\
			\left\lfloor
			\mathrm{u}_i^{k+1}=\mathrm{u}_i^k+\frac{\lambda}{\sigma_i}
			\left(\mathrm{v}_i^k-\mathrm{u}_i^k-
			\sigma_i\theta\mathrm{L}_i(\mathrm{y}^k-\mathrm{x}^k)\right),
			\right.\\[2mm]
		\end{array}
		\right.\\[2mm]
	\end{array}
\end{equation}
which, in the case $m=1$, is very similar to the method proposed in 
\cite[Algorithm~3]{patrinos2016asym} (by taking $\mu=(1-\theta)^{-1}$ for 
$\theta\in[-1,0]$), with a slightly different choice of the 
parameters involved in the last two lines in \eqref{e:genVUopti}. {On the other 
hand, in the case when $\ell=0$ and $\theta=1$, it differs from 
\cite[Algorithm~5.1]{condat2013primal} in the last two steps, in which linear operators are 
involved in our case. } An advantage of our 
method, even in the case $m=1$, is that the stepsizes $\sigma_0$ and $\sigma_1$ 
may vary among iterations.
\end{enumerate}

\end{remark}

\section{Applications}
\label{sec:6}
In this section we explore {four} applications for illustrating the advantages and 
flexibility 
of the methods proposed in the previous sections. In the first application, we apply  
Theorem~\ref{t:1} to the obstacle problem in PDE's in which dropping the extra forward 
step decreases the computational cost per iteration because
the computation of an extra gradient step is numerically expensive.
In the second application, devoted to empirical risk minimization (ERM), we illustrate 
the flexibility of using non self-adjoint linear operators. We derive different sequential 
algorithms depending on the nature of the linear operator involved. { In the third 
application, we develop a distributed operator-splitting scheme which allows for 
time-varying communication graphs. Finally, the last application focuses in nonlinear 
constrained optimization, in which monotone non-Lipschitz operators arise naturally. }

\subsection{Obstacle problem}

The obstacle problem is to find the equilibrium position of an elastic membrane on a 
domain $\Omega$, whose boundary is fixed and is restricted to remain above the 
some obstacle, given by the function $\varphi\colon\Omega\to\RR$. This problem can 
be applied to fluid filtration in porous media, elasto-plasticity, optimal control among 
other disciplines (see, e.g., \cite{Caf88} and the references therein).
Let $u\colon\Omega\to\RR$ be a function representing the vertical displacement of the 
membrane and let $\psi\colon\Gamma\to\RR$ be the function representing the fixed 
boundary, where $\Gamma$ is the smooth boundary of $\Omega$. Assume that
$\psi\in H^{1/2}(\Gamma)$ and $\varphi\in C^{1,1}(\Omega)$ satisfy
$\mathrm{T}\varphi\leq\psi$, and consider the 
problem 
	\begin{align}
		\min_{\mathrm{u}\in H^1(\Omega)}&\frac{1}{2}\int_{\Omega}|\nabla 
		\mathrm{u}|^2\mathrm{dx}\nonumber\label{e:obstprob}\\
		\text{s.t. } \mathrm{T}\mathrm{u}&=\psi,\quad\text{a.e. on 
		}\Gamma;	\\
		\mathrm{u}&\geq \varphi,\quad\text{a.e. in }\Omega\nonumber,
	\end{align}
	where $\mathrm{T}\colon H^1(\Omega)\to H^{1/2}(\Gamma)$ is 
	the (linear) trace operator and $H^1(\Omega)$ is endowed with the 
	scalar product
	$\scal{\cdot}{\cdot}\colon(\mathrm{u},\mathrm{v})\mapsto
	\int_{\Omega}\mathrm{u}\mathrm{v}\,dx+\int_{\Omega}\nabla\mathrm{u}
	\cdot\nabla\mathrm{v}\,dx$. There is a unique solution to this 
	obstacle problem \cite{Caff98}.
	
	In order to set this problem in our context, let us define the 
	operator
	\begin{equation}
	\label{e:Q}
	Q\colon H^{-1}(\Omega)\times H^{-1/2}(\Gamma)\to H^1(\Omega)
	\end{equation}
	which associates to each $(\mathrm{q},\mathrm{w})\in 
	H^{-1}(\Omega)\times H^{-1/2}(\Gamma)$ the unique weak solution 
	(in the sense of 
	distributions) to \cite[Section~25]{ZeidlerIIB}
	\begin{equation}
	\begin{cases}
	-\Delta \mathrm{u}+\mathrm{u}=\mathrm{q},\quad&\text{in 
	}\Omega;\\
	\frac{\partial \mathrm{u}}{\partial \nu}=\mathrm{w},&\text{on 
	}\Gamma,
	\end{cases}
	\end{equation}
	where $\nu$ is outer unit vector normal to $\Gamma$. Hence, $Q$ 
	satisfies
	\begin{equation}
	\label{e:Qweak}
	(\forall \mathrm{v}\in 
	\mathrm{H})\qquad\scal{Q(\mathrm{q},\mathrm{w})}{\mathrm{v}}=
	\scal{\mathrm{w}}{\mathrm{T}\mathrm{v}}_{{-1/2},{1/2}}
	+\scal{\mathrm{q}}{\mathrm{v}}_{{-1},1},
	\end{equation}
	where $\scal{\cdot}{\cdot}_{{-1/2},{1/2}}$ and 
	$\scal{\cdot}{\cdot}_{{-1},{1}}$ stand for the dual pairs   
	$H^{-1/2}(\Gamma)-H^{1/2}(\Gamma)$ and 
	$H^{-1}(\Omega)-H^{1}(\Omega)$, respectively. Then, by defining
	$\mathrm{H}=H^1(\Omega)$, $\mathrm{G}=H^{1/2}(\Gamma)$, 
	$\mathrm{f}\colon \mathrm{u}\mapsto 
	\frac{1}{2}\int_{\Omega}|\nabla 
	\mathrm{u}|^2\mathrm{dx}$, $\mathrm{g}=\iota_{\mathrm{C}}$, 
	where 
	$\mathrm{C}=\menge{\mathrm{u}\in\mathrm{H}}{\mathrm{u}\geq\varphi\;\text{
			a.e. 
			in }\Omega}$, let $\mathrm{D}={\{\psi\}}$, and 
	let $\mathrm{L}=\mathrm{T}$, \eqref{e:obstprob} can be 
	written equivalently as
	\begin{equation}
	\label{e:probobstheo}
	\min_{\mathrm{L}\mathrm{u}\in\mathrm{D}}\mathrm{f}(\mathrm{u})+
	\mathrm{g}(\mathrm{u}).
	\end{equation}
	Moreover, it is easy to verify that $\mathrm{f}$ is convex and, by 
	using integration by parts and 
	\eqref{e:Qweak}, for every 
	$\mathrm{h}\in\mathrm{H}$ we have
	\begin{align}
		\mathrm{f}(\mathrm{u}+\mathrm{h})-\mathrm{f}(\mathrm{u})-
		\Scal{\!Q\left(\!-\Delta\mathrm{u},\frac{\partial\mathrm{u}}{\partial\nu}\right)\!}{\!\mathrm{h}}
		&=\frac{1}{2}\int_{\Omega}|\nabla\mathrm{h}|^2dx+\int_{\Omega}\!\!
		\nabla\mathrm{u}\cdot\nabla\mathrm{h}\,dx+\scal{\Delta\mathrm{u}}{\mathrm{h}}_{-1,1}\nonumber\\
		&\hspace{20pt}-\scal{\frac{\partial\mathrm{u}}{\partial\nu}}{\mathrm{T}\mathrm{h}}_{-1/2,1/2}\nonumber\\
		&=\frac{1}{2}\int_{\Omega}|\nabla\mathrm{h}|^2dx,
	\end{align}
	which yields
	\begin{equation}
	\lim_{\|\mathrm{h}\|\to0}\frac{\left|\mathrm{f}(\mathrm{u}+\mathrm{h})-\mathrm{f}(\mathrm{u})-
		\Scal{Q\left(-\Delta\mathrm{u},\frac{\partial\mathrm{u}}{\partial\nu}\right)}
		{\mathrm{h}}\right|}{\|\mathrm{h}\|}=\frac{1}{2}\lim_{\|\mathrm{h}\|\to0}\frac{\|\nabla
		\mathrm{h}\|_{L^2}^2}{\|\mathrm{h}\|}=0.
	\end{equation}
	Hence, $\mathrm{f}$ is Fr\'echet differentiable with a linear gradient given by
	$\nabla\mathrm{f}\colon 
	\mathrm{u}\mapsto Q\left(\!-\Delta\mathrm{u},\frac{\partial\mathrm{u}}
	{\partial\nu}\right)$. Moreover, from integration by parts we have
	\begin{equation}
	\Scal{Q\left(-\Delta\mathrm{u},\frac{\partial\mathrm{u}}{\partial\nu}\right)}{\mathrm{h}}=
	\Scal{\frac{\partial\mathrm{u}}{\partial\nu}}{\mathrm{T}\mathrm{h}}_{-1/2,1/2}
	-\scal{\Delta\mathrm{u}}{\mathrm{h}}_{-1,1}=\int_{\Omega}
	\nabla\mathrm{u}\cdot\nabla\mathrm{h}\,dx\leq\|\mathrm{u}\|\|\mathrm{h}\|,
	\end{equation}
	which yields $\|\nabla\mathrm{f}(\mathrm{u})\|\leq\|\mathrm{u}\|$ 
	and, hence, it is $1$-cocoercive \cite{baillon1977quelques}. In addition, the trace 
	operator is 
	linear and bounded \cite{Grisv86} and we have from \eqref{e:Qweak} 
	that
	\begin{equation}
	\label{e:Qweak2}
	(\forall \mathrm{v}\in 
	\mathrm{H})(\forall \mathrm{w}\in 
	H^{1/2}(\Gamma))\qquad\scal{Q(0,\mathrm{w})}{\mathrm{v}}=
	\scal{\mathrm{w}}{\mathrm{T}\mathrm{v}}_{{-1/2},{1/2}},
	\end{equation}
	which yields 
	$\mathrm{L}^*\colon\mathrm{w}\mapsto\mathrm{Q}(0,\mathrm{w})$ and 
	since $\mathrm{C}$ is non-empty closed convex, $\mathrm{g}$ is 
	convex, proper, lower semicontinuous and $\prox_{\gamma 
		\mathrm{g}}=P_{\mathrm{C}}$, for any $\gamma >0$. 

Since first order conditions of \eqref{e:probobstheo} reduce to find 
	$(\mathrm{u},\mathrm{w})\in\mathrm{H}\times\mathrm{G}$ such that
	$0\in N_{\mathrm{C}}(\mathrm{u})+\nabla\mathrm{f}(\mathrm{u})+
	\mathrm{T}^*N_{\mathrm{D}}(\mathrm{T}\mathrm{u})$, which is a particular case 
	of Problem~\ref{prob:mi} and from Corollary~\ref{c:pd} when $\theta=1$ the method
\begin{equation}
\label{e:genVUobst}
\begin{array}{l}
\left\lfloor
\begin{array}{l}
\mathrm{v}^k 
=P_{\mathrm{C}}
\left(\mathrm{u}^k-\sigma_0Q\left(-\Delta\mathrm{u}^k,
\frac{\partial\mathrm{u}^k}{\partial\nu}+\mathrm{w}^k\right)\right)\\[2mm]
\mathrm{t}^k = \mathrm{w}^k +
\sigma_1
\left(\mathrm{T}(2\mathrm{y}^k-\mathrm{x}^k)-\psi\right)\\[2mm]
\mathrm{u}^{k+1}= 
\mathrm{u}^k+\frac{\lambda}{\sigma_0}\left(\mathrm{v}^k-\mathrm{u}^k+
\sigma_0
Q(0,\mathrm{w}^k-\mathrm{t}^k)\right)\\[2mm]
\mathrm{w}^{k+1}=\mathrm{w}^k+\frac{\lambda}{\sigma_1}
\left(\mathrm{t}^k-\mathrm{w}^k-
\sigma_1\mathrm{T}(\mathrm{v}^k-\mathrm{u}^k)\right)\\[2mm]
\end{array}
\right.\\[2mm]
\end{array}
\end{equation}
generates a weakly convergent sequence $(\mathrm{u}^k)_{k\in\NN}$ to the 
unique 
solution to the obstacle problem provided, for instance  (see 
Remark~\ref{r:pd2}.\ref{r:pd21}), that
$\max\{\sigma_0,\sigma_1\}+2\sqrt{\sigma_0\sigma_1}\|\mathrm{T}\|<2$.
Note that $\nabla\mathrm{f}$ must be computed only once at each iteration, improving 
the performance with respect to primal-dual methods following Tseng's approach, in 
which $\nabla\mathrm{f}$ must be computed twice by iteration (see, e.g., 
\cite{briceno2011monotone+,tseng2000modified}). The method proposed in 
{\cite{condat2013primal,vu2013splitting}} can also solve this problem but with 
stronger 
conditions on 
constants $\sigma_0$ and $\sigma_1$ as studied in Remark~\ref{r:pd2}. Moreover,
our approach may include variable stepsizes together with different assymetric linear 
operators which may improve the performance of the method. 

On the other hand, the general version of our method in 
Theorem~\ref{thm:asymmetric_metric} allows for an 
additional projection onto a closed convex set. In this case this can be useful to impose
some of the constraints of the problem in order to guarantee that iterates at each 
iteration satisfy such constraints. An additional projection step may accelerate the 
method as it has been studied in \cite{BAKS16}.
Numerical comparisons 
among these methods are part of further research.
	
\subsection{An Incremental Algorithm for Nonsmooth Empirical Risk 
Minimization}

In machine learning~\cite{Shalev-Shwartz:2014:UML:2621980}, the Empirical Risk 
Minimization (ERM) problem seeks to minimize a finite sample approximation of an 
expected loss, under conditions on the feasible set and the loss function. If the solution
to the sample approximation converges to a minimizer of the expected loss when the 
size of the sample increases, we say that the problem is learnable.
Suppose that we have a sample of size $m$, and, for every $i\in\{1,\ldots,m\}$,
 the loss function associated to the sample $\mathrm{z}_i$ is given by $l(\cdot; 
 \mathrm{z}_i) \colon 
 \mathrm{x}\mapsto  \mathrm{f}_i(\mathrm{a}_i^{\top} x)$, where each 
 $\mathrm{a}_i \in \RR^{d}\backslash \{0\}$ 
 and each $\mathrm{f}_i : \RR \rightarrow (-\infty, \infty]$ is closed, proper, and 
 convex. 
 Then the ERM problem is to

\begin{align}\label{eq:ERM}
\Min_{\mathrm{x} \in \RR^d} \frac{1}{m} \sum_{i=1}^{m} 
\mathrm{f}_i(\mathrm{a}_i^{\top} \mathrm{x}).
\end{align}
 This form features in support vector machines, logistic regression, linear regression, least-absolute deviations, and many other common models in machine learning. 

The parameter $m$ indicates the size of the training set and is typically large. 
Parallelizing a (sub)gradient computation of~\eqref{eq:ERM} is straightforward, but in 
general, because training sets are large, we may not have enough processors to do so. 
Thus, when only a few processors are available, incremental iterative algorithms, in 
which one or a few training samples are used per iteration to update our solution 
estimate, are a natural choice.

Several incremental algorithms are available for solving~\eqref{eq:ERM}, including 
incremental (sub)gradient descent and incremental aggregated gradient 
methods~\cite{schmidt2013minimizing,defazio2014finito,johnson2013accelerating,defazio2014saga,bertsekas2015incremental,wang2013incremental,bertsekasincrementalproximal,doi:10.1137/S1052623499362111,bianchi2015ergodic}.
 The former class requires 
diminishing stepsizes (e.g., of size 
$O(k^{-1/2})$) and, hence, their convergence may be very slow, while the latter 
class of algorithms is 
usually restricted to the cases in which either $\mathrm{f}_i$ is smooth or the dual 
problem 
of~\eqref{eq:ERM} is smooth (in which case~\eqref{eq:ERM} is strongly convex). In 
contrast, we now develop an 
incremental proximal algorithm, which imposes no smoothness or 
strong convexity assumptions. It has a Gauss-Seidel 
structure and is obtained by an application of 
Theorem~\ref{p:primdumi}. The involved 
stepsizes may vary among iterations but they are set to be constants 
for simplicity.

The method follows from the following first-order optimality 
conditions obtained 
assuming some qualification condition:
\begin{equation}
	\mathrm{x}\quad\text{solves } \eqref{eq:ERM}\quad\Leftrightarrow\quad 
	0\in\sum_{i=1}^{m}\mathrm{a}_i\partial 
	\mathrm{f}_i(\mathrm{a}_i^{\top}\mathrm{x}),
\end{equation}
which is a particular case of Problem~\ref{prob:mi} when 
$\mathrm{H}=\RR^d$,
$\mathrm{A}\equiv\{0\}$, $\mathrm{C}_1=\mathrm{C}_2\equiv0$ and, for 
every 
$i\in\{1,\ldots,m\}$, $\mathrm{G}_i=\RR$, $\mathrm{D}_i^{-1}=0$, 
$\mathrm{L}_i=\mathrm{a}_i^{\top}$, and 
$\mathrm{B}_i=\partial\mathrm{f}_i$. By using 
Theorem~\ref{p:primdumi} in this case for matrices 
$(\mathrm{P}_{ij})_{0\leq i<j\leq m}$ given by 
\begin{equation}
(\forall 0\leq j<i\leq m)\quad \mathrm{P}_{ij}=
\begin{cases}
\frac{\Id}{\sigma_0},\quad&\text{if }i=j=0;\\
\frac{1}{\sigma_i},\quad&\text{if }i=j>0;\\
-\mathrm{a}_i^{\top},&\text{if }j=0;\\
\sigma_0 \mathrm{a}_i^{\top}\mathrm{a}_j,&\text{if }0<j<i,
\end{cases}
\end{equation}
we obtain
\begin{equation}
 \label{e:algomicompERM3} 
 \begin{array}{l}
 \left\lfloor
 \begin{array}{l}
 \mathrm{v}_1^k = \prox_{ 
  \sigma_{1}\mathrm{f}_1^{*}}\left(\mathrm{u}_1^k + 
   \sigma_{1}\left(    
   \mathrm{a}_1^{\top}\mathrm{x}^k-\sigma_{0}\sum_{i=1}^m
    \mathrm{a}_1^{\top}\mathrm{a}_i\mathrm{u}_i^k\right)\right)\\
  \mathrm{v}_2^k = \prox_{ 
    \sigma_{2}\mathrm{f}_2^{*}}\left(\mathrm{u}_2^k+
    \sigma_{2}\left(
     \mathrm{a}_2^{\top}\mathrm{x}^k-\sigma_{0}\left(\mathrm{a}_2^{\top}
     \mathrm{a}_1\mathrm{v}_1^k+\sum_{i=2}^m
 \mathrm{a}_2^{\top}\mathrm{a}_i\mathrm{u}_i^k\right)\right)\right)\\[1mm]
     \hspace{0.5cm}\vdots\\
  \mathrm{v}_m^k = \prox_{ 
      \sigma_{m}\mathrm{f}_m^{*}}\left(\mathrm{u}_m^k + 
 \sigma_{m}\left(
  \mathrm{a}_m^{\top}\mathrm{x}^k-\sigma_{0}\left(\sum_{i=1}^{m-1}
      \mathrm{a}_m^{\top}\mathrm{a}_i\mathrm{v}_i^k
  +\|\mathrm{a}_m\|^2\mathrm{u}_m^k\right)\right)\right)\\[1mm]
 \mathrm{x}^{k+1}= 
 \mathrm{x}^k-{\lambda}\sum_{i=1}^m
  \mathrm{a}_i\mathrm{v}_i^k\\
    \mathrm{u}_1^{k+1}=\mathrm{u}_1^k+\frac{\lambda}{\sigma_1}\left(
    \mathrm{v}_1^k-\mathrm{u}_1^k\right)\\
\hspace{0.8cm}     \vdots\\
    \mathrm{u}_m^{k+1}=\mathrm{u}_m^k+\frac{\lambda}{\sigma_m}\left(
    \mathrm{v}_m^k-\mathrm{u}_m^k\right)+
\sigma_0\sum_{j=1}^{m-1}\mathrm{a}_m^{\top}\mathrm{a}_j(\mathrm{v}_j^k
-\mathrm{u}_j^k).        
 \end{array}
 \right.\\[2mm]
 \end{array}
  \end{equation}
  Since conditions \eqref{e:condiPii}-\eqref{e:metricconditionpd}
  hold if 
  \begin{equation}
  \label{e:condERM1}
  \sqrt{\sum_{i=1}^m\|\mathrm{a}_i\|^2}+\sigma_0\sum_{i=1}^m
  \|\mathrm{a}_i\|^2+\frac{\sigma_0}{2}
  \left(\max_{i=1,\ldots,m}\|\mathrm{a}_i\|^2-
  \min_{i=1,\ldots,m}\|\mathrm{a}_i\|^2\right)<
  \frac{1}{\max\limits_{i=0,\ldots,m}\sigma_i},
  \end{equation}
  by choosing $(\sigma_i)_{0\leq i\leq m}$ satisfying 
  \eqref{e:condERM1}
  the sequence $(\mathrm{x}^k)_{k\in\NN}$ generated by 
  \eqref{e:algomicompERM3} converges to a solution provided that
  $\lambda<M^{-1}$ where 
  $$M=\left(\min_{i=0,\ldots,m}\sigma_i\right)^{-1}+\frac{1}{2}
  \sqrt{\sum_{i=1}^m\|\mathrm{a}_i\|^2}+\frac{\sigma_0}{2}\left(\sum_{i=1}^m
  \|\mathrm{a}_i\|^2+\max_{i=1,\ldots,m}\|\mathrm{a}_i\|^2\right).$$
 Note that, without loss of generality, we can assume, for every 
 $i\in\{1,\ldots,m\}$, $\|\mathrm{a}_i\|=1$, since 
 $\mathrm{f}_i(\mathrm{a}_i^{\top}\mathrm{x})=
 \mathrm{g}_i((\mathrm{a}_i/\|\mathrm{a}_i\|)^{\top}\mathrm{x})$
 with $\mathrm{g}_i\colon \mathrm{x}\mapsto 
 \mathrm{f}_i(\|\mathrm{a}_i\|\mathrm{x})$ and 
 $\prox_{\mathrm{g}_i}\colon\mathrm{x}\mapsto 
 \prox_{\|\mathrm{a}_i\|^2\mathrm{f}_i}
 (\|\mathrm{a}_i\|\mathrm{x})/\|\mathrm{a}_i\|$. Therefore, condition
 \eqref{e:condERM1} can be reduced to 
 $\sqrt{m}+m\sigma_0<(\max_{i=0,\ldots,m}\sigma_i)^{-1}$, which, in 
 the 
 case $\sigma_0=\cdots=\sigma_m$ reduces to 
 $\sigma_0<(\sqrt{5}-1)/(2\sqrt{m})$.

{
\subsection{A Distributed Operator Splitting Scheme with Time-Varying Networks}

In this section we develop an extension of the popular distributed operator splitting 
scheme \emph{PG-Extra}~\cite{shi2015extra,shi2015proximal} to time-varying graphs. The 
problem data are a collection of cost functions $f_1, \ldots, f_n$ on a Hilbert space $\cH$ 
and a sequence of connected, undirected communication graphs $ G_t = (V_t, E_t)$ with 
vertices $V_t = \{1, \ldots, n\}$ and edges $ E_t \subseteq \{1, \ldots, n \}^2$. Then the goal 
of distributed optimization is to 
\begin{align}\label{eq:logs}
\Min_{x \in \cH} \; \sum_{i =1}^n f_i(x), 
\end{align}
through an iterative algorithm that, at every time $t \in \NN$, only allows communication between neighbors in $G_t$. For simplicity, we focus on the case wherein 
$f_i : \cH \rightarrow \RX$ is proper, lower semicontinuous and convex.  

A well-known distributed operator splitting schemes is known as \emph{PG-Extra}. This method applies to fixed communicated graphs $G_t \equiv G$, and can be viewed as an instance of modern primal-dual algorithms, such as Condat-Vu~\cite{condat2013primal,vu2013splitting}. To the best of our knowledge there is no known extension of \emph{PG-Extra} to time-varying graphs that may also be applied to monotone inclusions. We will now develop such an extension.

For the graph $G_t$, let $A_t$ denote its adjacency matrix and let $D_t$ denote its degree matrix.\footnote{Briefly, $(A_t)_{ij} = 1$ if $(i,j) \in E$ and is zero otherwise, while $D$ is a diagonal matrix with diagonal entries $D_{ii} = \text{deg}(i)$.} The Laplacian matrix of $G_t$ is defined as the difference
$$
L_t := D_t - A_t.
$$
It is well-known that, for fully connected graphs, we have the identity $\ker(L_t) = \mathrm{span}(\mathbf{1}_n)$~\cite{chung1997spectral}. We may exploit this fact to develop an equivalent formulation of~\eqref{eq:logs}.  

The Laplacian operator has a natural extension to the product space $\cH^n$. It is then a straightforward exercise to show that the extension induces the following identity: 
\begin{align*}
\left(\forall \mathbf{x} := (x_1, \ldots, x_n) \in \cH^n\right)   &&L_t\mathbf{x} = 0 \iff x_1 = x_2 = \cdots = x_n.
\end{align*}
Therefore, a family of equivalent formulations of~\eqref{eq:logs} is given by 
\begin{align*}
\Min_{\mathbf{x} \in \cH^n} &\; \sum_{i \in V} f_i(x_i) \\
\text{subject to:} & \;  L_t \mathbf{x} = 0. \numberthis\label{eq:new_reform}
\end{align*}
The constraint $L_t\mathbf{x} = 0$ is equivalent to the constraint $x \in \cU := \{x \in \cH^n 
\mid x_1 = \ldots = x_n\}$. Thus, one could apply a splitting method to derive a distributed 
algorithm consisting of decoupled proximal steps on the $f_i$ followed by \emph{global 
averaging} steps induced by the projection onto $\cU$. However, in order to develop an 
algorithm that respects the \emph{local communication structure} of the graphs $G_t$, we 
must avoid computing such projections onto $\cU$. For any fixed $t$, we may develop 
such a method as a special case of modern primal-dual algorithms.

Indeed, a straightforward application of Condat-Vu~\cite{condat2013primal,vu2013splitting} yields the update rule
\begin{align*}
\text{For all $i \in V$} & \text{ in parallel} \\
x_i^{k+1} &= \prox_{\gamma f_i}(x_i^{k} -\gamma (L_t\mathbf{y}^k)_i)\\
\mathbf{y}^{k+1} &= \mathbf{y}^{k} + \tau L_t (2\mathbf{x}^{k+1} - \mathbf{x}^k), \numberthis\label{eq:condat_vu_decentralized}
\end{align*}
where $\gamma, \tau > 0$ are appropriately chosen stepsizes. This algorithm is fully decentralized because multiplications by $L_t$ only induce communication among neighbors in the graph $G_t$. 

If we allow $t = k$, this Condat-Vu~\cite{condat2013primal,vu2013splitting} algorithm has, to the best of our knoweldge, no supporting convergence theory, although each of the optimization problems~\eqref{eq:new_reform} have the same set of solutions. The lack of convergence theory arises because Condat-Vu measures convergence in the product space $(\cH^n \times \cH^n, \|\cdot\|_{P_t})$, where $P_t$ is a metric inducing linear transformation depending on $L_t$:
\begin{align*}
P_t  &:= \begin{bmatrix} 
\frac{1}{\gamma} \Id & -L_t \\
-L_t & \frac{1}{\tau} \Id 
\end{bmatrix}.
\end{align*}
One may hope to apply standard variable metric operator-splitting 
schemes~\cite{combettes2012variable,vu2013variableFBF}, but the compatibility condition 
cannot hope to be satisfied. Thus, instead of Condat-Vu, we apply the variable metric 
technique developed in this manuscript.

Mathematically, we let 
\begin{align*}
\cS_t : \cH^n \times \cH^n & \rightarrow \cH^{n} \times \cH^n \\
(\mathbf{x}, \mathbf{y}) &\mapsto ( (\prox_{\gamma f_i}(x_i^{k} - \gamma (L_ty^k)_i))_{i=1}^n , \mathbf{y}^{k} + \tau L_t (2\mathbf{x}^{k+1} - \mathbf{x}^k)).
\end{align*}
Given a proper choice of $\gamma$ and $\tau$, the results of~\cite{condat2013primal,vu2013splitting} show that $\cS_t$ is of $\mathfrak{T}$-class in the space $(\cH^n \times \cH^n, \|\cdot \|_{P_t})$ (indeed, $\cS_t$ is a resolvent). Thus, for any $0 < \mu \leq \|P_t\|^{-1}$, Proposition~\ref{prop:classT} implies that 
$$
\cQ_t = \Id - \mu P_t(\Id - \cS_t), 
$$
is of $\mathfrak{T}$-class in the space $(\cH^n \times \cH^n, \|\cdot \|)$ and $\Fix(\cQ_t) = \Fix(\cS_t)$. Like $\cS_t$, the operator $\cQ_t$ may be computed in a decentralized fashion, as communication between agents is only induced through multiplications by $L_t$.

The algorithm resulting from applying $Q_t$ is a time-varying distributed operator-splitting scheme:
\begin{align*}
(\mathbf{x}^{k+1}, \mathbf{y}^{k+1}) = \cQ_k (\mathbf{x}^{k}, \mathbf{y}^{k}).
\end{align*}
The convergence of this iteration may be proved using an argument similar to Theorem~\ref{cor:asymmetricnoinversion} (which does not capture the case in which the operator at hand is varying). To prove convergence of this iteration, one must observe that the $\Fix(\cQ_k)$ is constant, that for all $(\mathbf{x}^\ast, \mathbf{y}^\ast) \in \Fix(Q_k)$ the sequence $\|((\mathbf{x}^{k}, \mathbf{y}^{k}) - (\mathbf{x}^\ast, \mathbf{y}^\ast)\|$ is nonincreasing, and that $\sum_{k=0}^\infty \| (\mathbf{x}^{k+1}, \mathbf{y}^{k+1}) - (\mathbf{x}^{k}, \mathbf{y}^{k})\|^2 < \infty $. A standard argument then shows that $(\mathbf{x}^{k}, \mathbf{y}^{k})$ converges to an element of $\Fix(\cQ_k) \equiv \Fix(\cQ_0)$.

\subsection{Nonlinear constrained optimization problems}
\label{sec:NLMP}
In this application we aim at solving the nonlinear constrained optimization problem
\begin{equation}
\label{e:nonlMP}
\Min_{x\in C}{f(x)+h(x)},
\end{equation}
where $C=\menge{x\in\HH}{(\forall i\in\{1,\ldots,p\})\quad g_i(x)\le 0}$, $f\colon\HH\to\RX$
is lower semicontinuous, convex and proper, for every $i\in\{1,\ldots,p\}$, 
$g_i\colon\dom(g_i)\subset \HH\to\RR$ and $h\colon\HH\to\RR$ are $\mathcal{C}^1$
convex functions in $\inte\dom g_i$ and $\HH$, respectively, and $\nabla h$ is 
$\beta^{-1}-$Lipschitz. A 
solution of the optimization 
problem \eqref{e:nonlMP}
can be found via the saddle points of the Lagrangian
\begin{equation}
L(x,u)=f(x)+h(x)+u^{\top}{g(x)}-\iota_{\RR^p_+}(u),
\end{equation}
which, under standard qualification conditions can be found by solving the monotone 
inclusion (see \cite{rockafellar1970saddle})
\begin{equation}
{\rm find}\quad x\in Y\quad \text{ such that }\quad (\exi u\in\RR_+^p)\quad  (0,0)\in 
A(x,u)+B_1(x,u)+B_2(x,u),
\end{equation}
where $Y\subset\HH$ is a nonempty closed convex set modeling apriori information on the 
solution (eventually we can take $Y=\HH$), $A\colon (x,u)\mapsto \partial f(x)\times 
N_{\RR^p_+}u$ is maximally 
monotone, $B_1\colon (x,u)\mapsto (\nabla h(x),0)$ is $\beta-$cocoercive, and
$$B_2\colon (x,u)\mapsto \left(\sum_{i=1}^pu_i\nabla g_i(x),-g_1(x),\ldots,-g_p(x)\right)$$ 
is nonlinear, 
monotone and continuous 
\cite{rockafellar1970saddle}. If $Y\subset \dom\partial f\subset\cap_{i=1}^p\inte\dom g_i$
we have that $X:=Y\times\RR_+^p\subset \dom A=\dom\partial f\times \RR_+^p\subset 
\dom B_2=\cap_{i=1}^p\inte\dom g_i\times\RR^p$ and, from 
\cite[Corollary~25.5]{bauschke2017convex},
we have that $A+B_2$ is maximally monotone.
The method proposed in Theorem~\ref{t:1} reduces to
 \begin{equation}
\label{e:algonlc}
(\forall k\in\NN)\quad 
\begin{array}{l}
\left\lfloor
\begin{array}{l}
y^k=\prox_{\gamma_k f}\left(x^k-\gamma_k(\nabla h(x^k)+\sum_{i=1}^pu_i^k\nabla 
g_i(x^k))\right)\\[0.5mm]
\text{For every } i=1,\ldots,p\\[0.5mm]
\left\lfloor
\begin{array}{l}
\eta_i^k=\max\left\{0,u_i^k+\gamma_kg_i(x^k)\right\}\\[0.5mm]
u_i^{k+1}=\max\left\{0,\eta_i^k-\gamma_k(g_i(x^k)-g_i(y^k))\right\}\\[0.5mm]
\end{array}
\right.\\[4mm]
x^{k+1}=P_Y\left(y^k+\gamma_k\sum_{i=1}^p(u_i^k\nabla 
g_i(x^k)-\eta_i^k\nabla 
g_i(y^k))\right),
\end{array}
\right.
\end{array}
\end{equation}
where, for every $k\in\NN$, $\gamma_k$ is found by the backtracking procedure 
defined in \eqref{e:armijocond}. Note that, since $B_2$ is nonlinear, the approaches 
proposed in 
\cite{condat2013primal,vu2013splitting} cannot be applied to this instance.

In the particular instance when $f=\iota_{\Omega}$ for 
some
nonempty closed convex set $\Omega$, we can choose, among other options,  
$Y=\Omega$ since we know 
that any solution must belong to $\Omega$. 
Moroever, when, for every $i\in\{1,\ldots,p\}$, $g_i\colon x\mapsto d_i^{\top}x$, where
$d_i\in\RR^N$, we have $B_2\colon (x,u)\mapsto (D^{\top}u,-Dx)$, where 
$D=[d_1,\ldots,d_p]^{\top}$. This is a particular instance of problem 
\eqref{eq:primal_before_PD} and $B_2$ is $\|D\|-$Lipschitz in this case, which allows us to 
use constant stepsizes $\gamma_k=\gamma\in]0,\chi[$, where $\chi$ is defined in 
\eqref{e:chi} and $L=\|D\|$. 
Theorem~\ref{t:1} guarantees the convergence of the 
iterates $\{x^k\}_{k\in\NN}$ thus generated to a solution to \eqref{e:nonlMP} in any case.

In the next section, we explore some numerical results showing the good performance 
of this method and the method with constant step-size when $g_i$ are affine linear.
}
{
\section{Numerical simulations}
\label{sec:7}
In this section we provide two instances of Section~\ref{sec:NLMP} and we compare 
our proposed method with available algorithms in the literature.

\subsection{Optimization with linear inequalities}
In the context of problem~\eqref{e:nonlMP}, suppose that $\HH=\RR^N$, $h\colon 
x\mapsto \|Ax-b\|^2/2$, $A$ is a $m\times N$ real matrix with $N=2m$ and $b\in\RR^m$, 
$f=\iota_{[0,1]^N}$, and 
$$(\forall i\in\{1,\ldots,p\})\quad g_i(x)=d_i^{\top}x,$$
where $d_1,\ldots,d_p\in\RR^N$. In this case, $B_1\colon (x,u)\mapsto (A^{\top}(Ax-b),0)$,
$B_2\colon (x,u)\mapsto (D^{\top}u,-Dx)$, where $D=[d_1,\ldots, d_p]^{\top}$,
$\beta=\|A\|^{-2}$ and $L=\|D\|$.
We compare the 
method proposed in \eqref{e:algonlc} using the line search (FBHF-LS),
the version with constant stepsize (FBHF), the method proposed by
Condat and V\~u \cite{condat2013primal,vu2013splitting} (CV), the method proposed by
Tseng \cite{tseng2000modified} with line search (Tseng-LS) and with constant stepsize 
(Tseng) for randomly generated matrices and vectors $A$, $D$ and $b$.
We choose the same starting point for each method 
and the parameters for the line search for Tseng-LS and FBHF-LS are $\theta=0.316$, 
$\varepsilon=0.88$ and $\sigma=0.9$. For the constant stepsizes versions of Tseng and 
FBHF, we use $\gamma=\delta/(\beta^{-1}+L)$ and 
$\gamma=\delta\beta/(1+\sqrt{1+16\beta^2L^2})$, respectively, and for $\bar{\sigma}>0$ we 
select $\tau=1/(1/2\beta+\bar{\sigma} L^2)$
in order to satisfy 
the convergence conditions on the parameters of each algorithm. 
We choose several values of $\delta$ and $\bar{\sigma}$ for studying the behavior and
we use the stopping criterion  
$\|(x_{k+1}-x_k,u_{k+1}-u_k)\|/\|(x_k,u_k)\|<10^{-7}$.
In Table~\ref{tab:0} we show the performance of the five algorithms
for random matrices $A$ and $D$ and a random vector $b$ with $N=2000$ and 
$p=100$ and a selection of the parameters $\sigma,\delta$. We see that for Tseng and 
FBHF
the performance improve for larger choices of $\delta$, while for CV it is not clear how to 
choose $\bar{\sigma}$ in general. Even if the theoretical bound of FBHF does not permit 
$\delta$ to go beyond $4$, for $\delta=4.4$ the convergence is also obtained for this case 
with a better performance.
We suspect that the particular structure of this particular case can be exploited for 
obtaining a 
better bound. We also observe that the best performance in time is obtained for the lowest 
number of iterations for each method. In addition, for this instance, algorithms 
FBHF, CV and FBHF-LS are comparable in computational time, while the algorithms by 
Tseng \cite{tseng2000modified} are considerably less efficient in time and in number of 
iterations. In Table~\ref{tab:01} we compare the average time and iterations that the more 
efficient methods in the first simulation take to achieve the stop criterion 
($\epsilon=10^{-7}$) for $20$ random realizations of matrices $A$ and $D$ and a random 
vector $b$, with $N=2000$ and 
$p=100$. We use the parameters yielding the best performance of each method in the first 
simulation. For FBHF we also explore the case when 
$\delta=4.7$, which gives the best performance. We also observe that FBHF for 
$\delta=3.999$ is comparable with CV in average time, while FBHF-LS is slower for this 
instance.
 
\begin{table}[]
	\centering
	\resizebox{\textwidth}{!}{%
		\begin{tabular}{|l||l|l|l||l|l|l||l|l|l|l||l||l|}
			\hline
			$\epsilon=10^{-7}$         & \multicolumn{3}{c||}{Tseng}  & 
			\multicolumn{3}{c||}{FBHF}   
			& \multicolumn{4}{c||}{CV}               & Tseng-LS & FBHF-LS \\ \hline
			$\delta$,$\bar{\sigma}$ & 0.8     & 0.9     & 0.99       & 3.2     & 3.99     & 4.4     & 
			0.0125  & 
			0.0031  & 0.0008  & 0.0002  & LS       & LS      \\ \hline
			$h(x^*)$           & 158.685 & 158.684 & 158.684 & 158.681 & 158.680 & 158.679 & 
			158.674 & 
			158.674 & 158.676 & 158.687 & 158.683  & 158.680 \\ \hline
			iter.       & 20564   & 18482   & 16791   & 11006   & 8915   & 8243    & 9384    & 
			9158    & 8516    & 13375   & 14442    & 10068   \\ \hline
			time (s)    & 41.55   & 37.18   & 33.76   & 13.11   & 10.48   & 9.76    & 10.70   & 10.61   
			& 9.67    & 15.27   & 94.86    & 12.40   \\ \hline
		\end{tabular}%
	}
\vspace{0.2cm}
	\caption{Comparison of Tseng, FBHF and CV (with different values of $\delta$ and 
	$\bar{\sigma}$),
	Tseng-LS and FBHF-LS for a stop criterion of $\epsilon=10^{-7}$.}
	\label{tab:0}
\end{table}

\begin{table}[]
	\centering
	\begin{tabular}{|l|l|l|}
		\hline
		$\epsilon=10^{-7}$ & av. iter.   & av. time (s) \\ \hline
		FBHF-LS          & 36225 & 43.96  \\ \hline
		FBHF ($\delta=3.999$) & 32563 & 39.07 \\ \hline
		FBHF ($\delta=4.7$)   & 28364 & 34.14  \\ \hline
		CV ($\sigma=0.0008$)  & 33308 & 38.60  \\ \hline
	\end{tabular}
	\caption{Average performance of the more efficient methods for $20$ random 
	realizations of $A$, $D$ and $b$ with $N=2000$ and $p=100$.}
	\label{tab:01}
\end{table}

\subsection{Entropy constrained optimization}
\label{sec:71}
In the context of problem~\eqref{e:nonlMP}, suppose that $\HH=\RR^N$, $h\colon 
x\mapsto x^{\top}Qx-d^{\top}x+c$, $Q$ is a $N\times N$ semidefinite positive real 
matrix, $b\in\RR^N$, $c\in\RR$, $f=\iota_{\Omega}$,
$\Omega$ is a closed convex subset of $\RR^N$,
$p=1$, and $$g_1\colon\RR^N_+\to\RR\colon  x\mapsto 
\sum_{i=1}^Nx_i\left(\ln\left(\frac{x_i}{a_i}\right)-1\right)-r,
$$
where $-\sum_{i=1}^Na_i<r<0$, $a\in\RR^N_{++}$ and we use the convention $0\ln(0)=0$. 
This problem appears 
in robust least squares estimation when a relative entropy constraint is included 
\cite{levy04}.
This constraint can be seen as a distance constraint with respect to the vector $a$, where
the distance is measured by the Kullback-Leibler divergence \cite{basseville13}.

In our numerical experience, we assume $Q=A^{\top}A$, $d=A^{\top}b$ and $c=\|b\|^2/2$,
where $A$ is a $m\times N$ real matrix with $N=2m$ and $b\in\RR^m$, which yields 
$h\colon x\mapsto \|Ax-b\|^2/2$, $\beta=\|A\|^{-2}$, $\Omega=[0.001,1]^N$, and 
$a=(1,\ldots,1)^{\top}$.
In this context, $g_1$ achieves its minimum in $\bar{x}=(1,\ldots,1)^{\top}$ and 
$g_1(\bar{x})=-N$
and we choose $r\in]-N,0[$.
Since the constraint is not linear, we cannot 
use the methods proposed in \cite{vu2013splitting,condat2013primal}. We compare the 
method proposed in \eqref{e:algonlc} with line search (FBHF-LS) with the Tseng's method 
with linesearch 
\cite{tseng2000modified} (Tseng-LS) and two routines in matlab: {\tt fmincon.interior-point} 
(FIP) and 
{\tt fmincon.sqp} (SQP). For $m=100, 200, 300$, we generate $20$ random matrices $A$
and random vectors $b$ and we compare the previous methods by changing 
$r\in\{-0.2N,-0.4N,-0.6N,-0.8N\}$
in order to vary the feasible regions. We choose the same starting point for each method 
and the parameters for the line search for Tseng-LS and FBHF-LS are $\theta=0.707$, 
$\varepsilon=0.88$ and $\sigma=0.9$.
The stopping criterion is 
$\|(x_{k+1}-x_k,u_{k+1}-u_k)\|/\|(x_k,u_k)\|<\epsilon$ with $\epsilon=10^{-11}$. In 
Table~\ref{tab:1} we show,
for $m=300$,
the value of the objective function $h$, the nonlinear constraint $g_1$ and time for 
achieving the stopping criterion for a fixed random matrix $A$ and vector $b$
by moving $r\in\{-0.2N,-0.4N,-0.6N,-0.8N\}$. We observe that all methods achieve
almost the same value of the objective function and satisfy the constraints, but
in time FBHF-LS obtains the best performance, even if the number of iterations are larger 
than
that of FIP and SQP. Tseng-LS has also a better performance in time than FIP and SQP, with 
a much larger number of iterations. We also observe that, the smaller the feasible 
set is, the harder is for all the methods to approximate the solution and the only case 
when the constraint is inactive is when $r=-0.2N$. On the other hand, even if in the cases 
$r=-0.6N$ and $r=-0.8N$ we have $g_1(x^*)>0$, the value is $\approx 10^{-6}$ which is 
very near to feasibility.
This behavior is confirmed in Table~\ref{tab:2}, in which we show,
for each $m\in\{100,200,300\}$, the average time and iterations 
obtained from the $20$ random realizations by moving 
$r\in\{-0.2N,-0.4N,-0.6N,-0.8N\}$. We observe that FBHF-LS takes considerably less time
than the other algorithms to reach the stopping criterion and the difference is more when 
dimension is higher. 
Since FIP and SQP are very slow for high dimensions, in Table~\ref{tab:3}
we compare the efficiency of Tseng-LS and FBHF-LS for $20$ random realizations
of $A$ and $b$ with $N\in\{1000,2000,3000\}$ for $r=-0.4N$ and $\epsilon=10^{-5}$.
The computational time of both methods are reasonable, but again FBHF-LS is faster.
FBHF-LS use less iterations than Tseng-LS for achieving the same criterion and, even if
we reduce $\epsilon$ from $10^{-5}$ to $10^{-10}$ and the number of iterations are more 
than 3 times that of Tseng-LS for the weaker criterion, the computational time is similar.
We also observe that the percentage of relative improvement of an algorithm $A$ with 
respect to Tseng-LS,
measured via $\% imp.(A)=100*(f(x_A)-f(x_T))/f(x_T)$, where $x_T$ and $x_A$ are the 
approximative solutions obtained by Tseng-LS and $A$, is bigger for smaller dimensions.
For instance, in the case $500\times 1000$, FBHF-LS obtain an approximative solution 
for which the objective function has a $12\%$ of relative improvement with respect to that 
of 
Tseng-LS for $\epsilon=10^{-5}$ and, if the criterion is strengthened to $10^{-10}$, the 
improvement raises to $20\%$. For higher dimensions, this quantities are considerably 
reduced.

\begin{table}[]
	\centering
	\resizebox{\textwidth}{!}{%
		\begin{tabular}{|l||l|l|l|l||l|l|l|l||l|l|l|l||l|l|l|l|}
			\hline
			& \multicolumn{4}{c||}{$r=-0.2N$}           & 
			\multicolumn{4}{c||}{$r=-0.4N$}           & \multicolumn{4}{c||}{$r=-0.6N$}           & 
			\multicolumn{4}{c|}{$r=-0.8N$}           \\ \hline
			
			$300\times600$ & $h(x^*)$   & $g_1(x^*)$ & time (s) &iter. & $h(x^*)$  & $g_1(x^*)$ 
			& 
			time (s) & iter.& $h(x^*)$  & $g_1(x^*)$ & time (s) &iter.& $h(x^*)$ & $g_1(x^*)$ & 
			time (s)&iter. \\ \hline
			FIP          & 6.06E-10  & -105.038& 165.190  & 562  & 3.73E-09  & -7.78E-02    & 
			183.467  & 574   & 244.551 & -3.39E-08    & 218.413  & 794   & 4075.6824 & 
			-3.20E-09    & 378.269    & 1197  \\
			SQP          & 6.06E-10  & -105.038 & 372.210  & 357  & 3.73E-09  & -7.78E-02    & 
			598.258  & 341   & 244.551 & -3.39E-08    & 515.568  & 653   & 4075.6824 & 
			-3.20E-09    & 988.143    & 655   \\
			Tseng-LS     & 5.47E-15  & -119.365 & 13.682   & 13785 & 1.41E-14  & -9.81E-09    & 
			29.160   & 30110 & 244.551 & 1.16E-06     & 44.248   & 20717 & 4075.6822  & 
			2.67E-06     & 110.916    & 75254 \\
			FBHF-LS        & 2.15E-15  & -119.338 & 1.053    & 9680  & 5.56E-15  & -4.71E-09    & 
			2.220    & 21106 & 244.551 & 1.10E-06     & 10.381   & 19492 & 4075.6822 & 
			2.07E-06     & 17.464     & 60442\\
			 \hline
		\end{tabular}%
	}
	\vspace{.2cm}
	\caption{Comparison of objective function and constraints values, time and number of 
	iterations of FIP, SQP, Tseng-LS and FBHF-LS algorithms for solving the entropy 
	constrained 
		optimization when $N=600$, $m=300$ and $r\in 
		\{-0.2N,-0.4N,-0.6N,-0.8N\}$.}
		\label{tab:1}
\end{table}

\begin{table}[]
	\centering
	\resizebox{\textwidth}{!}{%
		\begin{tabular}{|l||l|l|l|l||l|l|l|l||l|l|l|l|}
			\hline
			Time (s) & \multicolumn{4}{c||}{$100\times 200$}          & 
			\multicolumn{4}{c||}{$200\times 
			400$}          
			& 
			\multicolumn{4}{c|}{$300\times 600$}          \\ \hline
			constraint & $r=-0.2N$ & $r=-0.4N$ & $r=-0.6N$ & $r=-0.8N$ & $r=-0.2N$ & 
			$r=-0.4N$ & $r=-0.6N$ 
			& $r=-0.8N$ & $r=-0.2N$ & $r=-0.4N$ & $r=-0.6N$ & $r=-0.8N$ \\ \hline
			FIP        & 8.24    & 9.50    & 11.92   & 11.22   & 52.95   & 57.92   & 75.02   & 76.29   & 
			142.51  & 183.22  & 253.80  & 324.42  \\ 
			SQP        & 8.60    & 11.18   & 14.28   & 18.51   & 70.88   & 98.70   & 122.03  & 209.73  
			& 313.52  & 489.95  & 569.34  & 1075.42 \\
			Tseng-LS   & 22.92   & 10.71   & 7.92    & 9.91    & 81.16   & 13.46   & 39.17   & 83.01   
			& 139.47  & 26.50   & 84.07   & 111.44  \\ 
			FBHF-LS    & 2.21    & 0.99    & 2.72    & 2.51    & 7.06    & 0.95    & 10.23   & 18.09   
			& 12.48   & 1.88    & 20.59   & 18.30   \\ \hline
		\end{tabular}%
	}
	\vspace{.2cm}
	\caption{Average time (s) to reach the stopping criterion of $20$ random realizations for 
	FIP, 
	SQP, Tseng-LS and FBHF-LS for a matrix $A$ with
	dimension $100\times 200$, $200\times 400$ and $300\times 600$ and $r\in 
	\{-0.2N,-0.4N,-0.6N,-0.8N\}$.}
	\label{tab:2}
\end{table}

\begin{table}[]
	\centering
	\resizebox{\textwidth}{!}{%
	\begin{tabular}{|l|l||l|l|l||l|l|l||l|l|l|}
		\hline
		&           & \multicolumn{3}{c||}{$500\times1000$}  & 
		\multicolumn{3}{c||}{$1000\times2000$} & \multicolumn{3}{c|}{$1500\times3000$} \\ 
		\hline
		stop crit.                                                                & Algorithm & time (s) & iter. & \% 
		imp. & time (s) & iter. & \% imp. & time (s) & iter. & \% imp. \\ \hline
		\multicolumn{1}{|c|}{\multirow{2}{*}{$10^{-5}$}} & Tseng-LS  & 17.22    & 2704  & 
		0       & 52.80    & 4027  & 0       & 91.23    & 3349  & 0      \\ 
		\multicolumn{1}{|c|}{}                                                    & FBHF-LS   & 2.33     & 1993  & 
		12.2       & 6.26     & 3239  & 3.2       & 9.56     & 2474  & 0.1       \\ \hline
		\multicolumn{1}{|c|}{$10^{-10}$}                                     & FBHF-LS   & 10.67    & 10092 
		& 
		20.1       & 71.51    
		& 
		33637 & 6.5       & 53.47    & 12481 & 0.2       \\ \hline
	\end{tabular}%
}
	\vspace{.2cm}
\caption{Comparison between Tseng-LS and FBHF-LS for higher dimensions. We compare 
average time and average number of iterations for achieving stop criteria together with 
percentage of relative improvement with respect to Tseng-LS approximate solution.}
\label{tab:3}
\end{table}
}

\section{Conclusion}
In this paper, we systematically investigated a new extension of Tseng's 
forward-backward-forward method and the forward-backward method. The three primary 
contributions of this investigation are (1) a lower per-iteration complexity variant of 
Tseng's method which activates the cocoercive operator only once; (2) the ability to 
incorporate variable metrics in operator-splitting 
schemes, which, unlike typical variable metric methods, do not enforce 
compatibility conditions between metrics employed at successive time steps; and (3) 
the ability to incorporate modified resolvents $J_{P^{-1} A}$ in iterative fixed-point 
algorithms, which, unlike typical preconditioned fixed point iterations, can be formed 
from non self-adjoint linear operators $P$, which lead to new 
Gauss-Seidel style operator-splitting schemes.
\vspace{.3cm}

{\footnotesize {\bf Acknowledgments:} This work is partially supported by NSF GRFP grant 
	DGE-0707424, 
	by CONICYT grant FONDECYT 11140360, and by ``Programa de financiamiento 
	basal'' from CMM, Universidad de Chile. We want to thank the two anonymous reviewers, 
	whose comments and concerns allowed us to improve the quality of this manuscript.}

\bibliographystyle{siam}
\bibliography{bibliographyalm}

\end{document}